\newtheorem{theorem}{Theorem}
\newtheorem{lemma}[theorem]{Lemma}
\newtheorem{conjecture}{Conjecture}
\newtheorem*{dilemma}{Distance Lemma}
\newenvironment{denseitems}{\list{$\bullet$}%
  {\labelwidth3em\itemsep0pt\parsep0pt\topsep0.6ex}}{\endlist}
\newcommand{\R}{{\mathbb{R}}}
\newcommand{\N}{{\mathbb{N}}}
\newcommand{\s}{^\ast}
\newcommand{\Sp}{\mathfrak{S}}
\newcommand{\F}{{\mathfrak{F}}}
\newcommand{\G}{{\mathfrak{G}}}
\newcommand{\K}{{\mathfrak{K}}}
\newcommand{\C}{{\mathcal{C}}}
\newcommand{\cone}{\mathfrak{C}}
\newcommand{\ello}{\ell^{\perp}}
\newcommand{\sigmao}{\sigma^{\perp}}
\newcommand{\pth}[1]{\left( #1 \right)}
\newcommand{\eps}{\varepsilon}
\newcommand{\vc}[1]{\overrightarrow{#1}}
\def\p #1.{\mathit{#1}}
\let\geq\geqslant
\let\leq\leqslant
\let\ge\geqslant
\let\le\leqslant
\renewcommand{\showkeyslabelformat}[1]{\normalfont\tiny\ttfamily#1}
\def\section{\@startsection {section}{1}{\z@}{-3.5ex plus -1ex minus
    -.2ex}{2.3ex plus .2ex}{\large\bf}}
\def\subsection{\@startsection{subsection}{2}{\z@}{-3.25ex plus -1ex
    minus -.2ex}{1.5ex plus .2ex}{\normalsize\bf}}
\def\@fnsymbol#1{\ensuremath{\ifcase#1\or *\or 1\or 2\or 3\or 4\or
    5\or 6\or 7 \or 8\ or 9 \or 10\or 11 \else\@ctrerr\fi}}
\title{Geometric Permutations of Non-Overlapping Unit Balls Revisited%
  \thanks{This research was supported in part by NRF grant
    2011-0030044 (SRC-GAIA), and in part by NRF grant~2011-0016434,
    both funded by the government of Korea.}}
\author{Jae-Soon Ha%
  \thanks{KAIST, Korea, Email: jaesoonha@kaist.ac.kr,
    otfried@kaist.edu}
  \and
  Otfried Cheong\footnotemark[2]
  \and
  Xavier Goaoc%
  \thanks{Universit\'{e} Paris-Est Marne-la-Vall\'{e}e, France, Email:
    goaoc@univ-mlv.fr}
  \and
  Jungwoo Yang%
  \thanks{Aarhus University, Denmark, Email: jungwoo@madalgo.au.dk}
}
\begin{document}
\maketitle
\begin{abstract}
  Given four congruent balls $A, B, C, D$ in~$\R^{d}$ that have
  disjoint interior and admit a line that intersects them in the
  order~$\p ABCD.$, we show that the distance between the centers of
  consecutive balls is smaller than the distance between the centers
  of~$A$ and~$D$.  This allows us to give a new short proof that $n$
  interior-disjoint congruent balls admit at most three geometric
  permutations, two if $n\ge 7$. We also make a conjecture that would
  imply that $n\geq 4$ such balls admit at most two geometric
  permutations, and show that if the conjecture is false, then there
  is a counter-example of a highly degenerate nature.
\end{abstract}

\section{Introduction}
A \emph{line transversal} to a family $\F$ of pairwise disjoint convex
sets in~ $\R^d$ is a line that intersects every element of that
family. The study of line transversals, their properties, and
conditions for their existence started in the 1950s with the classic
work of Gr\"unbaum, Hadwiger, and Danzer; background about the
sizable literature on \emph{geometric transversal theory} can be
found in the classic survey of Danzer et al.~\cite{dgk-htr-63}, or the
more recent ones by Goodman et al.~\cite{gpw-gtt-93},
Eckhoff~\cite{e-hrctt-93}, Wenger~\cite{wenger1990geometric}, or
Holmsen~\cite{holmsen2008recent}.

An oriented line transversal~$\ell$ to a family~$\F$ induces a linear
order on~$\F$: Fig.~\ref{fig:intro1}(a) shows three oriented transversals
to a family of three congruent disks inducing the orders $A \prec C
\prec B$, $A \prec B \prec C$, and $B \prec A \prec C$.
\begin{figure}[h]
  \centerline{\includegraphics{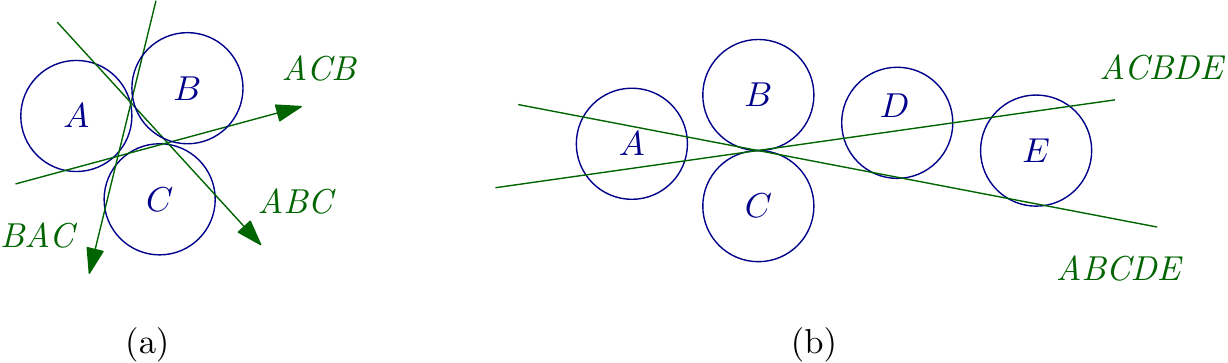}}
  \caption{Orders and geometric permutations\label{fig:intro1}}
\end{figure}
For conciseness, we usually represent the order by the string listing
the elements, the three lines in Fig.~\ref{fig:intro1}(a) induce the
orders $\p ACB.$, $\p ABC.$, and $\p BAC.$. Natural questions in
geometric transversal theory are: Given a family of disjoint convex
objects, how many different orders can be realized by line
transversals?  How much can these orders differ? What becomes of these
questions if the objects have a more restricted shape, for instance if
they are balls or axis-aligned boxes?

If an order can be realized by an oriented line, so can its reverse,
so the two are equivalent in this sense.  The equivalence classes,
that is, pairs of an order and its reverse, are called \emph{geometric
  permutations}.  Fig.~\ref{fig:intro1}(b) shows a set of five
congruent disks with the two geometric permutations~$\p ABCDE.$ and
$\p ACBDE.$, which could equally well be written as~$\p EDCBA.$ and
$\p EDBCA.$. In Fig.~\ref{fig:intro1}(b) the disks~$B$ and~$C$ touch
each other. We allow this, but a line transversal is not allowed to be
tangent to these disks in this common point.  Put differently, we can
remove the common points of contact from the objects to obtain a
family of disjoint convex objects with the same set of line
transversals.  It is convenient to allow such families, as
configurations are often easier to describe when objects touch.  We
will call a family of compact convex objects in~$\R^{d}$ that may
touch, but whose interior is disjoint, a \emph{non-overlapping}
family.

The study of geometric permutations started in the 1980s with the work
by Katchalski et al.~\cite{katchalski1985geometric,
  katchalski1987geometric}. In the plane, $n$ convex objects admit at
most $2n-2$ geometric permutations and this bound is
tight~\cite{edelsbrunner1990maximum}.  One of the intriguing open
questions is the corresponding bound for three and higher dimensions:
$n$ convex objects in~$\R^{d}$ can have $\Omega(n^{d-1})$ geometric
permutations~\cite{smorodinsky2000sharp}, but the best known upper
bound is only $O(n^{2d-3}\log n)$~\cite{Rubin2012}. For balls or similar
\emph{fat} objects, the lower bound of $\Omega(n^{d-1})$ is known to
be tight~\cite{smorodinsky2000sharp,katz2001tight}. Disjoint
\emph{congruent} balls, however, have only a constant number of
geometric permutations: In two dimensions, $n \geq 4$ congruent disks
have at most two geometric
permutations~\cite{smorodinsky2000sharp,asinowski2003geometric}.  In
dimension~$d \geq 3$, Cheong et al.~\cite{cheong2005geometric} proved
that $n$ non-overlapping congruent balls have at most three geometric
permutations, and at most two geometric permutations when~$n\geq 9$.

\bigskip

In this paper we revisit the problem of bounding the number of
geometric permutations of $n$~non-overlapping congruent balls
in~$\R^d$.  Since we can arbitrarily choose the radius of the balls,
we will refer to them as \emph{unit balls}.  The earlier work of
Cheong et al.~\cite{cheong2005geometric} does not entirely settle the
question, as no construction of $n > 3$ non-overlapping unit balls is
known that admits more than two geometric permutations.  Furthermore,
the proof by Cheong et al.\ is quite technical and relies on delicate
geometric lemmas and tedious case analysis.

\bigskip

In the first part of this paper, we give a shorter and greatly
simplified proof that $n \geq 3$ non-overlapping unit balls have at
most three geometric permutations.  Unlike the previous
proof~\cite{cheong2005geometric}, it could be presented in its
entirety in an undergraduate course on transversal theory. Our main
theorem is the following:
\begin{theorem}
  \label{the:main}
  Let $\F$ be a family of $n$ non-overlapping unit balls in $\R^d$.
  The number of geometric permutations of $\F$ is at most three if $n
  \le 6$, and at most two if $n \ge 7$.
\end{theorem}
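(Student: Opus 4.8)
The plan is to derive everything from the Distance Lemma by focusing on the pair of balls whose centers are farthest apart. Write $d(X,Y)$ for the distance between the centers of $X,Y\in\F$, and fix $A,D\in\F$ with $d(A,D)$ maximum (assume for now this maximum is attained uniquely; the degenerate case is discussed below). Orient each geometric permutation so that $A$ precedes $D$. First I would show that at most one ball precedes $A$, at most one ball follows $D$, and these two cases exclude each other. Indeed, if two balls preceded $A$ they would occur together with $A$ and $D$ in an order $P\prec Q\prec A\prec D$, and the Distance Lemma would give $d(A,D)<d(P,D)$, contradicting maximality; symmetrically for two balls following $D$; and if a ball $X$ preceded $A$ while a ball $Y$ followed $D$, the Distance Lemma applied to $X\prec A\prec D\prec Y$ would give $d(A,D)<d(X,Y)$. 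Hence every geometric permutation has, up to reversal, the form $A\,\sigma\,D$ (type~I), $X\,A\,\sigma\,D$ (type~II), or $A\,\sigma\,D\,Y$ (type~III), where $\sigma$ orders the remaining balls.

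Next I would pin down the ``irregular'' balls and rule out most pairs of coexisting orders $\sigma$. If $n\ge 4$, only one ball can ever play the role of $X$: from two type~II permutations $X\,A\,\sigma_1\,D$ and $X'\,A\,\sigma_2\,D$ with $X\ne X'$, the first gives (reading $X\prec A\prec X'\prec D$) that $d(X',D)<d(X,D)$, while the second gives $d(X,D)<d(X',D)$. The same argument fixes $Y$, and a similar four-ball comparison shows that types~II and~III cannot occur simultaneously once $n\ge 4$; so, after possibly exchanging the names $A$ and $D$, every geometric permutation is $A\,\sigma\,D$ or $X\,A\,\sigma\,D$ for one fixed ball $X$. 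The key rigidity statement is that no triple can be reversed between two permutations with a common ``front'' ball: if $S\prec P\prec Q\prec R$ in one permutation while the other has $S\prec R\prec Q\prec P$, the Distance Lemma applied in each yields $d(S,R)<d(S,P)$ and $d(S,P)<d(S,R)$. Since $A$ (or $X$) is always the front ball, this applies everywhere; in particular, once $n\ge 5$ an order $\sigma$ and its reversal cannot both be realized by type~I permutations.

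To conclude I would induct on $n$, the base cases $n\le 3$ being trivial (three elements have at most three linear orders up to reversal). Deleting $A$ and $D$, the orders $\sigma$ coming from type~I permutations are geometric permutations of the subfamily $\F\setminus\{A,D\}$, and by the rigidity statement distinct type~I permutations of $\F$ restrict to distinct ones of $\F\setminus\{A,D\}$ when $n\ge 5$; deleting $A$ likewise controls the type~II permutations. Combining the inductive bound (three in general, two when the subfamily has at least seven members) with the incompatibility of types~II and~III, and being careful about how many type~I permutations can accompany a type~II one, should yield that $\F$ has at most three geometric permutations, and at most two once $n$ is large; tracking the cases in which the count is tight pins the threshold for the third permutation at $n\le 6$.

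The main obstacle is precisely this final bookkeeping: the reductions above lose constant factors, so obtaining the sharp bounds — three in general and two for $n\ge 7$ — needs a more careful simultaneous analysis of a type~II permutation together with the type~I permutations, and an exact determination of which small $n$ admit an extra permutation. A secondary but real difficulty is the treatment of degeneracies: ties in the maximum distance $d(A,D)$, and more generally the equalities that the Distance Lemma forbids only with strict inequality, must be dealt with by a perturbation argument or by re-examining the boundary configurations directly — which is also where the paper's later remark about ``highly degenerate'' near-counterexamples has its roots.
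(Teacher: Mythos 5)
Your farthest-pair decomposition is an attractive alternative to the paper's reduction (Lemma~\ref{lem:ThreeFourGP}), and the individual deductions you make from the Distance Lemma (at most one ball before $A$, at most one after $D$, exclusivity of types II and III, uniqueness of $X$, the no-reversed-triple rigidity) are all correct. But the proposal has a fatal gap in the second half of the theorem, and it is not the ``final bookkeeping'': the Distance Lemma alone cannot yield the bound of two for $n\ge 7$, and your argument uses nothing else. Every step of your proof only ever invokes inequalities of the form ``the outer pair of a realized quadruple is longer than its three consecutive pairs,'' so if three pairwise distinct orders on seven symbols admit a single symmetric function $d$ satisfying all such inequalities simultaneously, no contradiction is derivable by your method. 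Such a triple exists: take the orders $E_1E_2E_3ABCD$, $E_1E_2E_3ACBD$, $E_1E_2E_3ABDC$ and let $d(X,Y)$ be the sum, over the three orders, of the differences of the positions of $X$ and $Y$. A direct check shows every quadruple constraint holds (the tight cases are $d(E_3,B)=7>d(A,C)=6$, $d(E_3,C)=9>d(A,D)=8$, and $d(E_2,C)=12>d(E_3,D)=11$); moreover the unique farthest pair is $\{E_1,D\}$, and the three orders consist of two type-I and one type-III permutation, so your entire framework is satisfied with three distinct geometric permutations present. This abstract $d$ need not come from actual balls, but that is precisely the point: excluding such configurations requires geometric information beyond the Distance Lemma.

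The paper obtains the threshold $n\ge 7$ from exactly such an extra ingredient: Lemma~\ref{lem:angle} shows a transversal makes angle less than $\pi/4$ with the segment joining the first and third centers, which forces two transversals realizing $XYZU$ and $XUYZ$ to meet at an angle strictly between $\pi/4$ and $\pi/2$, and Lemma~\ref{lem:cylinder6} shows the intersection of the two corresponding unit cylinders holds at most six centers at pairwise distance two. The constant $7$ is $6+1$ from this cylinder computation, not the output of an induction. Your observations could plausibly be developed into an alternative proof of the weaker bound of three (they play the role of observations~(i) and~(ii) in Lemma~\ref{lem:ThreeFourGP}), though even there you would still have to settle the four-ball base case by something like the incompatibility-graph analysis of Theorem~\ref{the:at-most-three}; and the degeneracy worries you raise at the end are not real obstacles, since the Distance Lemma's inequalities are strict and a tie in the maximum distance only affects the choice of $A$ and $D$, not the validity of the deductions.
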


\noindent
Theorem~\ref{the:main} slightly improves the previous bound of Cheong
et al.~\cite{cheong2005geometric} by settling the question for $n=7$
and $8$. Our proof rests on the following lemma:

\begin{dilemma}
  If four non-overlapping unit balls $A$, $B$, $C$ and $D$ in $\R^d$
  have a line transversal with the order $ABCD$ then $|ad| >
  \max\{|ab|,|bc|,|cd|\}$.
\end{dilemma}
\noindent
(Here and throughout the paper we will use lower-case letters to
denote the centers of balls written with upper-case letters, so $a$,
$b$, $c$, and $d$ are the centers of $A$, $B$, $C$, and~$D$.)  The
lemma is not as obvious as it might appear: For three unit-balls, for
instance, the existence of a transversal with the order $\p ABC.$ does
not imply that $|ac| > |ab|$, as already evidenced by
Fig.~\ref{fig:intro1}(a).

We prove the distance lemma, in Section~\ref{sec:distance-lemma}, by
first modifying the given configuration into a canonical situation: We
shrink the balls, keeping them congruent, until we reach the smallest
radius for which they still have a transversal with the given order.
This idea has probably been used first by Klee~\cite{klee1954common}
and then by Hadwiger~\cite{hadwiger1957eibereiche}.  The resulting
canonical configuration~$\F$ has the property that the line
transversal~$\ell$ is \emph{pinned} (Lemma~\ref{lem:shrinking}): This
means that any arbitrarily small perturbation of~$\ell$ is no longer a
transversal of~$\F$.  In other words, $\ell$ is an isolated point in
the space of transversals of~$\F$.  The same method for deforming a
family of unit balls such that the line transversal becomes pinned has
been used by Cheong et al.~\cite{cheong2008helly}.  The correctness of
the method is there deduced from algebraic results by Megyesi and
Sottile~\cite{megyesi05} and by Borcea et al.~\cite{borcea2008line}.
This argument requires strict disjointness of the balls, and doesn't
meet our goal of a proof presentable to undergraduates.  We instead
observe that the fact we need is already implicit in a proof by
Holmsen et al.~\cite{holmsen2003helly}.  In Appendix~\ref{sec:holmsen}
we examine their proof to prove the correctness of the pinning method
for non-overlapping unit balls.

Before proving the distance lemma, we show, in
Section~\ref{sec:at-most-three}, that it readily simplifies various
steps of the proof of Cheong et al.~\cite{cheong2005geometric},
resulting in an elementary proof that the number of geometric
permutations of $n$~non-overlapping unit balls is at most three. On
the one hand, the distance lemma simplifies technical derivations.
For example, the fact that the geometric permutations $\p ABCD.$
and~$\p BADC.$ are \emph{incompatible} for non-overlapping unit balls,
that is, they cannot be realized at the same time by a family of four
balls, was given a delicate, five pages long,
proof~\cite[Section~4]{cheong2005geometric}; it follows immediately
from the distance lemma, since $\p ABCD.$ implies that $|ad| > |bc|$
and $\p BADC.$ implies that $|bc| > |ad|$, a contradiction. On the
other hand, using the distance lemma we can replace rather pedestrian
arguments by more conceptual analyses, for instance the mechanical
reduction from $n$ to $4$ balls~\cite[Section~2]{cheong2005geometric}
is done more concisely in Lemma~\ref{lem:ThreeFourGP}.

\bigskip

We conjecture that the geometric permutations $\p ABCD.$ and $\p
ACDB.$ are incompatible.  If proven, this would show that $n\ge 4$
non-overlapping unit balls have at most two geometric permutations,
thereby completely closing this question. In the second part of this
paper, we analyze the geometry of certain pinning configurations and
show that if our conjecture is false then it must admit
counter-examples of a highly contrived nature.

\section{At most three geometric permutations}
\label{sec:at-most-three}

We first use the distance lemma to reduce the problem from $n$~balls
to three or four balls (the same result was obtained by Cheong et
al.~\cite{cheong2005geometric} via a tedious case-analysis):
\begin{lemma}
  \label{lem:ThreeFourGP}
  If $n \geq 4$ non-overlapping unit balls in $\R^d$ have at least $k
  \in \{3,4\}$ geometric permutations, then $k$ of the balls have $k$
  distinct geometric permutations.
\end{lemma}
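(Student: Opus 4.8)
The plan is to argue contrapositively: assuming no $k$ balls carry $k$ distinct geometric permutations, I want to show that the whole family carries fewer than $k$. The natural tool is a projection-type argument. Fix the $k$ geometric permutations $\pi_1,\dots,\pi_k$ realized by transversals $\ell_1,\dots,\ell_k$ of $\F$. The idea is to find a small ``witness'' subfamily $\G\subseteq\F$ of size $k$ on which all $k$ permutations $\pi_i$ already restrict to $k$ \emph{distinct} orders; this would contradict the assumption and finish the proof. So the crux is: why can one always extract such a witness of size exactly $k$?

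First I would recall the classical fact (going back to Katchalski et al.) that two distinct geometric permutations of a family differ in a very controlled way — essentially they agree outside a short ``reversed'' block, or more precisely any two geometric permutations of a family of disjoint convex sets differ by reversing the order of a contiguous sub-block. Using this, for each pair $\pi_i,\pi_j$ I can pick a pair of balls $\{X_{ij},Y_{ij}\}$ whose relative order is opposite in $\pi_i$ and $\pi_j$; such a pair exists since $\pi_i\ne\pi_j$. Naively the union of all these pairs over $1\le i<j\le k$ has size up to $2\binom{k}{2}$, far more than $k$; the work is to compress it down to $k$ balls. For $k=3$ this is already delicate — three permutations, three ``separating pairs'' — and I expect that here the distance lemma does the real work, exactly as the paper advertises it replaces a ``mechanical reduction''.

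Concretely, for $k=3$: take the three transversals realizing $\pi_1,\pi_2,\pi_3$. I would like to name four balls $A,B,C,D$ so that along $\ell_1$ they appear in order $ABCD$, and then use the distance lemma to read off inequalities among $|ab|,|bc|,|cd|,|ad|$; these numeric constraints, being symmetric functions of the centers, are ``seen'' by \emph{every} transversal, so they force strong restrictions on how $\pi_2$ and $\pi_3$ can look on $\{A,B,C,D\}$. The goal is to show the three permutations, restricted to a well-chosen $4$-subset (for $k=3$, even a $3$-subset after further pruning), remain pairwise distinct. The analogous $k=4$ case should then follow by the same bookkeeping, now landing on a $4$-subset directly. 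The main obstacle, as I see it, is the combinatorial pigeonholing that guarantees a single subset of size $k$ simultaneously separates \emph{all} $\binom{k}{2}$ pairs of permutations — it is easy to separate each pair on its own small subset, and the distance lemma is precisely what lets those local witnesses be merged without the subset growing past $k$. Verifying that the inequalities from the distance lemma are strong enough to force this merge (rather than merely make it plausible) is the step I would budget the most care for.
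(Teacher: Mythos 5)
Your proposal is a plan rather than a proof: the step you yourself flag as needing the most care --- exhibiting a \emph{single} subset of size $k$ that simultaneously separates all $\binom{k}{2}$ pairs of permutations --- is exactly where the content of the lemma lies, and you leave it unresolved. Collecting one separating pair $\{X_{ij},Y_{ij}\}$ per pair of permutations produces up to $2\binom{k}{2}$ balls, and nothing in your sketch explains how these local witnesses get merged into $k$ balls; asserting that ``the distance lemma is precisely what lets those local witnesses be merged'' restates the goal rather than achieving it. The auxiliary fact you invoke --- that two geometric permutations of disjoint convex sets differ by reversing a contiguous block --- is not used in the paper, is not true in the generality you state it (e.g.\ $\p ABCD.$ and $\p BADC.$ differ by two disjoint swaps and coexist for general convex sets), and even if granted would not by itself bound the size of the witness set.

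The missing idea is a specific combinatorial device. The paper works with \emph{extreme} elements (those appearing first or last in a permutation) and derives two facts from the distance lemma: (i) any two geometric permutations of $\F$ share an extreme element, since disjoint extreme pairs $\{A_1,B_1\}$ and $\{A_2,B_2\}$ would give both $|a_1b_1|>|a_2b_2|$ and $|a_2b_2|>|a_1b_1|$; and (ii) two distinct geometric permutations sharing an extreme element $A$ must already differ on $\F\setminus\{A\}$, since otherwise one reads $\p AB\ldots XY.$ and the other $\p AYX\ldots B.$, forcing $|ay|>|ab|$ and $|ab|>|ay|$. Taking a \emph{minimal} subfamily $\G$ on which the restrictions are pairwise distinct, (ii) rules out an extreme element common to all three permutations, so there are three balls $A,B,C$, each extreme in exactly two of the permutations; the restrictions to $\{A,B,C\}$ are then forced to be $\p ABC.$, $\p ACB.$, $\p BAC.$, and minimality gives $\G=\{A,B,C\}$. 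The case $k=4$ requires a further argument: the fourth permutation's extreme elements lie in $\{A,B,C\}$ by (i), and a putative five-element witness set is excluded by chasing a cycle of distance inequalities to a contradiction. Without observations (i) and (ii), or some equivalent mechanism controlling \emph{where} distinct permutations must disagree, your compression step cannot be carried out.
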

\begin{proof}
  Let $\F$ be a family of $n \geq 4$ non-overlapping unit balls in
  $\R^d$. We call an element \emph{extreme} in a geometric permutation
  if it appears first or last in its order. We make two observations:
  \begin{enumerate}
  \item[(i)] Any two geometric permutations of $\F$ have an extreme
    element in common. Indeed, if two geometric permutations $\sigma_1$
    and $\sigma_2$ of $\F$ have disjoint sets of extreme elements
    $\{A_1,B_1\}$ and $\{A_2,B_2\}$ then applying the distance lemma to
    $\sigma_1$ yields $|a_1b_1|>|a_2b_2|$ and applying it to $\sigma_2$
    yields $|a_2b_2|>|a_1b_1|$, a contradiction.

  \item[(ii)] If two geometric permutations of $\F$ share an extreme
    element $A$ then they differ on $\F \setminus \{A\}$. Indeed, assume
    that the first geometric permutations writes $\p AB\ldots XY.$. If the
    second, which is distinct from the first, coincides with it on $\F
    \setminus \{A\}$ then it must be $\p AYX\ldots B.$. The distance lemma
    then implies both that $|ay|>|ab|$ and that $|ab|>|ay|$, a
    contradiction.
  \end{enumerate}
  Assume that $\F$ has three geometric permutations and let $\G$ be a
  minimal subfamily of $\F$ on which their restrictions~$\tau_1,
  \tau_2$, and $\tau_3$ are pairwise distinct.  There cannot be an
  extreme element common to all three~$\tau_i$ as observation~(ii)
  would contradict the minimality of~$\G$.  Hence, there exist three
  distinct elements $A,B,C \in \G$ such that $A$ is extreme in
  $\tau_1$ and $\tau_2$, $B$ is extreme in $\tau_2$ and $\tau_3$ and
  $C$ is extreme in $\tau_1$ and $\tau_3$. Then the restrictions of
  $\tau_1$, $\tau_2$ and $\tau_3$ to $\{A,B,C\}$ are $\p ABC.$, $\p
  ACB.$ and $\p BAC.$, implying $\G = \{A, B, C\}$.

  Assume now that $\F$ has four geometric permutations and, again, let
  $\G$ be a minimal subfamily of~$\F$ on which their restrictions
  $\tau_1, \tau_2, \tau_3$, and $\tau_4$ are pairwise distinct.  For $H
  \subseteq \G$ let ${\tau_i}_{|H}$ denote the restriction of
  $\sigma_i$ to $H$. As we just argued, $\G$ contains a triple $T =
  \{A,B,C\}$ such that $A$ is extreme in $\tau_1$ and $\tau_2$, $B$ is
  extreme in $\tau_2$ and $\tau_3$ and $C$ is extreme in $\tau_1$ and
  $\tau_3$; we further have
  \[
    {\tau_1}_{|T} = \p ABC., \quad {\tau_2}_{|T} = \p ACB., \quad {\tau_3}_{|T}
    = \p BAC..
  \]
  By observation~(i) the extreme elements of $\tau_4$ are among
  $\{A,B,C\}$, say $A$ and $C$. Since $\tau_1$ and $\tau_4$ have the
  same extreme elements but are different there must exist a pair
  $\{D,E\} \subset \G$ such that the restrictions of $\tau_1$ and
  $\tau_4$ to $\{A,C,D,E\}$ are different. Assume that $B \notin
  \{D,E\}$ so that $Q=\{A,B,C,D,E\}$ has size five. We write
  ${\tau_1}_{|Q} = AX_1X_2X_3C$, ${\tau_2}_{|Q} = AY_1Y_2Y_3B$ and
  ${\tau_3}_{|Q} = BZ_1Z_2Z_3C$.  If $X_3=B$ then $|bc|<|ac|$ and we
  must set $Z_1=A$. This implies that $|ab|<|bc|$ and we must set
  $Y_1=C$. This implies that $|ac|<|ab|$ which contradicts the two
  previous inequalities. It must then be that $X_3 \neq B$. This
  implies that $|ab|<|ac|$ which forces $Y_3=C$. This implies
  $|bc|<|ab|$, which forces $Z_3=A$. This implies that $|ac|<|bc|$,
  again a contradiction with the two previous inequalities. As a
  consequence, $B \in \{D,E\}$ and $\tau_1, \ldots \tau_4$ are already
  distinct on the quadruple $\{A,C,D,E\}$.
\end{proof}

We can now easily prove that there cannot be more than three geometric
permutations.
\begin{theorem}
  \label{the:at-most-three}
  A family of non-overlapping unit balls in $\R^d$ has at most three
  geometric permutations.
\end{theorem}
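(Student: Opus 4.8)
The plan is to leverage Lemma~\ref{lem:ThreeFourGP} to reduce the problem to a small, finite situation, and then rule out four geometric permutations on a family of four balls using the distance lemma. Suppose for contradiction that some family $\F$ of $n$ non-overlapping unit balls in $\R^d$ has at least four geometric permutations. By Lemma~\ref{lem:ThreeFourGP} (applied with $k=4$), there are four of the balls, say $A$, $B$, $C$, $D$, that already realize four distinct geometric permutations. So it suffices to show that four non-overlapping unit balls admit at most three geometric permutations. Note that $n\ge 4$ is automatic here, so the lemma applies; the case of exactly three geometric permutations is permitted by the theorem and needs no argument.

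Now I would work with the four balls $A,B,C,D$ and their four distinct geometric permutations $\sigma_1,\sigma_2,\sigma_3,\sigma_4$. The key structural fact is observation~(i) from the proof of Lemma~\ref{lem:ThreeFourGP}: any two geometric permutations share an extreme element. With only four balls, a geometric permutation is determined by its (unordered) pair of extreme elements together with the order of the two middle elements, so there are at most $\binom{4}{2}\cdot 1 = 6$ candidate permutations grouped by their extreme pairs. I would argue that the four pairs of extreme elements coming from $\sigma_1,\ldots,\sigma_4$ cannot be pairwise intersecting unless they form a ``sunflower''-type pattern: among four $2$-subsets of a $4$-set that pairwise intersect, either they all share a common element, or (after relabeling) they are exactly $\{A,B\},\{B,C\},\{C,A\}$ together with a repeat — but a repeat of an extreme pair forces the two permutations to agree on the two middle elements too (they'd be identical), contradicting distinctness. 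Hence the four extreme pairs cannot be realized; more carefully, the same chain-of-inequalities argument used at the end of the proof of Lemma~\ref{lem:ThreeFourGP} for the quintuple $Q$ specializes here to $\{A,B,C\}$ and yields a cyclic contradiction $|ab|<|bc|<|ca|<|ab|$.

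Concretely: by observation~(ii), no extreme element is common to all four $\sigma_i$, so as in the lemma there is a triple $\{A,B,C\}$ with ${\sigma_1}_{|\{A,B,C\}}=\p ABC.$, ${\sigma_2}_{|\{A,B,C\}}=\p ACB.$, ${\sigma_3}_{|\{A,B,C\}}=\p BAC.$. The fourth permutation $\sigma_4$ has its extreme pair among $\{A,B\},\{B,C\},\{C,A\}$ by observation~(i) applied three times, and since it is distinct from whichever $\sigma_i$ shares that extreme pair, $D$ must be inserted somewhere in the middle. But then applying the distance lemma to $\sigma_4$ together with two of $\sigma_1,\sigma_2,\sigma_3$ gives three strict inequalities among $|ab|,|bc|,|ca|$ that close up into a cycle, a contradiction. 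I expect the only real work is this last bookkeeping — checking that in each of the three cases for the extreme pair of $\sigma_4$, the induced order on $\{A,B,C\}$ forces the ``wrong'' direction of one of the three inequalities $|ab|\lessgtr|bc|\lessgtr|ca|\lessgtr|ab|$ — but this is exactly the same mechanism already deployed in Lemma~\ref{lem:ThreeFourGP}, so it is short. The main conceptual obstacle, namely reducing from $n$ balls to four and controlling which permutations can coexist, has already been handled by the distance lemma and Lemma~\ref{lem:ThreeFourGP}; what remains is a routine finite check.
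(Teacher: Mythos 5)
Your proposal is correct, but it handles the four-ball case by a genuinely different route than the paper. Both proofs reduce to four balls via Lemma~\ref{lem:ThreeFourGP} and then rely only on the distance lemma; the paper fixes one realized order $\p ABCD.$, discards four of the remaining eleven permutations outright, and checks that the incompatibility graph on the surviving seven (Fig.~\ref{fig:incomp7gp}) has independence number two, giving $1+2=3$. You instead suppose four permutations coexist and push further on the extreme-pair combinatorics from the proof of Lemma~\ref{lem:ThreeFourGP}: the four extreme pairs pairwise intersect, no ball is extreme in all four, so the pairs must form the triangle $\{A,B\},\{B,C\},\{C,A\}$ with one edge repeated; the two permutations sharing, say, the pair $\{A,C\}$ are $\p ABDC.$ and $\p ADBC.$ and jointly force $|ac|>|ab|$ and $|ac|>|bc|$, after which each choice for the other two permutations ($\p ACDB.$ or $\p ADCB.$, and $\p BADC.$ or $\p BDAC.$) yields an inequality closing a contradiction. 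I verified this bookkeeping; it works in every branch. Your version trades the seven-vertex graph check for a comparably small case analysis and is arguably more structural, in that it isolates the doubled extreme pair as the reason a fourth permutation cannot exist.

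Two points to fix in the write-up. First, your parenthetical claim that a repeated extreme pair forces two geometric permutations to coincide is false: $\p ABCD.$ and $\p ACBD.$ are distinct yet both have extremes $\{A,D\}$, and indeed this pair survives the distance lemma (it appears among the three permutations left at the end of the proof of Theorem~\ref{the:main}). Your ``concretely'' paragraph does not use this claim --- there the repeated pair is defeated by the two inequalities it generates, not by any identity of permutations --- but the parenthetical should be struck. Second, spell out why no ball is extreme in all four permutations: observation~(ii) would make all four restrictions to the remaining three balls pairwise distinct, whereas three balls admit only three geometric permutations.
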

\begin{proof}
  By Lemma~\ref{lem:ThreeFourGP} it suffices to prove the statement for
  families of size four. Let $A,B,C,D$ be four non-overlapping unit
  balls in $\R^d$ and assume that there is a line transversal in the
  order~$\p ABCD.$. The distance lemma implies that $|ad| >
  \max\{|ab|,|bc|,|cd|\}$ and no line can meet these balls in the
  order $\p ADCB.$ (which implies $|ab|>|ad|$), $\p BADC.$ (which
  implies $|bc|>|ad|$), $\p BDAC.$ (implying $|bc|>|ad|$), or $\p
  CBAD.$ (as this entails $|cd|>|ad|$). Of the twelve geometric
  permutations of four elements, this leaves the seven shown in
  Fig.~\ref{fig:incomp7gp} as candidates for the remaining geometric
  permutations of~$\F$.
  \begin{figure}[h]
    \centerline{\includegraphics{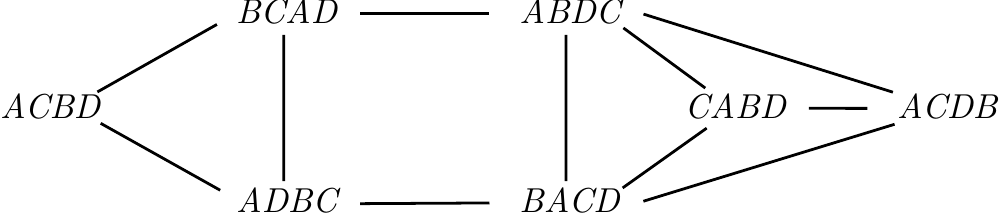}}
    \caption{Proof of Theorem~\ref{the:at-most-three}.}
    \label{fig:incomp7gp}
  \end{figure}
  It is easy to verify that the geometric permutations connected by
  edges in Fig.~\ref{fig:incomp7gp} are also incompatible by the
  distance lemma.  The resulting graph has no independent set of size
  larger than two, and so $\F = \{A, B, C, D\}$ has at most three
  geometric permutations.
\end{proof}

To prove the stronger statement of Theorem~\ref{the:main}, we need two
lemmas proven by Cheong et al.~\cite{cheong2005geometric} (their
proofs are short and self-contained).  For a directed line~$\ell$, we
write $\vec{\ell}$ for its direction vector, for points $p, q \in
\R^{d}$, we will write $\vc{pq}$ for the vector $q - p$ from~$p$
to~$q$.
\begin{lemma}[{\cite[Lemma~7]{cheong2005geometric}}]
  \label{lem:angle}
  Given three non-overlapping unit spheres $A$, $B$ and $C$ in
  $\R^{d}$, and a directed line~$\ell$ stabbing them in the order $\p
  ABC.$. Then $\angle(\vec{\ell}, \vc{ac}) < \pi/4$.
\end{lemma}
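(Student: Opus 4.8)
The plan is to set up coordinates along the transversal. Let $u=\vec{\ell}$ be a unit direction vector and decompose each point $x\in\R^{d}$ as $x=x_\parallel u+x^\perp$ with $x^\perp\perp u$, so that $x_\parallel$ records the orthogonal projection of $x$ onto $\ell$ and $\|x^\perp\|$ its distance to $\ell$. Since $\ell$ meets each ball, write $\|a^\perp\|=\sin\alpha$, $\|b^\perp\|=\sin\beta$, $\|c^\perp\|=\sin\gamma$ with $\alpha,\beta,\gamma\in[0,\pi/2]$; then $\ell\cap A$ is the part of $\ell$ with parameters in $[a_\parallel-\cos\alpha,\ a_\parallel+\cos\alpha]$, and similarly for $B$ and $C$. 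Because $\cos\angle(\vec{\ell},\vc{ac})=(c_\parallel-a_\parallel)/|\vc{ac}|$ and $|\vc{ac}|^{2}=(c_\parallel-a_\parallel)^{2}+\|c^\perp-a^\perp\|^{2}$, what we must prove is that $c_\parallel>a_\parallel$ and
\[
  (c_\parallel-a_\parallel)^{2}>\|c^\perp-a^\perp\|^{2}.
\]

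The engine is that two non-overlapping unit balls have \emph{essentially disjoint} shadows on $\ell$: a point lying in both shadows would lie in $A\cap B$, hence (the interiors being disjoint) would be the single common point of the two balls, and then $\ell$ would pass through it tangent to both, which the hypothesis forbids. Consequently the transversal order $ABC$ forces the shadow of $A$ to lie (weakly) before that of $B$, and the shadow of $B$ before that of $C$; comparing endpoints gives $b_\parallel-a_\parallel\ge\cos\alpha+\cos\beta\ge0$ and $c_\parallel-b_\parallel\ge\cos\beta+\cos\gamma\ge0$, which in particular settles the sign condition $c_\parallel>a_\parallel$ (equality throughout would be one of the excluded tangent configurations). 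Now using $b_\parallel-a_\parallel\ge0$, together with $\|a-b\|^{2}=(b_\parallel-a_\parallel)^{2}+\|a^\perp-b^\perp\|^{2}\ge4$ and $\|a^\perp-b^\perp\|\le\|a^\perp\|+\|b^\perp\|=\sin\alpha+\sin\beta$, we get $b_\parallel-a_\parallel\ge\sqrt{4-(\sin\alpha+\sin\beta)^{2}}$, and symmetrically $c_\parallel-b_\parallel\ge\sqrt{4-(\sin\beta+\sin\gamma)^{2}}$; hence
\[
  c_\parallel-a_\parallel\ \ge\ \sqrt{4-(\sin\alpha+\sin\beta)^{2}}+\sqrt{4-(\sin\beta+\sin\gamma)^{2}}.
\]
Combined with the trivial bound $\|c^\perp-a^\perp\|\le\sin\alpha+\sin\gamma$, an elementary computation gives $(c_\parallel-a_\parallel)^{2}>\|c^\perp-a^\perp\|^{2}$ for every configuration outside a residual family of nearly degenerate ones (those in which two or three of the balls are almost tangent to $\ell$).

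I expect this residual family to be the main obstacle. To handle it I would stop discarding the directions of $a^\perp,b^\perp,c^\perp$ inside the hyperplane $u^\perp$. The bound $\|c^\perp-a^\perp\|\le\sin\alpha+\sin\gamma$ is sharp only when $a^\perp$ and $c^\perp$ are nearly anti-parallel; but then $b^\perp$ cannot be nearly parallel to both, so at least one of $\|a^\perp-b^\perp\|$, $\|b^\perp-c^\perp\|$ is bounded away from its extreme value, and feeding the actual inner product $\langle a^\perp,b^\perp\rangle$ (respectively $\langle b^\perp,c^\perp\rangle$) into $(b_\parallel-a_\parallel)^{2}\ge4-\|a^\perp-b^\perp\|^{2}$ raises the lower bound for $c_\parallel-a_\parallel$ above $\|c^\perp-a^\perp\|$. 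The sole case of equality throughout forces the point where $\ell$ meets $B$ to be a contact point of $B$ with $A$ or with $C$ at which $\ell$ is tangent to both balls, precisely the configuration excluded by the transversal convention, so the inequality is strict. Carrying out this last step — choosing convenient trigonometric parameters for $a^\perp,b^\perp,c^\perp$ and organizing the few extremal sub-cases around the tangency convention — is where the remaining, still entirely elementary, work lies.
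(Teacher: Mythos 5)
Your reduction is sound: passing to coordinates along $\ell$, noting that the chords $\ell\cap A$, $\ell\cap B$, $\ell\cap C$ have disjoint interiors and appear in order (so $b_\parallel-a_\parallel\ge 0$ and $c_\parallel-b_\parallel\ge 0$), combining this with $|ab|,|bc|\ge 2$ to get $b_\parallel-a_\parallel\ge\sqrt{4-\|a^\perp-b^\perp\|^2}$ and its analogue, and observing that the lemma is equivalent to $c_\parallel-a_\parallel>\|a^\perp-c^\perp\|$. The genuine gap is precisely the step you defer: once you discard the directions of $a^\perp,b^\perp,c^\perp$ and keep only their norms, the claim $\sqrt{4-(\sin\alpha+\sin\beta)^2}+\sqrt{4-(\sin\beta+\sin\gamma)^2}>\sin\alpha+\sin\gamma$ is false on a substantial part of the parameter space, not just near tangency: for $\alpha=\beta=\gamma$ it fails as soon as $\sin^2\alpha\ge 4/5$, and at tangency it reads $0>2$. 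So the whole content of the lemma sits inside your ``residual family'', and what you offer there is a plan (``feed in the inner products, organize the extremal sub-cases''), not an argument; nothing quantitative is established. As submitted the proof is therefore incomplete. (A minor slip besides: a line through the contact point of two touching balls need not be tangent to either; what you actually use — and what is true — is only that $\ell\cap A$ and $\ell\cap B$ meet in at most one point because $A\cap B$ is at most a point.)

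The good news is that your own remedy, applied from the start, closes the gap without any case analysis. For $\|x\|,\|y\|\le 1$ one has $4-\|x-y\|^2=(4-\|x\|^2-\|y\|^2)+2\langle x,y\rangle\ \ge\ 2+2\langle x,y\rangle\ \ge\ \|x+y\|^2$, so the chord ordering together with $|ab|,|bc|\ge 2$ gives $b_\parallel-a_\parallel\ge\|a^\perp+b^\perp\|$ and $c_\parallel-b_\parallel\ge\|b^\perp+c^\perp\|$, whence by the triangle inequality
\[
c_\parallel-a_\parallel\ \ge\ \|a^\perp+b^\perp\|+\|b^\perp+c^\perp\|\ \ge\ \|a^\perp-c^\perp\|.
\]
Tracing the equality case forces $\|a^\perp\|=\|b^\perp\|=\|c^\perp\|=1$ and either $b^\perp=-a^\perp$ with $|ab|=2$ and $a_\parallel=b_\parallel$, or the symmetric situation for $B,C$; that is, $\ell$ tangent to two touching balls at their common point, exactly the configuration excluded by the paper's transversal convention, so the inequality is strict, as you anticipated. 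Note also that the paper itself does not prove this lemma but imports it from Cheong et al., so there is no in-paper proof to compare with; the completed argument above stays entirely within your framework.
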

\begin{lemma}[{\cite[Lemma~6]{cheong2005geometric}}]
  \label{lem:cylinder}
  Let $\C$ be a cylinder of radius one and length less than
  $s\sqrt{2}$ in~$\R^{d}$, for some $s \in \N$. Then $\C$ contains at
  most $2s$ points with pairwise distance at least two.
\end{lemma}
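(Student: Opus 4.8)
The plan is to reduce the statement to the case $s=1$ and then argue about three points directly. Let $u$ be a unit vector along the axis of $\C$ (normalized to pass through the origin). Since $\C$ has length less than $s\sqrt2$, cutting it by $s-1$ hyperplanes orthogonal to $u$ produces $s$ sub-cylinders, each of radius one and length less than $\sqrt2$; any set of points with pairwise distance at least two meets each sub-cylinder in a set with the same property, so it suffices to prove that \emph{a cylinder of radius one and length less than $\sqrt2$ contains at most two points with pairwise distance at least two}. Writing a point $p\in\C$ as $p=(p\cdot u)u+q$ with $q\perp u$ and $|q|\le1$, the relation I will use repeatedly is that if $p,p'$ are at distance at least two then $|q-q'|^2\ge4-(p\cdot u-p'\cdot u)^2$, which is larger than $2$ whenever $|p\cdot u-p'\cdot u|<\sqrt2$.

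So suppose $p_1,p_2,p_3$ lie in such a cylinder with pairwise distance at least two; set $t_i=p_i\cdot u$, write $p_i=t_iu+q_i$, assume $t_1\le t_2\le t_3$, and let $h=t_3-t_1<\sqrt2$. First I would show that \emph{at most one} of the three pairs $\{q_i,q_j\}$ has $|q_i-q_j|>\sqrt3$: if both $|q_i-q_j|>\sqrt3$ and $|q_i-q_k|>\sqrt3$, then $q_i\cdot q_j<-\tfrac12$ and $q_i\cdot q_k<-\tfrac12$ (since $|q_\ell|\le1$), so $|q_j+q_k|\ge|q_i\cdot(q_j+q_k)|>1$ and hence $|q_j-q_k|^2=2|q_j|^2+2|q_k|^2-|q_j+q_k|^2<3$. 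Consequently at least two pairs satisfy $|q_i-q_j|\le\sqrt3$, and each such pair has $(t_i-t_j)^2\ge4-3=1$, i.e.\ $|t_i-t_j|\ge1$.

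I would then split into cases according to which pair, if any, has cross-sections at distance $>\sqrt3$. If there is none, or if it is the extreme pair $\{p_1,p_3\}$, then both gaps $t_2-t_1$ and $t_3-t_2$ are at least $1$, forcing $h\ge2$, a contradiction. By the symmetry reversing the order of the $p_i$, the only remaining case is that $\{p_1,p_2\}$ is the far pair; then the short pairs $\{p_1,p_3\}$ and $\{p_2,p_3\}$ give $t_3-t_1\ge1$ and $t_3-t_2\ge1$, so with $\alpha=t_2-t_1\ge0$ and $\beta=t_3-t_2$ we have $1\le\beta\le h=\alpha+\beta<\sqrt2$.

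This last case is the crux, and I expect it to be the main obstacle, because crude triangle-inequality estimates are too lossy here: the bound is tight, realised by two points with the same $u$-coordinate and antipodal cross-sections together with a third point at $u$-distance exactly $\sqrt2$ whose cross-section is orthogonal to the other two (giving three pairwise distances equal to $2$). The natural lossless tool is Apollonius' median identity about the midpoint $m=\tfrac12(q_1+q_2)$. From $|q_1-q_2|^2\ge4-\alpha^2$ one gets $|m|^2=\tfrac14\bigl(2|q_1|^2+2|q_2|^2-|q_1-q_2|^2\bigr)\le\tfrac14\alpha^2$, so $|m|\le\alpha/2$; moreover $|q_3-m|\le1+|m|$ and $\tfrac12|q_1-q_2|^2=|q_1|^2+|q_2|^2-2|m|^2\le2-2|m|^2$, so that the $|m|^2$ terms cancel and
\[
|q_1-q_3|^2+|q_2-q_3|^2=2|q_3-m|^2+\tfrac12|q_1-q_2|^2\le4+4|m|\le4+2\alpha .
\]
On the other hand the left-hand side is at least $(4-h^2)+(4-\beta^2)$, hence $h^2+\beta^2+2\alpha\ge4$; but with $\alpha=h-\beta$ this equals $h^2+\beta^2+2h-2\beta$, which is nondecreasing in $\beta$ on $[1,h]$ and therefore at most its value $2h^2$ at $\beta=h$, contradicting $2h^2<4$. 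This contradiction finishes the case, hence the sub-lemma, hence the lemma.
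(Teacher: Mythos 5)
The paper itself does not prove this lemma (it imports it from Cheong et al.), so I can only assess your argument on its own terms. Your reduction to $s=1$ by slicing is fine, and your Apollonius/median computation for the case you call the crux is correct and genuinely handles the tight configuration. However, there is a genuine gap earlier: the claim that \emph{at most one} pair has $|q_i-q_j|>\sqrt3$ does not follow from your argument. What you prove is that if the two pairs incident to a common index $i$ both exceed $\sqrt3$, then the third pair is below $\sqrt3$; since any two of the three pairs share an index, this only rules out \emph{all three} pairs being far, i.e.\ it gives ``at least one short pair,'' not ``at least two.'' And the stronger claim is false for general points in the unit ball: take $q_2=(1,0)$ and $q_1,q_3$ on the unit circle at angles $130^\circ$ and $230^\circ$; then $|q_1-q_2|^2=|q_2-q_3|^2\approx3.29>3$ while $|q_1-q_3|^2\approx2.35$, so two pairs are far and all three pairs still exceed $\sqrt2$, which is all your preliminary observation guarantees.

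The consequence is a missing case: two far pairs sharing the \emph{middle} point $p_2$, so that the only short pair is $\{p_1,p_3\}$. Then neither gap $t_2-t_1$ nor $t_3-t_2$ is forced to be at least $1$, and your Apollonius argument does not apply --- its monotonicity step uses $f'(\beta)=2\beta-2\geq0$, which needs $\beta\geq1$. (The other new case, two far pairs sharing an extreme point, is harmless: there the short pair is $\{p_2,p_3\}$ or $\{p_1,p_2\}$, which is exactly the hypothesis $\beta\geq1$ (resp.\ $\alpha\geq1$) your median argument needs, so it goes through verbatim; but you should say so.) The middle-vertex case is not trivially fillable by the same trick: running the median identity about $m=\tfrac12(q_1+q_3)$ instead yields $\alpha^2+\beta^2+2h\geq4$, hence only $h\geq\sqrt5-1\approx1.24<\sqrt2$, which is no contradiction. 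The case is in fact vacuous, but showing this requires a further argument --- e.g.\ writing $u,v$ for the angular distances of $q_1,q_3$ from $-q_2$ one gets $h\geq\alpha+\beta\geq2\sin\frac u2+2\sin\frac v2\geq2\sin\frac{u+v}2$ while the constraint $|q_1-q_3|^2\geq4-h^2$ gives $h\geq2\cos\frac{u+v}2$, whence $h\geq\sqrt2$ --- and this needs to be carried out (including for centers strictly inside the unit ball) before the proof is complete.
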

We analyze the intersection of two cylinders more carefully in the
following lemma:
\begin{lemma}
  \label{lem:cylinder6}
  Let $\C_1$ and~$\C_2$ be cylinders of radius one and axes~$\sigma_1$
  and~$\sigma_2$ in~$\R^{d}$.  If $\pi/4 < \angle(\vc{\sigma_1},
  \vc{\sigma_2}) \leq \pi/2$, then the intersection~$\C_1 \cap \C_2$
  contains at most six points with pairwise distance at least two.
\end{lemma}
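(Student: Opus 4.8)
The plan is to bound the number of far-apart points in $\C_1 \cap \C_2$ by covering this intersection with a bounded number of regions, each of which can contain at most one point of a set with pairwise distances at least two. The natural regions are balls of radius one: any unit ball contains at most one such point, since two points at distance $\geq 2$ cannot both lie in a set of diameter $< 2$ (a closed ball of radius one has diameter two, so one has to be slightly careful and instead cover by sets of diameter strictly less than two, or argue that $\C_1\cap\C_2$ has a compactness structure that lets the boundary case be absorbed --- I will simply cover by six translates of an open unit ball whose closures still cover, which suffices). So the whole problem reduces to showing that $\C_1 \cap \C_2$ can be covered by six balls of radius one.

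First I would set up coordinates adapted to the two axes. Let $\theta = \angle(\vc{\sigma_1},\vc{\sigma_2}) \in (\pi/4,\pi/2]$ and let $w$ be a common perpendicular segment realizing the distance between the lines $\sigma_1$ and $\sigma_2$; place its midpoint (or the foot on $\sigma_1$) at the origin, with $\vc{\sigma_1}$, $\vc{\sigma_2}$, and the perpendicular direction all lying in a $3$-dimensional coordinate subspace $V$. The key geometric observation is that $\C_1 \cap \C_2$ is \emph{contained in a ball of bounded radius} centered near the ``crossing region'' of the two axes: a point $x \in \C_1$ has its orthogonal projection onto $\sigma_1$ within distance one of $x$, and similarly for $\sigma_2$; since the two axes form an angle bounded away from zero (by $\pi/4$), a point close to both axes must be close to their near-intersection point, and hence $\C_1\cap\C_2$ lies in a ball of radius $O(1)$ whose actual size I would compute from $\theta$ and the inter-axis distance. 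More precisely, decomposing $x = x' + x''$ with $x' \in V$ and $x'' \in V^\perp$, membership in $\C_1$ forces $|x''| \le 1$ and constrains $x'$ to a slab of width $2$ around the line $\sigma_1 \cap V$ in the plane; the same for $\C_2$; and the intersection of the two slabs in the plane is a parallelogram of bounded diameter because the slabs are not close to parallel. So $\C_1\cap\C_2$ sits inside $P \times B^{d-2}$ for an explicitly bounded planar parallelogram $P$ (the bound uses $\angle > \pi/4$ crucially through $1/\sin\theta \le \sqrt2$) and a unit ball $B^{d-2}$ in the orthogonal complement of $V$'s plane... actually it is cleaner to keep the third axis direction: $\C_1\cap\C_2 \subseteq K \times B^{d-3}$ where $K$ is a convex body in the $3$-space $V$.

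The heart of the matter is then the covering count: show the bounded region $K\subseteq V$ (the intersection of two solid cylinders of radius one in $\R^3$ whose axes meet at angle $\theta \in (\pi/4,\pi/2]$, possibly offset by the inter-axis distance, which only shrinks $K$) can be covered by six unit balls, uniformly over all such $\theta$ and offsets. I expect \emph{this} to be the main obstacle and where the constant $6$ (as opposed to something larger) is forced. The approach I would take: by symmetry and convexity it suffices to treat the worst case, which by a monotonicity argument in the offset is the case of axes \emph{crossing} (zero offset) and, I suspect, $\theta = \pi/4$ (the extreme allowed angle, where $K$ is longest --- at $\theta = \pi/2$ the region is more compact and six balls are clearly plenty, in fact fewer suffice). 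Slice $K$ by planes orthogonal to the angle-bisector of the two axes: each slice is the intersection of two strips and is a bounded hexagon/parallelogram, and I would exhibit an explicit placement of ball centers --- e.g. two ``rows'' of three centers each, arranged along the bisector direction --- and verify the covering by a direct but short estimate using that a unit cylinder's cross-section perpendicular to a direction making angle $\leq \pi/2$ with its axis has ``width'' $\leq 2/\sin\theta \leq 2\sqrt2$. The case analysis is finite (essentially one extremal configuration up to the continuous parameter $\theta$, handled by monotonicity), and I would present the ball centers with coordinates and check the six covering inequalities. Combined with the first paragraph, this yields at most six points at pairwise distance $\geq 2$ in $\C_1\cap\C_2$.
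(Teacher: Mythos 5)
Your reduction of the problem to ``cover $\C_1\cap\C_2$ by six sets of diameter less than two'' is where the proof lives, and that covering is neither carried out nor, I believe, achievable; in any case it is a strictly stronger claim than the lemma. The packing bound and the covering bound genuinely diverge for these cylinder-like regions. A clean way to see this is to test your paradigm on the paper's own building block, Lemma~\ref{lem:cylinder} with $s=1$: a radius-one cylinder section of length just under $\sqrt{2}$ contains at most two points at pairwise distance at least two, yet it contains a full circle of radius one (the boundary of any cross-sectional disk), and by the Lusternik--Schnirelmann theorem a circle of radius one cannot be covered by two sets of diameter less than two (one of any two closed covering sets must contain an antipodal pair, at distance exactly two, and taking closures does not change diameters). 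So already the smallest instance of ``at most $k$ far-apart points, hence coverable by $k$ small sets'' is false. The same obstruction bites your six-ball plan directly: when the axes meet (the worst case you single out), $\C_1\cap\C_2$ contains the entire unit sphere centered at the intersection point, and by Lusternik--Schnirelmann at least four sets of diameter less than two are needed for that sphere alone; you are then left with two sets for the two ``wings,'' which for $\theta$ near $\pi/4$ reach out to distance $1/\sin\theta+\cot\theta+$(radius)~$\approx 2.6$ from the center, plus all the points of the body outside the central ball (for each $|c|\le\tan(\theta/2)$ the cross-section at $x_1=c$ still contains antipodal pairs at distance exactly two on the unit circle about $(c,0,0)$). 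Your proposed ``two rows of three centers'' cannot absorb all of this, and no explicit verification is offered.

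The paper avoids coverings entirely. It cuts $\C_1\cap\C_2$ into a central section of $\C_1$ of length $2/\sin\theta<2\sqrt{2}$, to which it applies the \emph{packing} Lemma~\ref{lem:cylinder} (at most four separated points --- a bound that, per the above, cannot be obtained by covering that section with four small sets), plus two residual caps, and only for those caps does it use a diameter argument, showing each has diameter less than two by exhibiting a point within distance one of every point of the cap. If you want to salvage your approach, you must either prove the six-set covering (which I expect is impossible) or replace the covering of the central part by a packing argument of the type of Lemma~\ref{lem:cylinder}; the latter is exactly the paper's route. Your first paragraph's handling of the boundary case (closed unit balls have diameter exactly two, so they can contain two points at distance exactly two, which the lemma's ``at least two'' does not exclude) is a further, smaller issue that forces you into the strictly-less-than-two regime where the Lusternik--Schnirelmann obstruction applies.
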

\begin{proof}
  We choose a coordinate system where $\sigma_1$ is the $x_1$-axis,
  and $\sigma_2$ is the line $(t\cos\theta, t\sin\theta, d, 0, \ldots,
  0)$, where $\theta = \angle(\vc{\sigma_1}, \vc{\sigma_2}) > \pi/4$
  and $d \geq 0$ is the distance between~$\sigma_1$ and~$\sigma_2$.
  The left side of Fig.~\ref{fig:cylinders} shows the projection on
  the $x_1 x_2$-plane.
  \begin{figure}[h]
    \centerline{\includegraphics{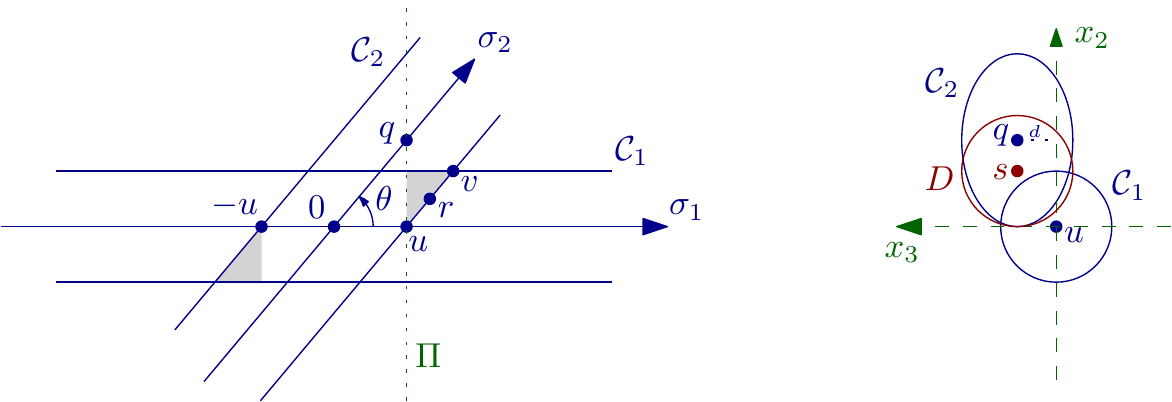}}
    \caption{Projection of $\C_1$ and $\C_2$ on the $x_1 x_2$-plane.}
    \label{fig:cylinders}
  \end{figure}
  Consider the points $u = (1/\sin\theta, 0, 0, \dots, 0)$ and $v =
  (1/\sin\theta + \cot\theta, 1, 0, \dots, 0)$ marked in the figure.
  Since $\theta > \pi/4$, the distance between $u$ and~$-u$ is less
  than $2\sqrt{2}$, and so by Lemma~\ref{lem:cylinder} the section of
  $\C_1$ between $-u$ and~$u$ contains at most four points of pairwise
  distance at least two.  All remaining points in $\C_1 \cap \C_2$
  must project into the two symmetric shaded regions.  We will now
  show that these regions have diameter less than two and can
  therefore contain only one point each, proving the lemma.

  Let $p \in \C_1 \cap \C_2$ be a point with $u_1 < p_1 \leq v_1$ (we
  use indices for the coordinates in~$\R^{d}$).  We will show that
  $|pr| < 1$, where $r = \big((u_1 + v_1)/2, \frac 12, \frac d2, 0,
  \dots, 0\big)$ (note that $r$ does not lie in the $x_1 x_2$-plane).
  Since $\theta > \pi/4$, we have $v_1 - u_1 = \cot\theta < 1$, and so
  $|p_1 - r_1| < 1/2$.  Let $\Pi$ denote the hyperplane $x_1 = u_1$,
  and let $p\s$ and $r\s$ be the orthogonal projection of $p$ and~$r$
  into~$\Pi$.  We observe that $p\s \in \C_1 \cap \C_2$. The
  intersection $\C_1 \cap \Pi$ is the unit-radius ball around the
  origin in~$\Pi$.  The intersection $\C_2 \cap \Pi$ is an ellipsoid
  with center~$q = (1/\sin\theta, 1/\cos\theta, d, 0, \dots, 0)$, see
  right hand side of Fig.~\ref{fig:cylinders}.  $\C_2 \cap \Pi$ contains exactly
  the points $x = (u_1, x_2, x_3, \dots, x_d) \in \Pi$ with
  \[
  (\cos\theta)^{2} (x_2-\frac{1}{\cos\theta})^{2} + (x_3 - d)^{2}
  + \sum_{i = 4}^{d} x_{i}^{2} \leq 1.
  \]
  Consider the ball~$D$ with center $s = (u_1, 1, d, 0, \dots, 0)$ and
  radius one.  For a point $x \in \Pi$ with $0 \leq x_2 \leq 1$, we
  have
  \[
  1 - x_2 \leq 1 - (\cos\theta)x_2 = (\cos
  \theta)(\frac{1}{\cos\theta} - x_2),
  \]
  and so $x \in \C_2$ implies $x \in D$.  It follows that $p\s \in
  \C_1 \cap D$. Since $|us| \geq 1$ and $r\s = (u + s)/2$ is the
  midpoint of the two centers we have $|p\s r\s| \leq \sqrt{3}/2$. It
  follows that $|pr|^{2} = |p_1 - r_1|^{2} + |p\s r\s|^{2} < 1/4 + 3/4
  = 1$.
\end{proof}

\begin{proof}[Proof of Theorem~\ref{the:main}]
  We proved the bound for $n \leq 6$ in
  Theorem~\ref{the:at-most-three}, so it remains to consider
  families~$\F$ of $n \geq 7$ balls.  We show that the geometric
  permutations $\p XYZU.$ and $\p XUYZ.$ are incompatible for~$\F$.
  Assume for a contradiction that $\ell$ is an oriented transversal
  inducing the order $\p XYZU.$, and $\ell'$ is an oriented
  transversal inducing the order $\p XUYZ.$.  By
  Lemma~\ref{lem:angle}, we have $\angle(\vec{\ell}, \vc{\ell'}) \leq
  \angle(\vec{\ell}, \vc{xz}) + \angle(\vc{\ell'}, \vc{xz}) <
  \pi/2$.  Since $\ell'$ meets $U$ before $Y$, we have
  $\angle(\vc{\ell'}, \vc{yu}) > \pi/2$, and by
  Lemma~\ref{lem:angle} again we have $\angle(\vec{\ell}, \vc{yu}) <
  \pi/4$, implying $\angle(\vec{\ell}, \vc{\ell'}) > \pi/4$.
  Consider now the cylinders~$\C$ and~$\C'$ of radius one with
  axes~$\ell$ and~$\ell'$.  Since $\ell$ and $\ell'$ are
  transversals for~$\F$, the centers of all balls in~$\F$
  are contained in $\C \cap \C'$. By Lemma~\ref{lem:cylinder6}, this
  implies $n \leq 6$, a contradiction.

  We now assume that $\F$ has three geometric permutations. By
  Lemma~\ref{lem:ThreeFourGP} there is a subset $\G = \{A, B, C, D\}$
  of four balls such that $\G$ already has three geometric
  permutations. We can assume $\p ABCD.$ is one of them.  The
  incompatible pair $(\p XYZU., \p XUYZ.)$ implies that $\p ACDB.$,
  $\p ADBC.$, $\p CABD.$, and $\p BCAD.$ cannot exist.  Of the
  geometric permutations shown in Fig.~\ref{fig:incomp7gp}, this only
  leaves $\p ACBD.$, $\p ABDC.$, and $\p BACD.$. Since we already know
  $\p ABDC.$ and $\p BACD.$ to be incompatible by the distance lemma
  (see Fig.~\ref{fig:incomp7gp}), the last pair must therefore include
  $\p ACBD.$.  But this permutation is incompatible with the other two
  because they form pairs of the form~$(\p XYZU., \p XUYZ.)$.
\end{proof}

\section{Proof of the distance lemma}
\label{sec:distance-lemma}

We say that a family $\F$ \emph{pins} a line $\ell$ or that $\ell$ is
pinned by $\F$ if $\ell$ is a line transversal to $\F$ and any
arbitrarily small perturbation of $\ell$ is not a line transversal to
$\F$. \footnote{Equivalently, a line is pinned by $\F$ if it is an
  isolated point in the space of line transversals to $\F$ endowed
  with the natural topology on the space of lines, for instance as
  given by the Grassmann-Pl\"ucker coordinates.} It is often
convenient to deform a family of balls and lines into a configuration
where the lines are pinned.  The following lemma describes such a
deformation.
\begin{lemma}
  \label{lem:shrinking}
  Let $\F(t)=\{B_1(t),B_2(t), \ldots, B_n(t)\}$ be a parameterized
  family of non-overlapping balls of radius~$t \in [0,1]$ in $\R^3$,
  with the property that $B_{i}(s) \subset B_{i}(t)$ for any $1 \leq i
  \leq n$ and $0 \leq s < t \leq 1$. If $\F(1)$ has a line transversal
  in the order $B_1(1)B_2(1)\ldots B_n(1)$ then there exists $t\s \in
  [0,1]$ such that $\F(t\s)$ has a pinned line transversal in the
  order $B_1(t\s)B_2(t\s)\ldots B_n(t\s)$.
\end{lemma}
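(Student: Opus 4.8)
The plan is to shrink the balls as far as possible while keeping a line transversal in the prescribed order, and to show that at the extremal radius the transversal is forced to be pinned. For $t\in[0,1]$ let $T(t)$ denote the set of oriented lines meeting $B_1(t),\dots,B_n(t)$ in this order, and put $t\s=\inf\{t\in[0,1]:T(t)\neq\emptyset\}$; this is well defined because $1$ lies in the set. The first observation is that $T(s)\subseteq T(t)$ whenever $s\le t$: if a line $\ell$ meets $B_i(s)\subset B_i(t)$ before $B_j(s)\subset B_j(t)$, then $\ell\cap B_i(t)$ and $\ell\cap B_j(t)$ are segments with disjoint relative interiors (as $\F(t)$ is non-overlapping), each containing the corresponding radius-$s$ sub-segment, so $\ell$ still meets $B_i(t)$ before $B_j(t)$; hence $\ell$ meets $\F(t)$ in the order $B_1(t)\cdots B_n(t)$.

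\textbf{Step 1: $t\s$ is attained.} Since $B_i(s)\subset B_i(t)$ are balls of radii $s<t$, the point of $B_i(s)$ farthest from $c_i(t)$ lies at distance $|c_i(s)-c_i(t)|+s\le t$, so $|c_i(s)-c_i(t)|\le t-s$ and each map $t\mapsto c_i(t)$ is $1$-Lipschitz. Every line in $\bigcup_{t\le1}T(t)$ meets the fixed ball $B_1(1)$, and the set of oriented lines meeting a fixed ball is compact. Choose $t_k\downarrow t\s$, $\ell_k\in T(t_k)$, and a subsequence with $\ell_k\to\ell\s$. Then $\operatorname{dist}(c_i(t\s),\ell_k)\le\operatorname{dist}(c_i(t_k),\ell_k)+|c_i(t_k)-c_i(t\s)|\le t_k+(t_k-t\s)\to t\s$, so $\operatorname{dist}(c_i(t\s),\ell\s)\le t\s$ and $\ell\s$ meets every $B_i(t\s)$. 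Passing to the limit in the linear order of the intersection segments, and using once more that $\F(t\s)$ is non-overlapping so that these segments are ordered, yields $\ell\s\in T(t\s)$.

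\textbf{Step 2: some line in $T(t\s)$ is pinned.} Suppose not, so every line in $T(t\s)$ is non-pinned. The fact I would invoke here is the following: \emph{if a family of balls admits a transversal in a given order but no pinned transversal in that order, then a slightly shrunk family still admits a transversal in that order.} Applied to $\F(t\s)$ and compared with the nested family $\F(t)$ for $t$ slightly below $t\s$ (using the Lipschitz bound from Step~1 to control how much slack a fixed line loses when the radius drops), this gives $T(t)\neq\emptyset$ for some $t<t\s$, contradicting the definition of $t\s$. Hence $T(t\s)$ contains a pinned line, which is the transversal required by the lemma, with parameter $t\s\in[0,1]$.

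\textbf{The main obstacle} is the fact used in Step~2. A first attempt at its proof runs as follows. In local coordinates on the space of lines the transversal conditions $\operatorname{dist}(c_i(t\s),\ell)^2\le(t\s)^2$ are polynomial, so $T(t\s)$ is semialgebraic and a non-pinned point $\ell\s$ lies on an injective analytic arc $\gamma\colon[0,\eps)\to T(t\s)$ with $\gamma(0)=\ell\s$ (curve selection lemma). For each ball tight at $\ell\s$ the nonnegative analytic function $t\s-\operatorname{dist}(c_i(t\s),\gamma(s))$ vanishes at $s=0$; if it is not identically zero for some such ball, then a short move along $\gamma$ creates slack there (the remaining balls have slack to spare), and since the centers move by at most $t\s-t$ when the radius drops to $t$, this slack makes the moved line a transversal of $\F(t)$ in the order for some $t<t\s$ --- contradicting minimality --- while otherwise the tight balls all stay tight along $\gamma$ and one iterates with a base line having strictly fewer tight balls. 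The hard case is the resulting degenerate configuration, in which every tight ball stays tight along every arc of $T(t\s)$ through the line: turning the ensuing first-order ``force balance'' of the tightness gradients into genuine pinnedness is exactly where the non-convexity of $\operatorname{dist}(c_i,\cdot)$ on the space of lines is felt, and a purely infinitesimal argument does not suffice. This is the technical heart of the proof, and we carry it out for (possibly touching) unit balls in Appendix~\ref{sec:holmsen}, extracting what we need from the argument of Holmsen et al.~\cite{holmsen2003helly} (compare also \cite{megyesi05,borcea2008line,cheong2008helly}).
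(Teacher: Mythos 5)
Your reduction to the critical radius $t\s$ --- the monotonicity $T(s)\subseteq T(t)$, the Lipschitz bound $|c_i(s)-c_i(t)|\le t-s$ on the centers, and the compactness argument showing $T(t\s)\neq\emptyset$ --- is correct and is exactly the paper's setup. But the entire content of the lemma lives in your Step~2, and there you state the key fact (``no pinned transversal in the order implies a slightly shrunk family still admits a transversal in that order'') without proving it: your curve-selection sketch, as you yourself note, breaks down precisely in the degenerate case where all tightness constraints remain active to first order along every arc of $T(t\s)$, and you then defer to an appendix you have not written. That step is the whole difficulty of the lemma, so as it stands the proof has a genuine gap.

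For comparison, the paper closes this gap by a global, non-infinitesimal argument rather than by analyzing the local structure of $T(t\s)$. It shows: if $\F(t\s)$ admits two distinct transversals $\ell_1\neq\ell_2$ with the same order (which is what non-pinnedness provides, since for non-overlapping balls the order is locally constant on the space of transversals), then there is a transversal meeting the \emph{interior} of every ball, and such a line survives shrinking to some $t<t\s$. If $\ell_1$ and $\ell_2$ are parallel this is immediate from the strip between them. Otherwise one places both lines parallel to the $xy$-plane, at heights $z_1,z_2$ and directions $\theta_1,\theta_2$, and proves for each pair of balls that the feasible direction set $\K(B_iB_j,z)$ satisfies a strict midpoint property (Lemma~\ref{lem:holmsen}, via the strict concavity of $G(z)=\arcsin\big((R(z)+R(z-b))/d\big)$); this yields a whole interval of directions at the middle height $z_0$ that are feasible for every pair, Helly's theorem in dimension one converts pairwise feasibility into actual ordered transversals of the disks $B_i(t\s)\cap H(z_0)$ for each such direction, and a short planar argument (Lemma~\ref{lem:two-not-pinned}: tangencies cannot alternate when two ordered transversals exist) upgrades one of these to a transversal through all the interiors. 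None of this --- the concavity computation, the Helly step, or the planar tangency argument --- is present in, or replaceable by, the local analysis you sketch, so Step~2 needs to be supplied in full.
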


\noindent
The proof of Lemma~\ref{lem:shrinking} is already implicit in Holmsen
et al.~\cite{holmsen2003helly}.  For completeness, we revisit their
proof in Appendix~\ref{sec:holmsen} and make the necessary
adjustments.

The following lemma allows us to reduce the dimension in which we have
to prove our statements.
\begin{lemma}
  \label{lem:dim-red}
  Let $\F$ be a family of non-overlapping unit balls in $\R^d$ with a
  line transversal~$\ell$, and let $\Sp$ be an affine subspace
  containing the centers of all balls in~$\F$.  The orthogonal
  projection~$\ell'$ of $\ell$ into the subspace~$\Sp$ is a line
  transversal to $\F$ realizing the same geometric permutation.
\end{lemma}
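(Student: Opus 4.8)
The plan is to show that orthogonal projection into $\Sp$ preserves both the transversal property and the order in which the balls are met. Let $\pi\colon\R^d\to\Sp$ denote the orthogonal projection, and set $\ell'=\pi(\ell)$. The key observation is that $\pi$ is a $1$-Lipschitz map, and more importantly it is distance-preserving on any line segment joining two points of~$\Sp$; in particular it does not increase the distance from a point~$p$ to the center~$b_i$ of any ball~$B_i$, because $b_i\in\Sp$ and hence $|\pi(p)-b_i|\le|p-b_i|$ (write $p=\pi(p)+w$ with $w\perp\Sp$, so $|p-b_i|^2=|\pi(p)-b_i|^2+|w|^2$). Since $\ell$ meets every~$B_i$, there is a point $p\in\ell$ with $|p-b_i|\le1$, whence $|\pi(p)-b_i|\le1$, so $\pi(p)\in B_i\cap\Sp$ and in particular $\ell'$ meets~$B_i$. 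This shows $\ell'$ is a line transversal (one must also note $\ell'$ is genuinely a line, i.e.\ $\ell$ is not orthogonal to $\Sp$; but if it were, all the $b_i$ would project to a single point and $\ell$ itself could meet at most one ball, so for $n\ge 2$ this degeneracy cannot arise, and for $n=1$ the statement is trivial).

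Next I would pin down the geometric permutation. Orient $\ell'$ so that $\pi$ is orientation-preserving along~$\ell$ (possible since $\vec{\ell}$ has nonzero component along~$\Sp$). For each~$i$, let $I_i=\{t : \text{the point of }\ell\text{ at parameter }t\text{ lies in }B_i\}$; this is a closed interval (a sphere is convex), and the order $B_1B_2\cdots B_n$ means these intervals appear in this left-to-right order along~$\ell$. Likewise let $I_i'$ be the corresponding parameter interval of $\ell'\cap B_i$. The contraction estimate above gives $\pi(I_i)\subseteq I_i'$, so each $I_i'$ contains a nonempty piece of $\pi(I_i)$, and since $\pi$ is monotone along~$\ell$, the left-to-right order of the sets $\pi(I_1),\dots,\pi(I_n)$ along~$\ell'$ is exactly $1,2,\dots,n$. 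Hence $\ell'$ meets the balls in an order compatible with $B_1B_2\cdots B_n$. Strictly speaking one should rule out ties, i.e.\ that $\ell'$ meets two balls in exactly the same set of points; but that would require $B_i\cap\Sp=B_j\cap\Sp$ along a segment, forcing $b_i=b_j$, contradicting that the balls are non-overlapping (and have positive radius). Therefore $\ell'$ realizes the same geometric permutation, and we are done.

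The only mildly delicate point — the "main obstacle," such as it is — is handling the degenerate case where the direction of~$\ell$ is orthogonal (or nearly so) to~$\Sp$, so that $\ell'$ collapses. As sketched above, this is harmless: if $\vec{\ell}\perp\Sp$, then $\pi(\ell)$ is a single point~$p_0\in\Sp$, and $p_0\in B_i\cap\Sp$ would have to hold for all~$i$, so all the balls share the common point~$p_0$, which for non-overlapping balls of equal positive radius is impossible once $n\ge2$. For $n\le1$ the lemma is vacuous. So the genuine content is entirely the two Pythagorean/monotonicity observations, and the proof is short. I would also remark that the same argument, with the same estimate $|\pi(p)-b_i|\le|p-b_i|$, shows $\pi$ does not increase pairwise distances among the centers, which is the form in which the lemma will be reused to push the distance lemma down to low dimension.
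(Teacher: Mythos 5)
Your proof is correct and rests on exactly the same observation as the paper's: since each center lies in $\Sp$, the orthogonal projection satisfies $|\pi(p)-b|\le|p-b|$ by Pythagoras, so each chord $\ell\cap B$ projects into $B$, and monotonicity of the projection along $\ell$ preserves the order. The paper states only this one inequality and concludes; your extra care about the degenerate case $\vec{\ell}\perp\Sp$ and about ties is fine (note only that two non-overlapping balls \emph{can} share a tangency point, but then $\ell$ would meet each only at that point, which the paper's convention excludes as a transversal).
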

\begin{proof}
  If $p$ is a point in a ball $B \in \F$ and $p'$ is the projection of
  $p$ into~$\Sp$, then $|bp'| \leq |bp|$, and so $p' \in B$. The lemma
  follows.
\end{proof}

We will use the following folklore characterization of triples of
balls pinning a line (we include a proof for completeness).
\begin{lemma}
  \label{lem:tri-tang}
  A set $\{A,B,C\}$ of three non-overlapping unit balls in $\R^3$ pins
  a line $\ell$ if and only if they are tangent to $\ell$, their
  centers $a,b,c$ are coplanar with $\ell$, and in that plane $\ell$
  separates the center of the middle ball (in the order of tangency)
  from the other two centers.
\end{lemma}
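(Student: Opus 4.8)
The plan is to prove the two directions separately, keeping everything in the plane spanned by the three centers whenever possible. For the ``if'' direction, suppose $A$, $B$, $C$ are tangent to $\ell$, have coplanar centers lying in a plane $\Pi$ containing $\ell$, and $\ell$ separates (inside $\Pi$) the center $b$ of the middle ball from $a$ and $c$. A perturbation of $\ell$ is a pair (direction change, translation). I would argue that any motion of $\ell$ that keeps it meeting $A$ and $C$ must, to first order, move $\ell$ within $\Pi$: because $A$ and $C$ lie on the same side of $\ell$ in $\Pi$, tilting $\ell$ out of $\Pi$ immediately loses contact with at least one of them on the side where the perturbation points away. Restricting attention to in-plane perturbations, the condition that $\ell$ stay tangent-or-secant to all three disks $A\cap\Pi$, $B\cap\Pi$, $C\cap\Pi$ is exactly the classical fact that a common tangent line to two disks on one side and a third disk on the other side is rigid in the plane — the first-order freedom in direction is pinned by the two outer disks (which force $\ell$ to rotate one way) and by the inner disk (which forces it to rotate the other way), so no nonzero perturbation survives. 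Hence $\ell$ is pinned.

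For the ``only if'' direction I would prove the contrapositive in three stages, showing that if any one of the three stated conditions fails then $\ell$ is not pinned. First, if $\ell$ is not tangent to one of the balls, say it meets the interior of $A$, then a small enough perturbation of $\ell$ still meets the interior of $A$ and still meets $B$ and $C$ (since meeting a convex body non-tangentially is an open condition), so $\ell$ is not pinned. So assume $\ell$ is tangent to all three. Second, I would use Lemma~\ref{lem:dim-red}: since the centers $a,b,c$ span an affine subspace $\Sp$ of dimension at most two, if they are \emph{not} coplanar with $\ell$ — i.e.\ $\ell\not\subset\mathrm{aff}(a,b,c)$ when they are collinear, or $\ell$ not in their plane when they are affinely independent — then projecting $\ell$ into $\Sp$ gives a transversal $\ell'\neq\ell$ realizing the same order; but $\ell'$ meets each ball, and I would check that $\ell'$ meets at least one of them non-tangentially (the projection strictly shortens the distance from a center to the tangency point unless that point already lies in $\Sp$, which cannot happen for all three when $\ell\not\subset\Sp$), reducing to the first case and contradicting pinning. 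Hence $a,b,c$ are coplanar with $\ell$; call this plane $\Pi$. Third, work in $\Pi$: the three tangency conditions say $\ell$ is a common tangent to the three disks. If $\ell$ does \emph{not} separate $b$ from $\{a,c\}$ — i.e.\ all three centers lie (weakly) on one side of $\ell$ — then rotating $\ell$ slightly about a suitable point, or translating it, keeps it on the far side of all three disks; concretely, a common tangent line with all disk-centers on one side always admits a one-parameter family of nearby common transversals (slide the line parallel, or rock it), so $\ell$ is not pinned. This exhausts the cases.

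The step I expect to be the main obstacle is the careful first-order analysis in the plane $\Pi$ for the ``if'' direction: making precise that two disks on one side of $\ell$ and one disk on the other side leave no room for perturbation, including ruling out out-of-plane tilts. The cleanest way to handle this is probably to parametrize a perturbed line by a point and a unit direction near $(\text{point on }\ell,\vec\ell)$, write the signed distance from each center $a,b,c$ to the perturbed line as a smooth function, note that at $\ell$ each equals $\pm 1$ with the sign for $b$ opposite to that for $a$ and $c$ (this is the separation hypothesis), and observe that the transversal condition is ``distance $\le 1$'' for each ball, i.e.\ the perturbed signed distances must stay $\le 1$ on the $a,c$ side and $\ge -1$ on the $b$ side simultaneously; the gradients of these three constraint functions at $\ell$ then point in directions that admit no common strictly-feasible perturbation, which is exactly pinning. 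I would then remark that an out-of-plane tilt only increases the distance from $a$ and from $c$ to the line to second order while the first-order picture is governed by the in-plane component, so no new freedom is gained. The other subtlety is the projection argument in the ``only if'' direction, where one must confirm the projected line still realizes the same order (which is Lemma~\ref{lem:dim-red}) and is genuinely different from $\ell$ — I would note that if the projection equaled $\ell$ then $\ell$ already lies in $\Sp$, contrary to assumption.
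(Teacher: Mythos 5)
Your ``only if'' direction has a logical gap in the coplanarity step. You produce the orthogonal projection $\ell'\neq\ell$ of $\ell$ into the affine hull of the centers and then argue that $\ell'$ meets some ball non-tangentially and can be perturbed; but pinning is a \emph{local} property of $\ell$, so exhibiting some other, possibly distant, non-pinned transversal $\ell'$ says nothing about whether $\ell$ is isolated. What is needed --- and what the paper does --- is a family of transversals accumulating at $\ell$: rotate $\ell$ continuously toward its projection and note, by the same computation as in Lemma~\ref{lem:dim-red}, that every intermediate line has smaller distance to each center and is therefore itself a transversal, so $\ell$ is not isolated. (A similar repair is needed in your first case: if $\ell$ meets the interior of $A$ but is tangent to $B$ and $C$, openness only keeps you inside $A$; you still must produce nearby lines that keep meeting $B$ and $C$, e.g.\ by the same rotation argument applied to $b$ and $c$.)

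The ``if'' direction is where your approach would actually fail as written. Pinning by three balls in $\R^3$ is \emph{not} a first-order phenomenon: the space of lines is four-dimensional, and the gradients of the three active constraints $\mathrm{dist}(x,\cdot)\le 1$ annihilate every out-of-plane perturbation --- moving $\ell$ out of the plane of centers changes the distance to an in-plane point only at \emph{second} order, and increases it for \emph{both} signs of the tilt, so your claim that such a tilt ``immediately loses contact on the side the perturbation points away from'' has both the wrong order and the wrong sign structure, and it does not confine the admissible perturbations to the plane at first order. Consequently there is always a nonzero direction along which all three linearized constraints hold with equality, so ``no common strictly feasible first-order perturbation'' is not ``exactly pinning''; you would need a genuine second-order analysis of the mixed in-plane/out-of-plane perturbations, which you flag as the main obstacle but do not supply. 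The paper sidesteps all of this with a short global separation argument: the plane through $\ell$ perpendicular to the plane of centers has $B$ in one closed halfspace and $A,C$ in the other, each ball meeting that plane only in its tangency point with $\ell$; by convexity any transversal in the order $\p ABC.$ must then lie in that plane, where it is forced through the collinear tangency points and hence equals $\ell$. I recommend adopting that argument; it proves the stronger statement that $\ell$ is the unique transversal in its order and avoids the perturbation analysis entirely.
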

\begin{proof}
  If $\{A,B,C\}$ satisfies the condition then the ball $B$ is
  separated from $A$ and $C$ by the plane $\Pi$ perpendicular in
  $\ell$ to the plane of centers. Any line transversal to $\{A,B,C\}$
  in the same order as $\ell$ must be contained in $\Pi$, and thus
  $\ell$ is pinned.

  Conversely, assume that $\{A,B,C\}$ pins $\ell$.  If $\ell$ is not
  contained in the plane of centers, then rotating $\ell$ toward its
  orthogonal projection into that plane decreases the distances to all
  centers as in Lemma~\ref{lem:dim-red}.  Since any intermediate line
  in this rotation is therefore a line transversal, $\ell$ is not
  pinned. The line $\ell$ is thus contained in the plane of centers, and
  the necessity of the separation condition is easily checked.
\end{proof}

\begin{proof}[Proof of distance lemma.]
  We first argue that the statement follows from the case~$d=3$.
  Indeed, let $\Sp$ denote a $3$-dimensional space containing the four
  balls' centers (if they are not coplanar, then $\Sp$ is uniquely
  defined). The space $\Sp$ intersects the four non-overlapping
  $d$-dimensional unit balls in four non-overlapping $3$-dimensional
  unit balls with the same centers. Let $\ell'$ denote the orthogonal
  projection of $\ell$ into $\Sp$. By Lemma~\ref{lem:dim-red}, $\ell'$ is
  a line transversal to the four $3$-dimensional balls with the
  ``same'' geometric permutation as~$\ell$.  It therefore suffices to
  prove the claim for the $3$-dimensional balls and~$\ell'$.

  We now assume that we are in the case~$d=3$.  We shrink the balls
  uniformly around their center. By Lemma~\ref{lem:shrinking} we will
  reach a configuration with transversal $\ell$ that is pinned by four
  non-overlapping unit balls~$\{A,B,C,D\}$. If the four centers are
  collinear then the statement is clear, so we assume otherwise.
  \begin{figure}[ht]
    \centerline{\includegraphics{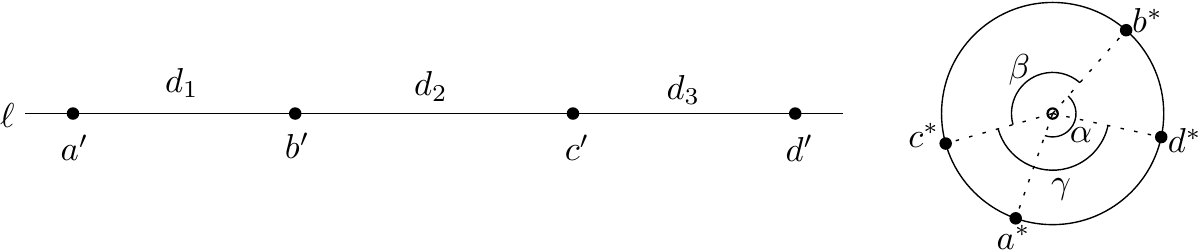}}
    \caption{Notation for the proof of the distance lemma (here all
      four balls are shown tangent to the pinned line).}
    \label{fig:LemmaDistanceABCD}
  \end{figure}

  We will use the following notation (refer to
  Fig.~\ref{fig:LemmaDistanceABCD}). Let $\ello$ denote a plane
  orthogonal to~$\ell$. For $x \in \{a,b,c,d\}$, let $x'$ denote the
  orthogonal projection of $x$ onto $\ell$ (that is, the point closest
  to $x$ on~$\ell$), and let $x\s$ denote the orthogonal projection
  of $x$ onto~$\ello$.  We set $d_1=|a'b'|$, $d_2=|b'c'|$,
  $d_3=|c'd'|$, and $\Delta = 2(d_1d_2 +d_1d_3 + d_2d_3)$.
  We have:
  \[
  |ad|^2 = (d_1 + d_2 + d_3)^2 + |a\s d\s|^{2}, \quad
  |ab|^2 = d_{1}^{\,2}+|a\s b\s|^2, \quad
  |bc|^2 = d_{2}^{\,2}+|b\s c\s|^2, \quad
  |cd|^2 = d_{3}^{\,2}+|c\s d\s|^2.
  \]
  Since $\ell$ is a line transversal with order~$\p ABCD.$, we have
  $d_{1}, d_{2}, d_3 > 0$ and therefore $\Delta > 0$.

  Our goal is to prove that
  \begin{align}
    \label{i:c1}
    |ad|^2 - |ab|^2 & =
    \Delta + d_2^{\,2}+d_3^{\,2} +|a\s d\s|^{2}-|a\s b\s|^2 > 0 \\
    \label{i:c2}
    |ad|^2 - |bc|^2 & =
    \Delta + d_1^{\,2}+d_3^{\,2} +|a\s d\s|^{2}-|b\s c\s|^2 > 0 \\
    \label{i:c3}
    |ad|^2 - |cd|^2 & =
    \Delta + d_1^{\,2}+d_2^{\,2} +|a\s d\s|^{2}-|c\s d\s|^2 > 0
  \end{align}

  Assume first that three of the balls already pin $\ell$.  There are
  essentially two cases:
  \begin{itemize}
  \item If the three balls are $\{A,B,C\}$ then, by
    Lemma~\ref{lem:tri-tang}, $a\s = c\s$ and $|b\s c\s|^2 = |a\s
    b\s|^2 =4$. The fact that $a\s = c\s$ immediately implies
    Inequality~\eqref{i:c3}. Since $C$ and~$D$ are non-overlapping we
    have
    \[
    d_{3}^{\,2} + |{a\s}{d\s}|^2 = d_{3}^{\,2} + |{c\s}{d\s}|^2 =
    |cd|^2 \ge 4,
    \]
    which implies Inequalities~\eqref{i:c1} and~\eqref{i:c2}. The case
    where the three balls are $\{B,C,D\}$ is symmetric.

  \item If the three balls are $\{A,B,D\}$ then, by
    Lemma~\ref{lem:tri-tang}, $a\s = d\s$ and $|a\s b\s|^2 =
    |b\s d\s|^2 =4$. Since $B$, $C$ and~$D$ are non-overlapping, we
    have
    \[
    d_2^{\,2} = |bc|^2 - |{b\s}{c\s}|^2 \ge 4- |{b\s}{c\s}|^2
    \quad \hbox{and} \quad d_3^{\,2} = |cd|^2 - |{c\s}{d\s}|^2 \ge
    4- |{c\s}{d\s}|^2 = 4 - |{c\s}{a\s}|^2.
    \]
    Using $|a\s d\s|^{2} = 0$ and $|a\s b\s|^{2} = 4$, we can bound
    \begin{align*}
      \Delta + d_2^{\,2}+d_3^{\,2} +|a\s d\s|^{2}-|a\s b\s|^2 & \geq
      \Delta + 4-|{b\s}{c\s}|^2 + 4- |{c\s}{a\s}|^{2} - 4 =
      \Delta + 4-|{b\s}{c\s}|^2 - |{c\s}{a\s}|^{2} \\
      \Delta + d_1^{\,2}+d_3^{\,2} +|a\s d\s|^{2}-|b\s c\s|^2 & \geq
      \Delta + d_1^{\,2}+4- |{c\s}{a\s}|^2 -|b\s c\s|^2 \\
      \Delta + d_1^{\,2}+d_2^{\,2} +|a\s d\s|^{2}-|c\s d\s|^2 & \geq
      \Delta + d_1^{\,2}+ 4- |{b\s}{c\s}|^2 -|c\s a\s|^2
    \end{align*}
    Since $c\s$ lies in the disk of diameter~$a\s b\s$, the
    triangle $a\s b\s c\s$ is right or obtuse, and
    \[
    |a\s c\s|^2+|c\s b\s|^2 \le |a\s b\s|^2 = 4.
    \]
    This implies Inequalities~\eqref{i:c1}--\eqref{i:c3}. The case
    where the three balls are $\{A,C,D\}$ is symmetric.
  \end{itemize}

  \medskip

  It remains to handle the case where no three balls in $\{A,B,C,D\}$
  pin $\ell$.  This implies that $\ell$ is tangent to all four balls,
  and the points $a\s, b\s, c\s, d\s$ lie on the unit circle
  around $\ell\s=\ell \cap \ello$ in~$\ello$.  We let
  $\alpha$ be the angle made by $a\s$ and $b\s$ at $\ell \s$,
  $\beta$ the angle made by $b\s$ and $c\s$ at $\ell \s$, and
  $\gamma$ the angle made by $c\s$ and $d\s$ at $\ell \s$, see
  Fig.~\ref{fig:LemmaDistanceABCD}.  All the angles are measured
  counterclockwise (even if they are larger than $\pi$) so that the
  angle made by $a\s$ and $d\s$ is the same as $\alpha+\beta+\gamma$
  modulo $2\pi$.

  We define a function~$g$ by $g(\phi) = \sqrt{2 + 2\cos\phi}$ for
  $\phi \in \R$. We claim that for any three angles $x, y, z$ we have
  \begin{equation}
    \label{i:g}
    g(x + y + z) \le g(x) + g(y) + g(z),
  \end{equation}
  and that the inequality is strict unless two of $x,y,z$ are equal to
  $\pi$ modulo $2\pi$. Indeed, let $f(\phi)=\sqrt{2 - 2\cos\phi} =
  g(\pi-\phi)$ and observe that $f(\phi)$ is the distance between two
  points on the unit circle that make an angle of $\phi$ at the center
  of the unit circle.  The triangle inequality immediately implies
  that for any two angles $\phi$ and $\theta$ we have $f(\phi+\theta)
  \leq f(\phi) + f(\theta)$, where equality holds only if $\phi$ or
  $\theta$ is equal to~$0$ modulo~$2\pi$. Thus, for any three angles
  $x, y, z$ we have
  \begin{align*}
  f\big(\pi - (x + y + z)\big) & = f\big(3\pi - (x + y + z)\big) \\
  & = f\big((\pi - x) + (\pi - y) + (\pi - z)\big)
  \leq f(\pi - x) + f(\pi - y) + f(\pi -  z),
  \end{align*}
  and Inequality~\eqref{i:g} follows, with equality only if two of
  $x,y,z$ are equal to $\pi$ modulo~$2\pi$.

  Consider the angles~$\alpha$, $\beta$, and~$\gamma$.  If $\alpha =
  \beta = \pi$, then by Lemma~\ref{lem:tri-tang}, $\{A, B, C\}$
  already pin~$\ell$, a contradiction.  If $\beta = \gamma = \pi$,
  then $\{B, C, D\}$ already pin~$\ell$, again a contradiction.
  We thus have
  \begin{align}
    \label{eq:g}
    g(\alpha + \beta + \gamma) & \leq g(\alpha) + g(\beta) + g(\gamma),
  \end{align}
  and the inequality is strict unless $\alpha = \gamma = \pi$. In this
  case\footnote{Theorem~\ref{the:min4pinning} actually implies that in
    this case $\ell$ is not pinned at all, the argument here keeps the
    proof self-contained.} $|a\s b\s| = |c\s d\s| =
  2$, and therefore $|ab| > 2$ and~$|cd| > 2$.

  We observe that
  \begin{align*}
    |ab|^2 & = d_1^{\,2} + 2 - 2\cos\alpha = d_{1}^{2} + 4 -
    g(\alpha)^{2},\\
    |bc|^2 & = d_2^{\,2} + 2 - 2\cos\beta = d_{2}^{2} + 4 -
    g(\beta)^{2},\\
    |cd|^2 & = d_3^{\,2} + 2 - 2\cos\gamma = d_{3}^{2} + 4 -
    g(\gamma)^{2}.
  \end{align*}
  Since the balls are non-overlapping, this implies $d_{1} \geq
  g(\alpha)$, $d_{2} \geq g(\beta)$, and $d_{3} \geq g(\gamma)$.  In
  particular, when Inequality~(\ref{eq:g}) is not strict, then $d_{1}
  > g(\alpha)$ and $d_{3} > g(\gamma)$.

  Recall that $\Delta = 2(d_{1}d_{2} + d_{1}d_{3} + d_{2}d_{3})$ and
  set $\Delta' = 2\big(g(\alpha)g(\beta) + g(\alpha)g(\gamma) +
  g(\beta)g(\gamma)\big) \leq \Delta$.
  We can write
  \begin{align*}
    |ad|^2 & = (d_{1} + d_{2} + d_{3})^{2} + 4 - g(\alpha + \beta + \gamma)^{2} \\
    & = d_{1}^{\,2} + d_{2}^{\,2} + d_{3}^{\,2} +
    \Delta + 4 - g(\alpha+\beta+\gamma)^{2} \\
    & \geq d_{1}^{\,2} + d_{2}^{\,2} + d_{3}^{\,2} +
    \Delta' + 4 - g(\alpha+\beta+\gamma)^{2}.
  \end{align*}
  We can now complete the proof.
  \begin{align}
    |ad|^2 & \geq d_{1}^{\,2} + d_{2}^{\,2} + d_{3}^{\,2} +
    \Delta' + 4 - g(\alpha+\beta+\gamma)^{2} \nonumber\\
    & \geq d_{1}^{\,2} + g(\beta)^{2} + g(\gamma)^{2} +
    \Delta' + 4 - g(\alpha+\beta+\gamma)^{2}
    \label{eq:g-2} \\
    & = \big(g(\alpha) + g(\beta) + g(\gamma)\big)^{2}
    - g(\alpha+\beta+\gamma)^{2}  + d_{1}^{\,2} + 4 - g(\alpha)^{2} \nonumber\\
    & \geq d_{1}^{\,2} + 4 - g(\alpha)^{2} = |ab|^2
    \label{eq:g-3}
  \end{align}
  If Inequality~(\ref{eq:g-3}) is not strict, then
  Inequality~(\ref{eq:g-2}) is strict, and so $|ad| > |ab|$.
  \begin{align}
    |ad|^2 & \geq d_{1}^{\,2} + d_{2}^{\,2} + d_{3}^{\,2} +
    \Delta' + 4 - g(\alpha+\beta+\gamma)^{2} \nonumber\\
    & \geq g(\alpha)^{2} + d_{2}^{\,2} + g(\gamma)^{2} +
    \Delta' + 4 - g(\alpha+\beta+\gamma)^{2} \label{eq:g-4} \\
    & = \big(g(\alpha) + g(\beta) + g(\gamma)\big)^{2} -
    g(\alpha+\beta+\gamma)^{2}  + d_{2}^{\,2} + 4 - g(\beta)^{2}
    \nonumber \\
    & \geq d_{2}^{\,2} + 4 - g(\beta)^{2} = |bc|^2
    \label{eq:g-5}
  \end{align}
  Again, if Inequality~(\ref{eq:g-5}) is not strict, then
  Inequality~(\ref{eq:g-4}) is, and we have $|ad| > |bc|$.
  By symmetry, we also obtain $|ad|>|cd|$.
\end{proof}

\section{Conjectures on four unit balls and two lines}
\label{sec:conjecture}

We conjecture that the geometric permutations $\p ABCD.$ and $\p
ACDB.$ are incompatible for non-overlapping unit balls:
\begin{conjecture}
  \label{conj:abcd-acdb}
  There is no set of four non-overlapping unit balls in~$\R^3$
  admitting the geometric permutations $\p ABCD.$ and $\p ACDB.$.
\end{conjecture}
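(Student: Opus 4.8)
Although we are unable to settle Conjecture~\ref{conj:abcd-acdb}, we describe here the line of attack we would follow and indicate where it breaks down. The plan is to imitate the structure of the proof of the distance lemma: replace a hypothetical counterexample by a canonical pinned configuration in~$\R^3$, and then derive a contradiction from the coexistence of the two transversals. Suppose $\F = \{A,B,C,D\}$ is a family of non-overlapping unit balls with an oriented transversal~$\ell$ realizing the order~$\p ABCD.$ and an oriented transversal~$\ell'$ realizing the order~$\p ACDB.$. By Lemma~\ref{lem:dim-red} (projecting onto the affine hull of the four centers) we may assume $d = 3$; note that $\ell$ and~$\ell'$ are then distinct lines, since $\p ACDB.$ is neither~$\p ABCD.$ nor its reverse. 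Now shrink the four balls uniformly about their centers. For a fixed order, the set of radii admitting a transversal of that order is an interval ending at~$1$ (a transversal of the shrunken balls is still a transversal of the larger balls, in the same order, because the intersection segments nest and the balls stay non-overlapping), so there is a smallest radius admitting transversals of \emph{both} orders; by Lemma~\ref{lem:shrinking}, at that radius one of the two orders is realized by a \emph{pinned} transversal while the other is still realized by an ordinary transversal. Rescaling back to unit radius, we may assume that $\ell$ is pinned by~$\{A,B,C,D\}$ with order~$\p ABCD.$ and $\ell'$ realizes~$\p ACDB.$ (the case in which it is~$\ell'$ that is pinned runs along the same lines).

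Next I would exploit the rigidity of the pinned line~$\ell$. By Lemma~\ref{lem:tri-tang}, either three of the four balls already pin~$\ell$ — so those three are tangent to~$\ell$, their centers are coplanar with~$\ell$, and the middle center is separated by~$\ell$ from the other two — or $\ell$ is tangent to all four balls and the footpoints $a\s, b\s, c\s, d\s$ lie on the unit circle around $\ell\s = \ell \cap \ello$. In the latter case I would choose coordinates with $\ell$ the $x_1$-axis and record each center $x \in \{a,b,c,d\}$ by its position~$x_1$ along~$\ell$ (ordered $a_1 < b_1 < c_1 < d_1$) and an angle~$\varphi_x$, so that, just as in the distance lemma's proof, the non-overlap conditions read $(x_1 - y_1)^2 \geq 2 + 2\cos(\varphi_x - \varphi_y)$. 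A finer description of four-ball pinning configurations — of the kind developed in the remainder of this section — can then be used to constrain the admissible angle patterns $(\varphi_a, \varphi_b, \varphi_c, \varphi_d)$; the case in which only three balls pin~$\ell$ is treated with the analogous reduced parametrization, the fourth center left free inside $\C \cap \C'$, where $\C$ and~$\C'$ are the unit cylinders around $\ell$ and~$\ell'$.

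Then I would feed in the constraints coming from~$\ell'$ and look for an infeasibility. On the metric side, the distance lemma applied to~$\p ABCD.$ gives $|ad| > \max\{|ab|,|bc|,|cd|\}$ and applied to~$\p ACDB.$ gives $|ab| > \max\{|ac|,|cd|,|bd|\}$; rewriting all six squared distances through $(x_1, \varphi_x)$ turns these into polynomial inequalities in the coordinates. On the angular side, Lemma~\ref{lem:angle} gives $\angle(\vec{\ell}, \vc{ad}) < \pi/4$ and $\angle(\vc{\ell'}, \vc{ad}) < \pi/4$, hence $\angle(\vec{\ell}, \vc{\ell'}) < \pi/2$, and it also bounds $\angle(\vec{\ell}, \vc{bd})$ and $\angle(\vc{\ell'}, \vc{bc})$; moreover, since $\ell'$ meets~$D$ before~$B$ while $\ell$ meets~$B$ before~$D$, the components of $\vec{\ell}$ and $\vc{\ell'}$ along~$\vc{bd}$ have opposite signs. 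Intuitively, $\ell'$ must pass near all four fixed centers with direction almost parallel to~$\vc{ad}$ and yet pick up~$B$ last — a cyclic rotation of the order along~$\ell$ — and the aim is to show that this cannot be reconciled with the spacing and angular pattern forced by the pinning of~$\ell$, except when the configuration collapses (for instance when the four balls become mutually tangent with coplanar or otherwise special centers).

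The main obstacle is that, unlike the incompatibilities already disposed of — for example $\p ABCD.$ versus~$\p BADC.$, where the distance lemma directly produces both $|ad| > |bc|$ and $|bc| > |ad|$ — the pair $\p ABCD.$ / $\p ACDB.$ is \emph{consistent} at the level of pairwise distances: the system $|ad| > |ab| > \max\{|ac|,|cd|,|bd|\}$ together with $|ad| > |bc|$ has solutions, so no purely metric argument can succeed. One is thus forced into the fine geometry of the four-ball/two-line configuration — which subfamily pins~$\ell$, the exact interleaving of the footpoints on the unit circle, the admissible directions of~$\ell'$ — and this splinters into a delicate case analysis that does not seem to close up in full generality. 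This is exactly why the rest of this section settles for establishing a structural \emph{necessary} condition on any counterexample, forcing it to be highly degenerate, rather than ruling counterexamples out altogether; converting that necessary condition into a contradiction — presumably through an additional rigidity argument, or a computer-assisted treatment of the remaining degenerate cases — is the crux that we leave open.
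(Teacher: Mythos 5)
This statement is an open conjecture in the paper; the authors do not prove it, and your proposal correctly declines to claim a proof. Your assessment of the difficulty is accurate: the pair $\p ABCD.$\,/\,$\p ACDB.$ is indeed consistent at the level of the pairwise-distance inequalities produced by the distance lemma, so no purely metric argument of the kind that kills $\p ABCD.$ versus $\p BADC.$ can work, and the paper's Section~\ref{sec:conjecture} settles for exactly what you describe --- structural necessary conditions forcing any counterexample to be degenerate.

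Your outline matches the paper's actual program, with one refinement you stop short of. You shrink until \emph{one} of the two transversals is pinned; the paper's Lemma~\ref{lem:shrink-conj} goes further by then continuing to shrink each ball about a homothety center chosen on the first pinned line, so that the first transversal survives (and remains pinned) while the second becomes pinned as well. This yields a configuration in which \emph{both} lines are pinned, which is strictly more rigid than your setup and is what makes the subsequent analysis tractable: Theorem~\ref{the:double-pinning} then shows neither line can be pinned by only three of the balls (ruling out exactly the ``three balls pin $\ell$'' branch of your case analysis), and Theorem~\ref{the:min4pinning} shows that a minimal pinning by four balls forces an alternating hyperboloidal configuration, leading to the reformulation in Conjecture~\ref{conj:double-pinning} and the semi-algebraic systems of Appendix~\ref{sec:semialgebraic}. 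If you want to push your attack further, the double-shrinking trick is the step to adopt; with only one line pinned, the free transversal $\ell'$ retains too many degrees of freedom for the angular and metric constraints you list to close up.
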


\noindent
As we have seen in the proof of Theorem~\ref{the:main} in
Section~\ref{sec:at-most-three}, Conjecture~\ref{conj:abcd-acdb} would
imply that a family of at least four non-overlapping unit balls
in~$\R^{d}$ has at most two geometric permutations, settling our
question entirely. In this section, we study what a counter-example to
our conjecture would look like.

\bigskip

We first employ the shrinking technique to obtain a configuration
where both line transversals are pinned.

\begin{lemma}
  \label{lem:shrink-conj}
  If Conjecture~\ref{conj:abcd-acdb} is false then there exist four
  non-overlapping unit balls in~$\R^3$ that pin two lines realizing
  the geometric permutations $\p ABCD.$ and $\p ACDB.$.
\end{lemma}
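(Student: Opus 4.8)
\emph{Proof idea.} The plan is to invoke the shrinking technique of Lemma~\ref{lem:shrinking} twice: first to bring one of the two transversals into a pinned position, and then, from that configuration, a second time to pin the other transversal without unpinning the first.

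Suppose Conjecture~\ref{conj:abcd-acdb} fails, so there are four non-overlapping unit balls $A,B,C,D$ in $\R^3$ with transversals $\ell_1$ of order $\p ABCD.$ and $\ell_2$ of order $\p ACDB.$. Uniformly shrinking the balls around their centers and arguing as in the proof of Lemma~\ref{lem:shrinking}, but tracking the two orders at once, I would stop at the smallest common radius at which both orders are still realized; after rescaling this yields four non-overlapping unit balls that still realize both orders and at least one of whose two transversals is pinned --- for if neither were pinned one could shrink a little more and keep both orders. Exchanging the two orders if necessary, we may thus assume we have four non-overlapping unit balls $A,B,C,D$ that pin a line $m_1$ in the order $\p ABCD.$ and still admit a transversal $m_2$, not necessarily pinned, in the order $\p ACDB.$.

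For the second stage, let $\mathcal P\subseteq\{A,B,C,D\}$ be a minimal pinning subfamily of $m_1$. Each ball of $\mathcal P$ is tangent to $m_1$, since otherwise $m_1$ could be perturbed inside that ball's interior and the ball would be redundant in $\mathcal P$. I then define a nested family $\G(s)$ of balls of radius $s\in[0,1]$ with $\G(1)=\{A,B,C,D\}$, by sliding each center orthogonally toward $m_1$: a center of a ball of $\mathcal P$ so that the ball stays tangent to $m_1$, and a center of a ball outside $\mathcal P$ only once $s$ drops below its distance to $m_1$, thereafter also keeping it tangent. Such an orthogonal descent changes neither the plane a center spans with $m_1$, nor its side of $m_1$, nor its orthogonal projection onto $m_1$; hence at every $s$ the balls of $\mathcal P$ are tangent to $m_1$ at the same points and from the same sides, and $\G(s)$ still pins $m_1$ in the order $\p ABCD.$ --- immediately from Lemma~\ref{lem:tri-tang} when $|\mathcal P|=3$, and by a direct argument when $|\mathcal P|=4$. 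Since $\G(1)$ admits the $\p ACDB.$-transversal $m_2$, applying Lemma~\ref{lem:shrinking} to $\G$ and the order $\p ACDB.$ produces $s\s\in[0,1]$ with $\G(s\s)$ pinning a transversal of order $\p ACDB.$, while $\G(s\s)$ still pins $m_1$ in the order $\p ABCD.$. Finally $s\s>0$: as $s\to 0$ the four centers of $\G(s)$ converge to their projections onto $m_1$, which lie there in the (weak) order $a,b,c,d$ --- the projection of a ball of $\mathcal P$ is its contact point, and that of a ball outside $\mathcal P$ lies strictly on the correct side of each of those contact points, since the latter are single points --- so for small $s$ a transversal of $\G(s)$ is nearly parallel to $m_1$ and realizes an order in which $D$ is not last, never $\p ACDB.$. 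Rescaling $\G(s\s)$ by $1/s\s$ gives the required four non-overlapping unit balls.

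The nontrivial steps, beyond what is borrowed from the proof of Lemma~\ref{lem:shrinking}, are to check that $\G(s)$ stays non-overlapping for all $s$ and that $\G(s)$ keeps $m_1$ pinned --- the latter being Lemma~\ref{lem:tri-tang} when $|\mathcal P|\le 3$ but needing a separate verification when $m_1$ is pinned by all four balls. For non-overlapping, two balls of $\mathcal P$ are never a problem, since two balls tangent to a common line from opposite sides lie at distance at least the sum of their radii; the pairs involving a ball outside $\mathcal P$ are the delicate ones, and I expect them to be the main obstacle, to be handled by combining the non-overlapping inequalities holding in the initial unit-ball configuration with the fact that centers only descend orthogonally toward $m_1$, in a short case analysis on which sides of $m_1$ the centers lie and on whether $s$ exceeds the distance of a descending center to $m_1$.
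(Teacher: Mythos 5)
Your first stage is exactly the paper's: shrink uniformly about the centers until one of the two transversals, say $\sigma_1$ realizing $\p ABCD.$, becomes pinned while the other order is still realized. Your second stage, however, replaces the paper's one-line trick by a delicate deformation, and it is there that the proposal has genuine gaps. First, you do not actually verify that your sliding deformation keeps the balls non-overlapping; you only announce a case analysis for the pairs involving a ball outside $\mathcal P$, and even your dispatch of pairs inside $\mathcal P$ (``tangent from opposite sides'') fails to cover same-side pairs, which do occur --- in a triple pinning the two outer centers lie on the same side of the line by Lemma~\ref{lem:tri-tang}. (The verification can in fact be completed: the relevant functions $s \mapsto |x(s)y(s)|^2 - 4s^2$ are concave and nonnegative at the endpoints of each phase, but this is work you have not done.) Second, and more seriously, you assert that $m_1$ stays pinned throughout the deformation ``by a direct argument when $|\mathcal P|=4$.'' No such argument is available off the shelf: unlike the three-ball case, where Lemma~\ref{lem:tri-tang} characterizes pinning purely in terms of the contact points, sides, and coplanarity (all of which your deformation preserves), for four balls the paper only proves a \emph{necessary} first-order condition (Theorem~\ref{the:min4pinning}) and explicitly declines to prove the converse, noting that it ``requires non-trivial technical developments.'' Your deformation preserves the ridges and screens, hence the alternating hyperboloidal structure, but that alone does not yield pinning. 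This is a real missing step, not a routine verification.

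The idea you are missing is that one need not preserve pinnedness of $\sigma_1$ as an active property at all: since shrinking balls only destroys transversals, a pinned transversal of a family automatically remains pinned for any nested family of smaller balls \emph{provided it remains a transversal}. The paper arranges the latter by choosing, for each ball $X$, a point $x_0 \in X \cap \sigma_1$ and shrinking $X$ by a homothety centered at $x_0$. Then every shrunken ball still contains $x_0$, so $\sigma_1$ is still a transversal; the shrunken balls are nested inside the originals, so they are non-overlapping and $\sigma_1$ is still pinned, all for free; and Lemma~\ref{lem:shrinking} applies verbatim to this nested family to pin the second transversal. This one observation eliminates both of your problematic verifications and your endgame argument that $s\s>0$.
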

\begin{proof}
  Consider four non-overlapping unit balls~$\F = \{A, B, C, D\}$
  in~$\R^{3}$ that admit line transversals with orders~$\p ABCD.$
  and~$\p ACDB.$.  We uniformly shrink the four balls about their
  centers.  By Lemma~\ref{lem:shrinking}, we will reach a radius~$t_1
  > 0$ where the transversal~$\sigma_1$ for one of the two orders is
  pinned, while a transversal for the other order still exists.  For
  each ball $X \in \F$, we pick a point $x_0 \in X \cap \sigma_1$.  We
  continue shrinking the balls, but now we shrink $X$ with homothety
  center~$x_{0}$.  By Lemma~\ref{lem:shrinking}, we will reach a
  radius~$t_2 > 0$ where the transversal~$\sigma_2$ for the second
  order is pinned.  Since the points~$x_0$ lie in the shrunken balls,
  $\sigma_1$ is still a transversal, and since the balls of
  radius~$t_2$ lie inside the balls of radius~$t_1$, $\sigma_1$ is
  still pinned.  By scaling the balls back to unit radius, we obtain
  the configuration announced by the lemma.
\end{proof}

\subsection{Configurations with two pinned transversals}

In this section we restrict the geometry of configurations as in
Lemma~\ref{lem:shrink-conj}. We start with some geometric
preliminaries. Throughout this section we will be dealing with
families of at most four balls.

\begin{lemma}
  \label{lem:xyz-angle}
  If three non-overlapping unit balls $X, Y$ and $Z$ in $\R^{3}$ admit
  a line transversal with order~$\p XYZ.$, then the angles
  $\angle(yxz)$ and $\angle(xzy)$ are acute.
\end{lemma}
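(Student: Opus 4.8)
The plan is to reduce to the plane of the three centers, exploit the tangency/pinning structure, and then turn the statement into an elementary inequality about distances. First I would invoke Lemma~\ref{lem:dim-red}: since the claim only involves the three centers $x,y,z$ and the angles of the triangle they span, I may assume $d=2$ (or work in the plane $\Sp$ through $x,y,z$), so that the transversal $\ell$ can be taken to lie in that plane. By Lemma~\ref{lem:angle} applied to the order $\p XYZ.$, we have $\angle(\vec\ell,\vc{xz}) < \pi/4$. The key geometric observation is that the angle $\angle(yxz)$ at vertex $x$ is the angle between $\vc{xz}$ and $\vc{xy}$, and that the direction $\vc{xy}$ is ``close'' to the direction of $\ell$ as well — indeed $\angle(\vec\ell,\vc{xy})<\pi/4$ by Lemma~\ref{lem:angle} applied to just the pair along $\ell$ (or more carefully, to the order in which $\ell$ meets $X$ then $Y$, viewing a third ball or a degenerate argument). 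Combining these two bounds via the triangle inequality for angles gives $\angle(yxz) \le \angle(\vc{xy},\vec\ell) + \angle(\vec\ell,\vc{xz}) < \pi/4 + \pi/4 = \pi/2$, so $\angle(yxz)$ is acute; the argument for $\angle(xzy)$ is symmetric, using that $\ell$ meets $Y$ before $Z$ so $\angle(\vec\ell,\vc{zy})$ and $\angle(\vec\ell,\vc{zx})$ are both less than $\pi/4$ (the latter because reversing $\ell$ puts the order as $\p ZYX.$).

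The main obstacle I anticipate is that Lemma~\ref{lem:angle} as stated is about a triple with a transversal realizing a specific order, not directly about a single segment $\vc{xy}$ and the line. To get the bound $\angle(\vec\ell,\vc{xy})<\pi/4$ I would either (i) apply Lemma~\ref{lem:angle} to the degenerate ``triple'' where $Y$ plays the role of both middle and last ball, or more cleanly (ii) just reprove the two-ball fact directly: if $\ell$ is a unit-ball transversal meeting $X$ then $Y$, pick $p\in X\cap\ell$, $q\in Y\cap\ell$; then $|xy| \ge 2$ while the component of $\vc{xy}$ perpendicular to $\ell$ has length at most $|xx\s| + |yy\s| \le 1 + 1 = 2 \le |xy|$... this only gives $\angle \le \pi/2$, which is not enough. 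So the honest route is to use the genuine three-ball Lemma~\ref{lem:angle}: apply it to $X,Y,Z$ in order $\p XYZ.$ to get $\angle(\vec\ell,\vc{xz})<\pi/4$, and separately note that $x\s, y\s, z\s$ lie on a line perpendicular to $\ell$'s cross-section... I would instead argue as follows, which I believe is the intended proof: project onto $\ell$ and onto $\ello$ as in the distance-lemma proof, writing $\vc{xy}$, $\vc{xz}$ in terms of the along-$\ell$ displacements $d_1=|x'y'|$, $d_1+d_2 = |x'z'|$ (all positive since the order is $\p XYZ.$) and the perpendicular parts, with $|x\s y\s|\le 2$, $|x\s z\s| \le 2$, $|x\s|, |y\s|, |z\s| \le 1$. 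The condition $\angle(yxz)$ acute is equivalent to $\langle \vc{xy}, \vc{xz}\rangle > 0$, i.e. $d_1(d_1+d_2) + \langle x\s y\s - \text{(shift)}, \dots\rangle > 0$; here the along-$\ell$ term $d_1(d_1+d_2)>0$ and one needs the perpendicular term not to be too negative.

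Concretely, the cleanest finish is: $\angle(yxz)$ is acute iff $|yz|^2 < |xy|^2 + |xz|^2$ (law of cosines). Now $|yz| < |xz|$ would already do it, but that can fail; instead use $|xy| \ge 2$ (non-overlapping) together with a bound $|yz|^2 - |xz|^2 < 4 \le |xy|^2$. Writing $|xz|^2 = (d_1+d_2)^2 + |x\s z\s|^2$ and $|yz|^2 = d_2^2 + |y\s z\s|^2$, we get $|yz|^2 - |xz|^2 = d_2^2 - (d_1+d_2)^2 + |y\s z\s|^2 - |x\s z\s|^2 = -d_1^2 - 2d_1 d_2 + (|y\s z\s|^2 - |x\s z\s|^2)$. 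Since $y\s, x\s$ lie in the closed unit disk around $\ell\s$ in $\ello$, we have $\big| |y\s z\s|^2 - |x\s z\s|^2 \big| = |\,\vc{y\s x\s} \cdot (\vc{z\s y\s} + \vc{z\s x\s})\,| \le |y\s x\s|\,(|z\s y\s| + |z\s x\s|) < 2\cdot(2+2) = 8$, which is too weak. The right move instead: $|y\s z\s|^2 - |x\s z\s|^2 \le |y\s z\s|^2 \le 4$ and drop $-d_1^2 - 2d_1d_2 < 0$, giving $|yz|^2 - |xz|^2 < 4 \le |xy|^2$, hence $|yz|^2 < |xy|^2 + |xz|^2$ and $\angle(yxz)$ is acute. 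By the symmetric argument with the roles of $x$ and $z$ swapped (reversing $\ell$), $|xy|^2 - |xz|^2 < 4 \le |yz|^2$, so $\angle(xzy)$ is acute as well. The only subtle point to verify carefully is that $d_1>0$, i.e. the projections $x', y'$ onto $\ell$ are distinct — this holds because $\ell$ meets $X$ strictly before $Y$ and the balls are non-overlapping, so $|x'y'| = d_1 > 0$, which is all that the strict inequality $-d_1^2 - 2d_1 d_2 < 0$ needs; and $|y\s z\s| \le 2$ follows from $y\s, z\s$ lying in the unit disk. This is the step I'd double-check, but it is routine.
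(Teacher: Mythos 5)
Your final argument is correct, but it reaches the conclusion by a genuinely different route than the paper. The paper's proof is a two-line reduction: shrink the balls (Lemma~\ref{lem:shrinking}) until the transversal is pinned by the triple, then invoke the characterization of pinned triples (Lemma~\ref{lem:tri-tang}), which forces the pinned line to be parallel to $xz$ with $y$ projecting strictly between $x$ and $z$; since the foot of the perpendicular from $y$ to the line $xz$ then lies strictly between $x$ and $z$, both base angles of $\triangle xyz$ are acute, and shrinking does not move the centers. Your argument instead works directly with the original transversal $\ell$: decomposing along $\ell$ and into $\ello$, you get
$|yz|^2-|xz|^2=-(d_1^2+2d_1d_2)+|y\s z\s|^2-|x\s z\s|^2<4\le|xy|^2$,
using $d_1>0$ (which holds exactly as in the paper's proof of the distance lemma, where $d_1,d_2,d_3>0$ is asserted for the same reason), $|y\s z\s|\le 2$ (both centers lie within distance $1$ of $\ell$), and $|xy|\ge 2$; the law of cosines then gives $\angle(yxz)<\pi/2$, and the reversed order handles $\angle(xzy)$. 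This buys you a proof that needs neither the shrinking machinery nor Lemma~\ref{lem:tri-tang} and works verbatim in any dimension, at the cost of a short computation rather than a one-line appeal to structure. Two remarks: your first paragraph's attempt to get $\angle(\vec\ell,\vc{xy})<\pi/4$ from Lemma~\ref{lem:angle} does not go through (as you yourself noticed, the two-ball version only yields $\pi/2$, and the lemma genuinely needs three balls), so that portion should be deleted in favor of the final argument; and the initial reduction to the plane of the centers is unnecessary for your computation, since the along-$\ell$/perpendicular decomposition is dimension-independent.
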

\begin{proof}
  Using Lemma~\ref{lem:shrinking}, we shrink the balls uniformly around
  their centers until a line transversal $\ell$ with order~$\p XYZ.$
  is pinned. By Lemma~\ref{lem:tri-tang}, $\ell$ is parallel to~$xz$
  and $y$ lies inbetween $x$ and $y$ in the projection on~$\ell$,
  implying the claim.
\end{proof}

\begin{lemma}
  \label{lem:xyz,xzy}
  If three non-overlapping unit balls $X, Y$ and $Z$ in $\R^{3}$ admit
  two line transversals with the orders $\p XYZ.$ and $\p XZY.$, then
  the triangle $\triangle xyz$ is acute and $|yz|<2\sqrt{2}$.
\end{lemma}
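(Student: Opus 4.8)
The statement to prove is Lemma~\ref{lem:xyz,xzy}: if three non-overlapping unit balls $X,Y,Z$ admit line transversals in the orders $\p XYZ.$ and $\p XZY.$, then $\triangle xyz$ is acute and $|yz| < 2\sqrt 2$. The natural approach is to extract as much as possible from the two preceding lemmas. First I would apply Lemma~\ref{lem:xyz-angle} to the transversal with order $\p XYZ.$: this gives that $\angle(yxz)$ and $\angle(xzy)$ are acute. Then I would apply Lemma~\ref{lem:xyz-angle} again, but to the transversal with order $\p XZY.$ — here the roles of $Y$ and $Z$ are swapped, so the conclusion becomes that $\angle(zxy)$ and $\angle(xyz)$ are acute. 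The first of these is the same angle as $\angle(yxz)$, but the second, $\angle(xyz)$, is new. Combining, all three angles $\angle(yxz)$, $\angle(xyz)$, $\angle(xzy)$ of $\triangle xyz$ are acute, which is exactly the first half of the claim.

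For the bound $|yz| < 2\sqrt 2$, I would use Lemma~\ref{lem:angle} rather than the pinning lemma. Let $\ell$ be the oriented transversal with order $\p XYZ.$ and $\ell'$ the oriented transversal with order $\p XZY.$. By Lemma~\ref{lem:angle}, $\angle(\vc\ell, \vc{xz}) < \pi/4$ and $\angle(\vc{\ell'}, \vc{xy}) < \pi/4$; also, applying Lemma~\ref{lem:angle} to $\ell$ with the sub-order on $\{Y,Z\}$ (or directly using that $\ell$ meets $Y$ before $Z$) gives $\angle(\vc\ell, \vc{yz}) < \pi/4$, while $\ell'$ meets $Z$ before $Y$, so $\angle(\vc{\ell'}, \vc{yz}) > \pi/2$. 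The idea is to decompose $\vc{yz}$ along and perpendicular to $\vc\ell$: writing $\vc{yz}$ in terms of the axis of the radius-one cylinder around $\ell$, the component of $\vc{yz}$ perpendicular to $\vc\ell$ has length at most $2$ (both $y$ and $z$ lie in the cylinder), and I need to bound the component parallel to $\vc\ell$, i.e. the length $|y'z'|$ of the projection onto $\ell$, by $2$. Here is where $\ell'$ enters: since $\angle(\vc{\ell'}, \vc{yz}) > \pi/2$ and $\angle(\vc\ell, \vc{yz}) < \pi/4$, the angle $\angle(\vc\ell, \vc{\ell'}) > \pi/4$, so the two cylinder axes are well-separated in direction; combining with the fact that both $y,z$ lie in both cylinders, a projection argument onto $\ell'$ should give that $|y'z'|$ (the projection length along $\ell$) is strictly less than $2$. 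Since $|yz|^2 = |y'z'|^2 + |y\s z\s|^2 < 4 + 4 = 8$, this yields $|yz| < 2\sqrt 2$.

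Alternatively — and probably cleaner — I would mimic the computation in the proof of Theorem~\ref{the:main} for $n\ge 7$: the centers $y$ and $z$ both lie in $\C \cap \C'$ where $\C,\C'$ are the radius-one cylinders around $\ell,\ell'$, and $\pi/4 < \angle(\vc\ell,\vc{\ell'}) \le \pi/2$ after possibly reversing orientation (the angle is $<\pi/2$ only if strict, but the case $=\pi/2$ is fine). In the coordinate system of the proof of Lemma~\ref{lem:cylinder6}, the section of $\C_1$ strictly between $-u$ and $u$, where $u = (1/\sin\theta, 0, \dots, 0)$ and $2/\sin\theta < 2\sqrt 2$, together with the two shaded caps of diameter $<2$, must contain both $y$ and $z$; since each cap has diameter less than $2$, if $y$ and $z$ both lay in one cap we'd already be done, and otherwise $|y'z'| \le 2/\sin\theta \cdot \text{(something)}$... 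Actually the simplest honest bound: $y,z \in \C\cap\C'$, so the projections $y',z'$ onto $\ell$ lie in the projection of $\C\cap\C'$ onto $\ell$, and by the argument in Lemma~\ref{lem:cylinder6} that projection, minus the two small caps, has length less than $2\sqrt 2 \le$ the length of the segment $[-u,u]$ which is $2/\sin\theta$; since $\theta > \pi/4$ gives $2/\sin\theta < 2\sqrt 2$, and the caps only help, we get $|y'z'| < 2\sqrt 2$ hence $|yz| < 2\sqrt 2$ after adding the perpendicular part — wait, that overshoots.

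**The main obstacle.** The genuinely delicate point is getting the clean bound $|yz| < 2\sqrt 2$ rather than something weaker like $|yz| < \sqrt{4 + (2\sqrt 2)^2}$. The key observation I expect to need is that the direction $\vc{yz}$ is simultaneously nearly parallel to $\ell$ (angle $<\pi/4$) and nearly perpendicular to $\ell'$ (angle $>\pi/2$). So when I express the constraint "$z$ lies in the cylinder $\C'$ of radius one around $\ell'$" in coordinates where $\vc\ell$ is an axis, the fact that $\vc{yz}$ makes angle $>\pi/2$ with $\vc{\ell'}$ forces the segment $[y,z]$ to cross the "waist" of $\C'$, pinning down its length: the portion of $\C'$ one can traverse while the first coordinate (along $\ell$) only increases has axial extent at most $2/\sin\theta < 2\sqrt 2$, and this bounds $|y'z'|$, not $|yz|$. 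Combined with $|y\s z\s| \le 2$ this still gives only $|yz| < \sqrt 8 + \dots$. The resolution is that $|yz|^2 = |y'z'|^2 + |y\s z\s|^2$ where $|y'z'| < 2\sqrt 2 \cdot$ (a factor coming from the angle), and one must track that the perpendicular component $|y\s z\s|$ and the axial component trade off — precisely, $|y'z'| \ge |yz|\cos(\pi/4)$ forces $|yz| \le \sqrt 2 |y'z'|$, and a sharper look at the cylinder intersection caps shows $|y'z'| < 2$. I would spend the bulk of the write-up making this projection-onto-$\ell'$ estimate rigorous, as everything else follows mechanically from Lemmas~\ref{lem:xyz-angle}, \ref{lem:angle}, and~\ref{lem:cylinder}.
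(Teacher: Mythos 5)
Your first half is exactly the paper's argument: applying Lemma~\ref{lem:xyz-angle} once to each of the two orders shows that all three angles of $\triangle xyz$ are acute. The second half, however, has a genuine gap, and the step it hinges on is false. You assert $\angle(\vec{\ell},\vc{yz})<\pi/4$ ``by applying Lemma~\ref{lem:angle} to the sub-order on $\{Y,Z\}$'', but Lemma~\ref{lem:angle} requires \emph{three} balls and, for the order $\p XYZ.$, only bounds the angle with $\vc{xz}$, the vector joining the two extreme balls. For a pair of balls the correct statement is merely $\angle(\vec{\ell},\vc{yz})<\pi/2$, and this is essentially sharp: if $y$ and $z$ lie on opposite sides of $\ell$ at distance close to $1$ with $|yz|$ close to $2$, the angle is arbitrarily close to $\pi/2$. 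Without that $\pi/4$ bound you cannot deduce $\angle(\vec{\ell},\vc{\ell'})>\pi/4$ from $\angle(\vc{\ell'},\vc{yz})>\pi/2$ --- contrast the proof of Theorem~\ref{the:main}, where the orders involve \emph{four} balls, so that $Y$ and $U$ are the extremes of a genuine triple $\p YZU.$ and Lemma~\ref{lem:angle} does apply. Hence Lemma~\ref{lem:cylinder6} is not available here; and even if it were, it counts $2$-separated points in $\C\cap\C'$ rather than bounding distances, and the axial extent of $\C\cap\C'$ can exceed $2\sqrt2$ (up to $2(1/\sin\theta+\cot\theta)$ in the notation of its proof). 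Your own text acknowledges this (``wait, that overshoots'') and defers the decisive estimate $|y'z'|<2$ to ``a sharper look at the caps'' that is never carried out, so the bound $|yz|<2\sqrt2$ is not proved.

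The paper's route is different and sidesteps all of this. Shrink the balls uniformly about their centers (Lemma~\ref{lem:shrinking}) until one of the two transversals --- without loss of generality the one with order $\p XYZ.$ --- becomes pinned, while the other still exists. By Lemma~\ref{lem:tri-tang} the pinned line $\ell$ is tangent to all three shrunken balls of radius $t\le 1$, lies in the plane of $x,y,z$, and separates $y$ from $x$ and $z$ in that plane; the projections $x',y',z'$ onto $\ell$ appear in this order. If $|yz|\ge 2\sqrt2$, then since the component of $\vc{yz}$ orthogonal to $\ell$ has length $2t\le 2$, one gets $|y'z'|\ge 2\ge 2t$, so a plane orthogonal to $\ell$ between $y'$ and $z'$ separates $Z$ from $X$ and $Y$. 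That forces every transversal to meet $Z$ first or last, contradicting the existence of the $\p XZY.$ transversal. The quantity to control is thus the projection length $|y'z'|$ via a separating plane orthogonal to $\ell$, not the geometry of two intersecting cylinders.
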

\begin{proof}
  Lemma~\ref{lem:xyz-angle} implies that $\triangle xyz$ is acute. To
  prove the last statement, we shrink the balls uniformly around their
  centers.  By Lemma~\ref{lem:shrinking}, there exists $t > 0$ such that
  without loss of generality the following holds: $X$, $Y$, $Z$ are
  now balls with radius~$t \leq 1$ pinning a line transversal~$\ell$
  with order~$\p XYZ.$, and the balls have a second line
  transversal~$\ell'$ with order~$\p XZY.$.  By
  Lemma~\ref{lem:tri-tang}, $\ell$ lies in the plane containing~$x, y,
  z$, and separates $y$ from $x$ and~$z$. Let $x', y', z'$ be the
  projection of $x, y, z$ onto~$\ell$, they appear in this order
  along~$\ell$. If $|yz| \geq 2\sqrt{2}$, then $|y'z'| \geq 2$, and
  there is a plane orthogonal to~$\ell$ that separates $Z$ on one side
  from $X$ and~$Y$ on the other side. But that contradicts the
  existence of~$\ell'$.
\end{proof}

We can now state our restriction on a possible counter-example of
Conjecture~\ref{conj:abcd-acdb}.
\begin{theorem}
  \label{the:double-pinning}
  If four non-overlapping unit balls $\{A, B, C, D\}$ in $\R^3$ pin
  two lines with the geometric permutations $\p ABCD.$ and~$\p ACDB.$,
  then these lines are not pinned by a proper subset of the balls.
\end{theorem}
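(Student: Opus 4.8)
\medskip
\noindent\emph{Proof plan.}
Suppose the statement fails. Orient the two lines so that $\sigma_1$ induces the order $\p ABCD.$ and $\sigma_2$ the order $\p ACDB.$, and suppose one of them — call it $\ell$, and write $\ell'$ for the other — is pinned by a proper subset of $\{A,B,C,D\}$. A line transversal to at most two non-overlapping balls in $\R^3$ is never pinned (already the lines tangent to two fixed balls form a two-dimensional family, so this set of transversals has no isolated point), hence $\ell$ is pinned by a \emph{triple} $T$ of the four balls. Let $XYZ$ be the order in which $\ell$ meets $T$, so that $Y$ is the middle ball of $T$ in this order, and let $W$ be the fourth ball. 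By Lemma~\ref{lem:tri-tang}, $\ell$ is tangent to $X,Y,Z$, their centers $x,y,z$ are coplanar with $\ell$ with $\ell\parallel xz$, and the plane $\Pi$ through $\ell$ orthogonal to the plane $\Sigma$ of these centers strictly separates $Y$ from $X$ and $Z$, meeting each of the three balls only at the respective feet $x',y',z'$ of the perpendiculars from their centers to $\ell$. Fix coordinates with $\Sigma$ the $x_1x_2$-plane, $\ell$ the $x_1$-axis, $\Pi=\{x_2=0\}$, $x=(\xi,1,0)$, $z=(\zeta,1,0)$ with $\xi<\zeta$, and $y=(\eta,-1,0)$ with $\xi<\eta<\zeta$; here $\zeta-\xi=|xz|\ge 2$. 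Finally, $\ell'$ is a transversal of all four balls inducing the other order, and $\ell'\neq\ell$ since the two lines induce different geometric permutations.

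\medskip
The first step is to eliminate $T=\{A,C,D\}$. Among the four triples this is the only one on which $\sigma_1$ and $\sigma_2$ induce the \emph{same} order, namely $\p ACD.$, hence the same middle ball $C$. So $\ell'$ too meets $A,C,D$ with $C$ in the middle; thus $\ell'$ has a point in $A$ and a point in $D$, both in the closed half-space $\{x_2\ge 0\}$, with a point of $C$ strictly between them in $\{x_2\le 0\}$. Since a line not contained in $\Pi$ crosses $\Pi$ exactly once, this forces $\ell'\subseteq\Pi$; but then $\ell'$ meets $A$, $C$, $D$ only at $a',c',d'$, three distinct collinear points of $\ell$, so $\ell'=\ell$ — a contradiction. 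Hence $T\in\{\{A,B,C\},\{A,B,D\},\{B,C,D\}\}$; in particular $B\in T$.

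\medskip
For each of the three remaining triples, $T$ carries two transversals ($\sigma_1$ and $\sigma_2$) whose induced orders, after a suitable orientation, are $\p UVW.$ and $\p UWV.$ for an appropriate labeling; so Lemma~\ref{lem:xyz,xzy} applies and gives that $\triangle xyz$ is acute and that the edge between the two swapped centers has length less than $2\sqrt 2$. In the coordinates above the only non-automatic acuteness is at $y$, namely $(\eta-\xi)(\zeta-\eta)<4$, and the short-edge bound reads: one of the gaps $\zeta-\eta$, $\eta-\xi$ between consecutive projections onto $\ell$ is less than $2$. Further acuteness and short-edge relations come from Lemma~\ref{lem:xyz,xzy} applied to the other non-$\{A,C,D\}$ triples and from Lemma~\ref{lem:xyz-angle} applied to $\{A,C,D\}$, and the distance lemma applied to the full orders yields $|ad|>\max\{|ab|,|bc|,|cd|\}$ from $\p ABCD.$ and $|ab|>\max\{|ac|,|cd|,|bd|\}$ from $\p ACDB.$. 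I would then pin down the orientation of $\ell'$: it meets $X$ and $Z$, whose $x_1$-coordinates differ by at least $2$, in an order that can be read off from its full order; were the monotone coordinate $x_1$ to move ``the wrong way'' along $\ell'$ between these two balls, we would get $\zeta-1\le\xi+1$, hence $|xz|=2$, and then $\ell'$ would meet $X$ and $Z$ only at their unique common point of tangency, which a transversal may not use. So the $x_1$-orientation of $\ell'$ is forced, and combining this with the position of $W$ along $\ell$ imposed by $\ell$'s full order, with the short-edge bound, and with the distance inequalities gives a contradiction. Performing this for the three triples with $\ell=\sigma_1$, and by the analogous argument for the three triples with $\ell=\sigma_2$, completes the proof.

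\medskip
The main obstacle is this last step. Unlike for $T=\{A,C,D\}$, the fourth center $w$ need not lie in $\Sigma$, so the argument is genuinely three-dimensional, and the contradiction has to be squeezed out of the interplay of the forced orientation of $\ell'$, the acuteness relations of Lemmas~\ref{lem:xyz,xzy} and~\ref{lem:xyz-angle}, the distance-at-least-$2$ constraints between the centers, and the distance-lemma inequalities; arranging the six remaining cases so that these estimates remain short is where the real work lies.
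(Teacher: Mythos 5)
There is a genuine gap: the proposal sets up the problem correctly but stops exactly where the proof begins. Your reduction to triples (pairs cannot pin in $\R^3$) and your elimination of $T=\{A,C,D\}$ (both orders restrict to $\p ACD.$, so the separating plane $\Pi$ forces $\ell'=\ell$) are sound and match the corresponding easy case in the paper. But the six remaining cases are only announced as a plan, and you yourself concede that ``the real work lies'' in squeezing out the contradiction there. That remaining work is essentially the entire proof. In the paper, four of the six cases are dispatched quickly --- $\{B,C,D\}$ via a bisecting-plane/separation argument, and the triples where $\ell$ is parallel to the ``wrong'' outer pair ($\{A,B,C\}$ pinning $\sigma_2$, $\{A,B,D\}$ pinning $\sigma_1$) via the angle clash between Lemma~\ref{lem:xyz,xzy} (which forces $\angle(\vc{cb},\vc{\sigma_2})>\pi/4$, resp.\ $\angle(\vc{bd},\vc{\sigma_1})>\pi/4$) and Lemma~\ref{lem:angle}. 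The two hard cases ($\{A,D,B\}$ pinning $\sigma_2$, and $\{A,B,C\}$ pinning $\sigma_1$) each require a genuinely quantitative argument: in the first, $c$ is confined to the intersection of $\cone(b,\vc{ba},\pi/4)$ (from Lemma~\ref{lem:angle}) with $\cone(a,\vc{ab},\arcsin(2/|ab|))$, a triangle $\triangle abp$ shown by a law-of-sines computation to lie entirely within distance $2$ of $a$ or $b$; in the second, $d$ is confined to a slab of the unit cylinder around $\sigma_1$, shown by an explicit trigonometric estimate to lie within distance $2$ of $b$ or $c$. Nothing in your sketch produces these feasible-region computations.

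Two more specific concerns about whether your listed toolkit would even suffice. First, your ingredient list (Lemmas~\ref{lem:xyz-angle}, \ref{lem:xyz,xzy}, the distance lemma, and the ``forced orientation of $\ell'$'') omits Lemma~\ref{lem:angle}, which is the decisive tool in the paper for both hard cases: it is what converts the order information of the second transversal into a $\pi/4$-cone constraint on the fourth center. The distance-lemma inequalities you propose to combine (e.g.\ $|ad|>\max\{|ab|,|bc|,|cd|\}$ and $|ab|>\max\{|ac|,|cd|,|bd|\}$) are order-of-magnitude statements that do not by themselves exclude the hard configurations; one really needs the metric bounds $|pr|<2$ coming from the cone and cylinder intersections. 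Second, your ``forced $x_1$-orientation of $\ell'$'' observation is true but weak --- it only recovers a sign, not the quantitative confinement needed. So as written the proposal is an outline of the case division, not a proof.
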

\begin{proof}
  Consider four non-overlapping unit balls $\{A,B,C,D\}$ in $\R^3$
  that pin two lines $\sigma_1$ and $\sigma_2$ realizing,
  respectively, the geometric permutations $\p ABCD.$ and~$\p ACDB.$.
  Let us first remark that
  \begin{equation}\label{eq:abs1}
    \pi/4 < \angle(\vc{ab}, \vc{\sigma_1}) < \pi/2
  \end{equation}
  Indeed, since $\sigma_1$ meets $A$ before $B$, we have
  $\angle(\vc{ab}, \vc{\sigma_1}) < \pi/2$. Moreover, if
  $\angle(\vc{ab}, \vc{\sigma_1}) \leq \pi/4$, since
  Lemma~\ref{lem:angle} yields that $\angle(\vc{\sigma_1}, \vc{bd})<
  \pi/4$, we would have $\angle(\vc{ab},\vc{bd}) \leq
  \angle(\vc{ab},\vc{\sigma_1}) + \angle(\vc{\sigma_1}, \vc{bd}) <
  \pi/2$, a contradiction with $\triangle abd$ being acute by
  Lemma~\ref{lem:xyz,xzy}. Let us also remark that
  \begin{equation}\label{eq:ab}
    |ab| < 2\sqrt{2}
  \end{equation}
  as otherwise any line meeting~$A$ before~$B$ would make an angle less
  than~$\pi/4$ with~$\vc{ab}$, contradicting
  Equation~\eqref{eq:abs1}.

  \begin{figure}[ht]
    \centerline{\includegraphics{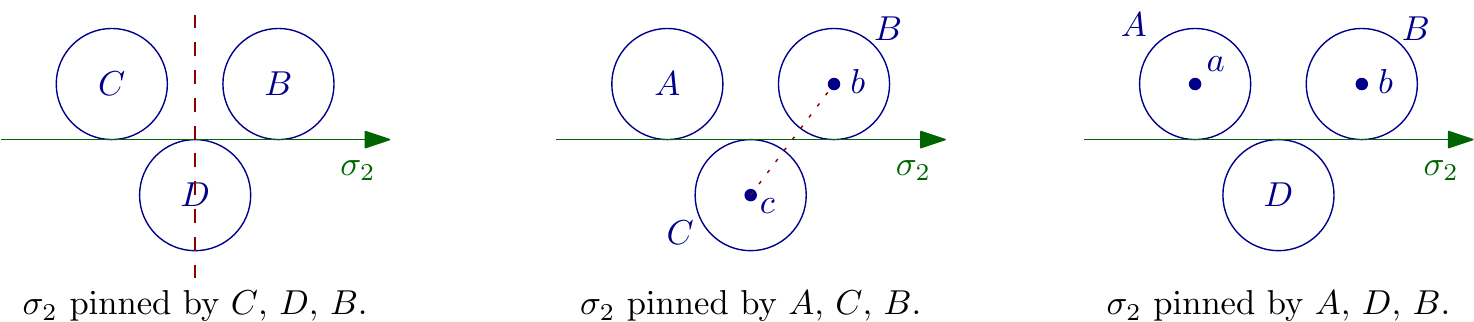}}
    \caption{Proof of Theorem~\ref{the:double-pinning}}
    \label{fig:double-pinning}
  \end{figure}

  \medskip

  Let us assume for a contradiction that~$\sigma_2$ is pinned by three
  of the four balls. We first remark that~$\sigma_2$ can only be
  pinned by $A$, $D$, $B$ (refer to Fig.~\ref{fig:double-pinning}):
  \begin{denseitems}
  \item If $\sigma_2$ is pinned by $C$, $D$, and~$B$, then by
    Lemma~\ref{lem:tri-tang}, $\sigma_2$ lies in the plane of $bcd$
    and is parallel to~$cb$.  Since $\sigma_2$ meets~$A$ before~$C$,
    the bisecting plane of $C$ and~$B$ separates $B$ from $A$ and $C$,
    a contradiction to the existence of~$\sigma_1$.

  \item If $\sigma_2$ is pinned by $A$, $C$, $D$, then it is the only
    line meeting the three balls in this order.  But then $\sigma_2 =
    \sigma_1$, a contradiction.

  \item If $\sigma_2$ is pinned by $A$, $C$, $B$, then by
    Lemma~\ref{lem:tri-tang} it lies in the plane of $acb$ and is
    parallel to~$ab$. Lemma~\ref{lem:xyz,xzy} implies that $|bc| <
    2\sqrt{2}$ and so $\angle(\vc{cb}, \vc{\sigma_2}) > \pi/4$.
    Lemma~\ref{lem:angle} yields, however, that $\angle(\vc{cb},
    \vc{\sigma_2}) < \pi/4$, a contradiction.
  \end{denseitems}
  So assume that $\sigma_2$ is pinned by $A$, $D$, and $B$. By
  Lemma~\ref{lem:tri-tang}, $\sigma_2$ lies in the plane of $adb$ and
  is parallel to~$\vc{ab}$. Thus, $\angle{(\vc{ab},\vc{cb})} =
  \angle{(\vc{\sigma_2},\vc{cb})} < \pi/4$ by Lemma~\ref{lem:angle}.
  It follows that $c$ is contained in $\cone(b,\vc{ba},\pi/4)$, where
  $\cone(u, \vec{v}, \alpha)$ denotes the cone of all points~$p$ such
  that $\angle(\vc{up}, \vec{v}) \leq \alpha$.  Since $\sigma_1$
  intersects $C$ after $A$ and~$B$, the center~$c$ must lie in the
  cone with apex~$a$ spanned by the ball of radius~$2$ and center~$b$.
  With $\gamma = \arcsin(2/ab)$, this cone is $\cone(a, \vc{ab},
  \gamma)$. By Equation~\eqref{eq:ab}, $2 \leq |ab|<2\sqrt{2}$, and so
  $\pi/4 < \gamma \leq \pi/2$.  Let $p$ be a point in the plane
  containing $a$, $b$ and $c$ and such that $\angle(bap) = \gamma$ and
  $\angle(abp) = \pi/4$ (see Fig.~\ref{fig:ADB}). Notice that $c$ lies
  in the triangle~$\triangle abp$. We claim that any point in this
  triangle is at distance less than $2$ from $a$ or $b$. In particular,
  there is no way to place $c$ so as to make $A$, $B$, $C$
  non-overlapping unit balls and $\sigma_2$ cannot be pinned by $A$,
  $D$, and $B$, or, more generally, by three of the balls.

  \begin{figure}[ht]
    \centerline{\includegraphics{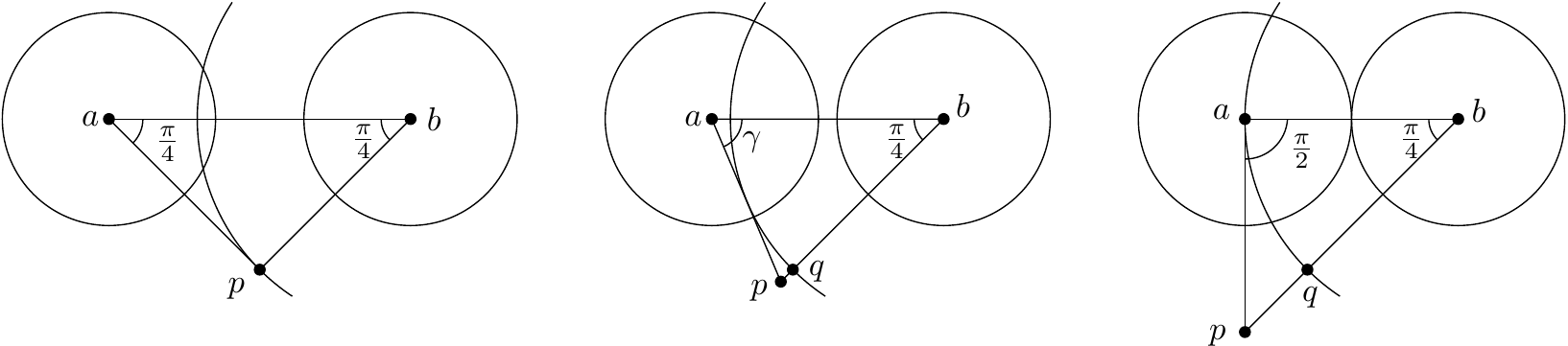}}
    \caption{$\triangle abp$ for three values of~$\gamma$.}
    \label{fig:ADB}
  \end{figure}

  It remains to prove the claim on~$\triangle abp$. Since $\gamma >
  \pi/4$ we have $|bp| > 2$.  Let $q$ be the point on the segment $bp$
  with~$|bq| = 2$.  Since the segment~$ap$ touches the circle of
  radius~$2$ around~$b$, we have $\angle(aqp) > \pi/2$, and so $|aq| <
  |ap|$. We claim that $|ap| < 2$. Indeed, the law of sines gives
  $|ab|/\sin(3\pi/4-\gamma) = |ap|/\sin(\pi/4)$, and so
  \[
  |ap| = \frac{\sin(\pi/4)}{\sin(\pi/4 + \gamma)} |ab|
  = \frac{\frac 12 \sqrt{2}}{\sin{(\pi/4+\gamma)}}
  \frac{2}{\sin{\gamma}}
  = \frac{\sqrt{2}}{\sin(\pi/4+\gamma)\sin \gamma}
  = \frac{2}{\sin \gamma(\sin \gamma + \cos \gamma)}
  \]
  Define $f(x) = \sin x(\sin x + \cos x) = \sin^{2}x + \frac 12 \sin
  2x$.  Since $f'(x) = \sin 2x + \cos 2x$, the function~$f$ is
  (strictly) increasing from $x = 0$ to $x = 3\pi/8$, and (strictly)
  decreasing from $x = 3\pi/8$ to $x = 7\pi/8$. Since $f(\pi/4) =
  f(\pi/2) = 1$, it follows that $f(x) > 1$ for $\pi/4 < x < \pi/2$;
  this proves our claim that $|ap| < 2$. Now, if a point~$u \in \triangle
  apb$ lies to the left of the vertical line through~$q$, then it is
  at distance less than~$2$ from~$a$, if $u$ lies to the right of~$q$
  then it has distance less than~$2$ from~$b$.

  \medskip

  It follows that $\sigma_2$ cannot be pinned by any three of the
  balls, and is tangent to all four. We now assume, for a
  contradiction, that $\sigma_1$ is pinned by three of the balls.
  Again, we easily dismiss three of the cases:
  \begin{denseitems}
  \item If $\sigma_1$ is pinned by $B$, $C$, and~$D$, then by
    Lemma~\ref{lem:tri-tang} $\sigma_1$ lies in the plane of $bcd$ and
    is parallel to~$bd$.  Since $\sigma_1$ meets~$A$ before~$B$, the
    bisecting plane of $B$ and~$D$ separates $D$ from $A$ and $B$, a
    contradiction to the existence of~$\sigma_2$.

  \item If $\sigma_1$ is pinned by $A$, $C$, $D$, then $\sigma_2 =
    \sigma_1$, a contradiction.

  \item If $\sigma_1$ is pinned by $A$, $B$, $D$, then it lies in the
    plane of $abd$ and is parallel to~$ad$.  Since $|bd| < 2\sqrt{2}$
    by Lemma~\ref{lem:xyz,xzy}, we have $\angle(\vc{bd},
    \vc{\sigma_1}) > \pi/4$, contradicting Lemma~\ref{lem:angle}.
  \end{denseitems}
  \begin{figure}[ht]
    \centerline{\includegraphics{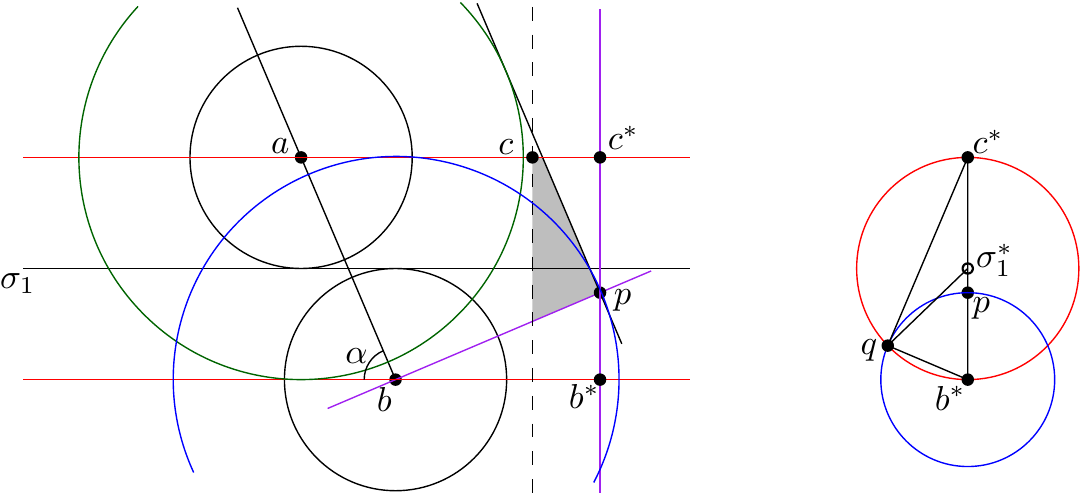}}
    \caption{When $\sigma_1$ is pinned by $A$, $B$, $C$}
    \label{fig:ABC}
  \end{figure}
  So $\sigma_1$ must be pinned by $A$, $B$, and~$C$ and lie in the
  plane spanned by~$abc$, see Fig.~\ref{fig:ABC}. Since $\sigma_1$ is
  a transversal, the center~$d$ of ball~$D$ must lie inside the
  cylinder~$\C$ of radius~one with axis~$\sigma_1$.  Since $\sigma_2$
  meets $A$, $D$, and $B$ in this order, $d$ must also lie in the
  cylinder of radius $2$ with axis~$ab$. Since $\angle(abd) < \pi/2$
  by Lemma~\ref{lem:xyz,xzy}, $d$ lies above the plane orthogonal
  to~$ab$ through~$b$, and since $\sigma_1$ meets $D$ after $C$, $d$
  lies to the right of the plane orthogonal to $\sigma_1$ through~$c$.
  In the projection on the $abc$-plane, this restricts $d$ to the
  shaded area in Fig.~\ref{fig:ABC}.  Let~$p$ be the rightmost point
  of this feasible region for~$d$, that is, the point in the
  $abc$-plane such that $|bp| = 2$ and $\angle(abp) = \pi/2$. For any
  point $u$, let $\sigmao_1(u)$ be the plane orthogonal to $\sigma_1$
  passing through~$u$. The center~$d$ lies in the cylinder~$\C$
  between the planes~$\sigmao_1(c)$ and~$\sigmao_1(p)$.

  We will now show that any point~$d$ in this cylinder has distance
  less than two from $b$ or~$c$, and so $D$ cannot be non-overlapping
  with $B$ and~$C$, a contradiction.  Clearly it suffices to show this
  for the disk~$S$ of radius one around~$\sigma_1$ in the
  plane~$\sigmao_1(p)$.  Let $q$ be a point on the boundary of~$S$
  with $|bq|=2$ (see right side of Fig.~\ref{fig:ABC}). It suffices to
  show that $|cq| < 2$. Let $\alpha = \angle(\vc{ab},
  \vc{\sigma_1})$.  By Equation~\eqref{eq:abs1},
  $\pi/4<\alpha<\pi/2$.  Since
  $|qb\s|=|pb\s|=2\sin{(\pi/2-\alpha)}=2\cos{\alpha}$ and $\angle(b\s
  \!qc\s)=\pi/2$, $|qc\s|^2=4-4\cos^2{\alpha}= 4\sin^2{\alpha}$.  We
  have $|cc\s| =|ac\s| -|ac|= |ab|\cos{\alpha}+|bb^{\s}|-|ac|$, and
  with $|ab|=2/\sin{\alpha}$, $|bb\s| = 2\cos(\pi/2-\alpha) =
  2\sin\alpha$, and $|ac|\geq 2$ this implies $|cc\s| \leq
  2\cot{\alpha} + 2\sin{\alpha} - 2$.  Thus $|cq|^2 = |qc\s|^2 +
  |cc\s|^2 \leq 4 \sin^{2}{\alpha} + (2\cot{\alpha} + 2\sin{\alpha}
  -2)^2$.

  We have
  \begin{align}
    \sin^{2}{x} + \big(\cot {x} + \sin{x} - 1 \big)^2 - 1
    & =\big(\cot{x}+\sin{x}-1\big)^2-\cos^{2} x \nonumber\\
    & = \big(\cot{x}+\sin{x}+\cos{x}-1 \big)
    \big(\cot{x}+\sin{x}-\cos{x}-1 \big) \nonumber \\
    & = \big(\cot{x}+\sin{x}+\cos{x}-1 \big)
    (\cot x - 1)(1 - \sin x).
    \label{eq:abc1}
  \end{align}
  On the interval $\pi/4 <x< \pi/2$, we have $\sqrt{2}/2 < \sin x <
  1$, $\sqrt{2}/2 > \cos x > 0$, and $1 > \cot x > 0$, implying $\cot
  x -1 < 0$ and $1 - \sin x > 0$.  Furthermore
  \[
  \sin{x}+\cos{x}
  =\sqrt{2}\sin{(\frac{\pi}{4}+x)} > 1 \qquad \text{for}\; \pi/4 < x < \pi/2,
  \]
  and so the first term in Equation~(\ref{eq:abc1}) is positive. It
  follows that $\sin^{2} x + (\cot x + \sin x - 1)^{2} < 1$.  This
  implies $|cq|^2 < 4$, and we arrived at the contradiction for this
  final case.
\end{proof}

\subsection{Minimal pinnings by four balls}

By Lemma~\ref{lem:shrink-conj} and Theorem~\ref{the:double-pinning},
if Conjecture~\ref{conj:abcd-acdb} is false then there exist four
non-overlapping unit balls with two transversals with the orders~$\p
ABCD.$ and~$\p ACDB.$ that are both pinned by the four balls but no
three of them. We now analyze the geometry of such \emph{minimal
  pinnings} by four balls (Theorem~\ref{the:min4pinning}) and derive a
statement equivalent to Conjecture~\ref{conj:abcd-acdb} but more
restrictive (Conjecture~\ref{conj:double-pinning}).
\begin{figure}[ht]
  \centerline{\includegraphics{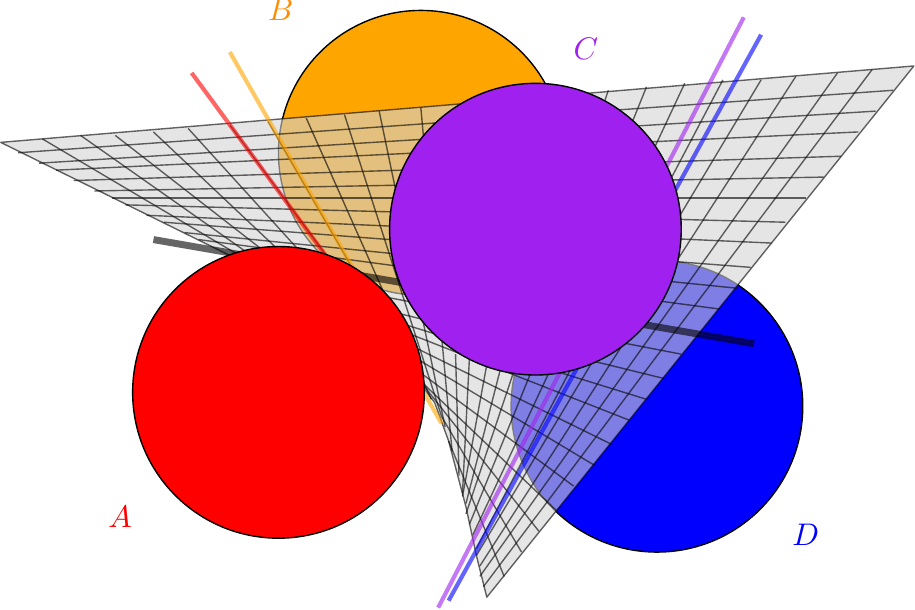}}
  \caption{An alternating hyperboloidal configuration\label{fig:althyp}}
\end{figure}

If $X$ is a ball tangent to a line $\ell$, the
\emph{ridge}~$r_\ell(X)$ of $X$ with respect to $\ell$ is the line
tangent to $X$ and perpendicular to $\ell$ in $X \cap \ell$. We say
that four or more lines are in \emph{hyperboloidal configuration} if
they are all contained in the same family of rulings of a hyperbolic
paraboloid or a hyperboloid of one sheet (see~\cite{hcv} for a
classical discussion of such configurations). An \emph{alternating
  hyperboloidal configuration} is a pair $(\F,\ell)$ where $\F$ is a
family of four unit balls balls and $\ell$ is a line tangent to every
member of $\F$ satisfying the two following conditions:
\begin{denseitems}
\item[(i)] the ridges $\{r_\ell(X) \mid X \in \F\}$ are in
  hyperboloidal configuration, witnessed by a hyperbolic
  paraboloid~$\mathcal{H}$,
\item[(ii)] the normals to $\mathcal{H}$ at its tangency point with
  the balls of $\F$, directed towards the center of that ball and
  ordered along $\ell$, point to alternating sides of $\mathcal{H}$.
\end{denseitems}
Since in an alternating hyperboloidal configuration the ridges are all
perpendicular to $\ell$, and therefore parallel to a common plane, the
quadric they span can only be a hyperbolic paraboloid. Condition~(i)
then forces the line~$\ell$ to intersect~$\mathcal{H}$ in at least
four points and therefore $\ell \subset \mathcal{H}$. This in turn
implies that every ball $X \in \F$ is tangent to $\mathcal{H}$ in $X
\cap \ell$ as the tangent plane to both $X$ and $\mathcal{H}$ in $X
\cap \ell$ contains the two lines (but $X$ does not need to intersect
$\mathcal{H}$ in a single point).  That and the fact that a hyperbolic
paraboloid~ separates~$\R^3$ into two connected components makes
condition~(ii) well-defined.

\begin{theorem}
  \label{the:min4pinning}
  If a family $\F$ of $4$ non-overlapping unit balls in $\R^3$
  minimally pins a line $\ell$ then $(\F,\ell)$ is an alternating
  hyperboloidal configuration.
\end{theorem}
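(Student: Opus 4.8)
The plan is to translate the pinning condition into differential-geometric language via the ridges, and then recognize that ``pinned by four balls but no three'' is precisely the statement that the four ridges are pairwise skew lines all met by $\ell$, forcing them onto a ruled quadric. First I would invoke Lemma~\ref{lem:tri-tang} in reverse: since no three of the four balls pin $\ell$, no three centers are coplanar with $\ell$ with $\ell$ separating the middle one, so in particular $\ell$ is not contained in any plane of three centers; combined with minimal pinning this shows $\ell$ is tangent to all four balls (an interior intersection with any ball could be removed by a small perturbation, as in the argument of Lemma~\ref{lem:tri-tang}). Hence each ball $X_i$ has a well-defined ridge $r_i = r_\ell(X_i)$, a line perpendicular to $\ell$ meeting it at the tangency point $p_i = X_i \cap \ell$, with the center $x_i$ lying on $r_i$ at distance one from $\ell$.

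Next I would characterize when a perturbation of $\ell$ remains a transversal. A line $\ell'$ near $\ell$ stays tangent-or-secant to $X_i$ iff, to first order, its displacement does not increase the distance to $x_i$; writing the first-order motion of $\ell$ as a pair (translation of the foot $p_i$ along $\ell$ is irrelevant, so) a translation $v_i \in \ell^\perp$ at $p_i$ together with an infinitesimal rotation, the condition is a single linear inequality whose boundary is ``the motion is tangent to the ridge direction'', i.e. the constraint hyperplane in the $4$-dimensional space of line-germs is determined by $r_i$. So $\ell$ is pinned iff the four halfspaces (one per ball) have only $0$ in their intersection, and it is minimally pinned iff no three of them already do — equivalently, the four bounding hyperplanes are in ``general position with a twist'': each triple of ridges fails to constrain a line through all of them, but the quadruple does. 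This is the classical Grassmann--Plücker picture: four lines $r_1,\dots,r_4$ all meeting $\ell$, pairwise skew (skewness is exactly the ``no three pin'' condition here, since if $r_i \parallel r_j$ or they met, two balls would already over-constrain), have a regulus of common transversals, so the $r_i$ lie on a unique ruled quadric $\mathcal H$ containing $\ell$ as a line of the complementary ruling; since the $r_i$ are all perpendicular to $\ell$ and hence parallel to a common plane, $\mathcal H$ is a hyperbolic paraboloid. This establishes condition~(i).

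Finally, for condition~(ii): the tangent plane to both $X_i$ and $\mathcal H$ at $p_i$ is spanned by $\ell$ and $r_i$, so the inward normal to $\mathcal H$ at $p_i$ is, up to sign, the direction from $p_i$ to $x_i$. I would show these alternate sides of $\mathcal H$ along $\ell$ by a sign argument: the pinning condition says the four linear functionals (the halfspace normals) positively span the dual space, which for four functionals in a $4$-space with the minimality (no three do) forces them into two opposite ``pairs'' in cyclic order — concretely, ordering the $p_i$ along $\ell$, the sign of the $x_i$--side of $\mathcal H$ must change at each consecutive pair, else three consecutive balls would already pin $\ell$ (one can see this by a local convexity argument in the plane transverse to $\ell$, mirroring the separation argument of Lemma~\ref{lem:tri-tang}). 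I expect the main obstacle to be making this last sign/alternation argument fully rigorous: it requires carefully setting up the linear-algebraic model of infinitesimal line motions (four real parameters: two for an $\ell^\perp$-translation of the foot, two for a rotation of the direction), verifying that each ball contributes exactly the functional $u \mapsto \langle u, \text{(displacement toward } x_i)\rangle$ whose kernel is the ridge constraint, and then extracting ``alternating sides'' from ``the four functionals positively span, minimally.'' The reduction to a ruled quadric in step three is standard projective geometry once skewness is in hand, and the tangency and ridge setup in steps one and two is routine given Lemma~\ref{lem:tri-tang}.
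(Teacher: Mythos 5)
Your overall strategy is the same as the paper's: linearize the pinning condition in the four-dimensional space of line perturbations, observe that each tangent ball contributes a halfspace whose bounding hyperplane consists of the lines meeting its ridge, and deduce first that the ridges lie on a ruled quadric (necessarily a hyperbolic paraboloid since they are all perpendicular to $\ell$) and then that the centers alternate sides. However, the step that carries condition~(i) is wrong as you state it. You claim that four pairwise skew lines all meeting $\ell$ ``have a regulus of common transversals, so the $r_i$ lie on a unique ruled quadric.'' Four pairwise skew lines in general position have exactly two (possibly complex or coincident) common transversals, not a one-parameter family; only \emph{three} pairwise skew lines always lie on a common regulus, and a fourth skew line generically misses that quadric. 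What actually forces the fourth ridge onto the quadric of the other three is the pinning condition itself: the four constraint hyperplanes pass through the origin of the space of line perturbations, and since the four halfspaces intersect with empty interior their normals are linearly \emph{dependent}; if moreover no three are dependent, the four hyperplanes meet in a full line, i.e.\ a one-parameter family of common transversals to the four ridges, which is what sweeps out the quadric. Skewness is neither the right hypothesis nor a consequence of ``no three balls pin'': the degenerate cases you must exclude are two ridges coinciding, three ridges concurrent on $\ell$, and three ridges coplanar with $\ell$. The coplanar case in particular is not touched by your skewness remark and needs a separate argument (if three constraint normals are dependent, all four ridges end up in one plane through $\ell$, and then either three balls already pin $\ell$ or the four together do not --- contradicting minimality either way).

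The alternation part (condition~(ii)) is also only gestured at, as you acknowledge. The missing argument is a perturbation one: project the four screens onto a plane transverse to $\ell$; the circular order of the projected ridges matches the order of tangency because the tangent plane of the quadric rotates monotonically by $\pi$ along $\ell$; and if two \emph{consecutive} screens lay on the same side of the quadric, one could tilt the projection plane so that the four projected screens overlap with nonempty interior, contradicting the degeneracy of the four halfspaces. Your ``positively spanning functionals'' heuristic points in this direction but does not by itself rule out the non-alternating sign patterns. Your opening observation that minimality forces $\ell$ to be tangent to all four balls is correct (a ball met in its interior imposes no local constraint, so the other three would pin $\ell$), though your phrasing of why is garbled.
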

\begin{proof}
  Let $\sigma$ be a line minimally pinned by $\p ABCD.$ in that order.
  We assume that $\sigma$ coincides with the $z$-axis. The center of
  ball $X$ is denoted by $x$ and its contact point with $\sigma$ is
  denoted by $x'$. We parameterize the space of lines by $\R^4$ using
  the coordinates of the intersections with the planes $z=0$ and
  $z=1$: the parameters $(u_1,u_2,u_3,u_4)$ corresponds to the line
  through $(u_1,u_2,0)$ and $(u_3,u_4,1)$. The lines lying in a plane
  with constant $z$ are not represented, but this will not be an
  issue. The point $(0,0,0,0)$ corresponds to~$\sigma$.

  To every ball $X \in \{A,B,C,D\}$ we associate the \emph{screen}
  $S_\sigma(X)$ that is the intersection of the closed halfspace
  bounded by the tangent plane to $X$ in $x'$ that contains $X$, and
  the plane perpendicular to $\sigma$ in $x'$. The screen
  $S_\sigma(X)$ is a halfplane that lies in a plane perpendicular to
  $\sigma$ and is bounded, in that plane, by the ridge~$r_\sigma(X)$.
  Now, the line transversals to $S_\sigma(X)$ form, in our $\R^4$, a
  halfspace $H(X)$ bounded by a hyperplane through the
  origin~\cite[pp.~4--5]{aronov2011lines}. We let $n(X)$ denote the outer
  normal of $H(X)$ and observe that the boundary of $H(X)$ is the set
  of lines intersecting the ridge $r_\sigma(X)$.

  Now let $I=H(A) \cap H(B) \cap H(C) \cap H(D)$. A necessary
  condition for the balls $\{A,B,C,D\}$ to pin $\sigma$ is that $I$
  has empty interior~\cite[Lemma~9]{cheong2012lower}. This implies
  that the family of normals $\{n(A), n(B), n(C), n(D)\}$ is linearly
  dependant (since, in $\R^4$, four halfspaces with linearly
  independent normals intersect with non-empty interior). It could be
  that two, three or all four vectors are \emph{minimally} linearly
  dependent. Geometric interpretations of these situations were given
  in~\cite[Lemma~15]{aronov2011lines}:

  \begin{denseitems}
  \item If two normals are dependent then the two corresponding ridges
    are equal. This cannot happen for non-overlapping balls.
  \item If three normals are dependent, then it must be that the three
    corresponding ridges are either coplanar with or concurrent on
    $\sigma$. Concurrency is again ruled out for non-overlapping
    balls, and we rule out coplanarity in the next paragraph.
  \item If no three normals are dependent then the four ridges
    are in hyperboloidal configuration (the other case with concurrent
    ridges can again not occur in our situation).
  \end{denseitems}

  Let us observe that the case where three normals are linearly
  dependent cannot correspond to a minimal pinning of $\sigma$ by the
  four balls. As mentioned, the three corresponding ridges must lie,
  together with $\sigma$, in some plane $\Pi$. Let us denote them
  $r_1$, $r_2$ and $r_3$ in the order in which $\sigma$ meets them,
  and let $X_i$ be the ball corresponding to~$r_i$. Since a triple of
  balls does not suffice to pin~$\sigma$, $\Pi$ does not separate
  $X_2$ from $X_1$ and~$X_3$; by symmetry we can assume that either
  all three balls are on the same side of $\Pi$ or $\Pi$ separates
  $X_1$ and $X_2$ from $X_3$.  Since $I$ has empty interior, the
  fourth ridge also lies in the plane~$\Pi$. Then, either three of the
  balls pin~$\sigma$ or all four do not, a contradiction.

  We must therefore be in the situation where the four ridges are in
  hyperboloidal configuration. Observe that since any three normals
  are linearly independent, the intersection $I$ must be exactly a
  line in~$\R^{4}$.  That line $I \subset \R^4$ corresponds to the set
  of lines intersecting all four ridges. Let $Q$ denote the quadric
  formed by the union (in $\R^3$) of these line transversals to the
  four ridges. Let us orient $Q$, that is choose an outward normal
  (defined continuously over all of $Q$).  There are two connected
  components in $\R^3\setminus Q$, which we call \emph{sides}
  of~$Q$. As we move a point $p$ along~$\sigma$, the outward normal
  of~$Q$ in~$p$ keeps pointing into the same side but rotates
  continuously around $\sigma$; as $p$ ranges over all of $\sigma$,
  that normal turns by a total angle of~$\pi$.  For each ball $X \in
  \{A,B,C,D\}$, $S(X)\setminus r(X)$ is contained either in the
  positive side or in the negative side of~$Q$.

  Now consider the orthogonal projection of the four screens on the
  plane $z=0$. The circular order in which the projections of the
  ridges appear matches the order in which $\sigma$ meets the screens;
  indeed, the projection of ridge $r(X)$ is simply the trace in $z=0$
  of the plane tangent to~$Q$ in $x'$, and we observed that the
  tangent plane turns continuously, and by a total angle of~$\pi$, as
  the contact point ranges over all of~$\sigma$.  Moreover, (the
  relative interiors of) any two consecutive screens are contained in
  opposite sides of~$Q$ as otherwise we can perturb the plane $z=0$ so
  that the projections of the four screens intersect with non-empty
  interior, and $I$ cannot have empty interior. Altogether, this
  proves that a quadruple of balls minimally pinning a line must
  form an alternating hyperboloidal configuration.
\end{proof}

Theorem~\ref{the:min4pinning}, Lemma~\ref{lem:shrink-conj}, and
Theorem~\ref{the:double-pinning} imply that
Conjecture~\ref{conj:abcd-acdb} can be reformulated in the following
form:
\begin{conjecture}
  \label{conj:double-pinning}
  There is no set $\F$ of four non-overlapping unit balls in~$\R^3$
  with two line transversals $\sigma_1$, $\sigma_2$ that realize the
  geometric permutations $\p ABCD.$ and $\p ACDB.$ and such that
  $(\F,\sigma_1)$ and $(\F,\sigma_2)$ are alternating hyperboloidal
  configurations.
\end{conjecture}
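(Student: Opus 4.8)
The plan is to combine Theorem~\ref{the:min4pinning}, Lemma~\ref{lem:shrink-conj}, and Theorem~\ref{the:double-pinning} to pin down the geometry of a hypothetical counterexample tightly enough that Conjecture~\ref{conj:double-pinning} reduces to the infeasibility of an explicit semialgebraic system, and then to derive that infeasibility by playing the distance lemma against the alternation condition. So suppose $\F=\{A,B,C,D\}$ has transversals $\sigma_1$ and $\sigma_2$ realizing the geometric permutations $\p ABCD.$ and $\p ACDB.$, with $(\F,\sigma_1)$ and $(\F,\sigma_2)$ alternating hyperboloidal configurations witnessed by hyperbolic paraboloids $\mathcal H_1$ and $\mathcal H_2$. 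The key structural fact is that each ball $X$ is tangent to \emph{both} the line $\sigma_i$ and the paraboloid $\mathcal H_i$ at the \emph{same} point $p_X^{(i)}=X\cap\sigma_i$; since $X$ has radius one, its center satisfies
\[
x \;=\; p_X^{(1)}+\eps_X^{(1)}\,\nu_1\!\big(p_X^{(1)}\big)\;=\;p_X^{(2)}+\eps_X^{(2)}\,\nu_2\!\big(p_X^{(2)}\big),
\]
where $\nu_i(p)$ is the unit normal to $\mathcal H_i$ at $p$ and $\eps_X^{(i)}\in\{-1,+1\}$ records the side of $\mathcal H_i$ on which $x$ lies. The alternation condition is exactly that, listed in the transversal order, these signs are $(\eps_A^{(1)},\eps_B^{(1)},\eps_C^{(1)},\eps_D^{(1)})=(+,-,+,-)$ along $\sigma_1$ and $(\eps_A^{(2)},\eps_C^{(2)},\eps_D^{(2)},\eps_B^{(2)})=(+,-,+,-)$ along $\sigma_2$, up to a global flip.

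To make this concrete, I would first normalize the $\sigma_1$--side: place $\sigma_1$ on the $z$-axis. Each ridge $r_{\sigma_1}(X)$ is perpendicular to $\sigma_1$ and meets it at $p_X^{(1)}$, since $\sigma_1$ and the ridges belong to opposite rulings of $\mathcal H_1$; a short computation then shows that the tangent of the direction angle of the ridge at height $h$ is a fractional-linear function of $h$, and that after a rotation about the $z$-axis and a translation along it one may take $\mathcal H_1=\{x_2=\kappa x_1x_3\}$ with a single parameter $\kappa>0$. Writing $h_X$ for the height of $p_X^{(1)}$ on the $z$-axis, the order $\p ABCD.$ forces $h_A<h_B<h_C<h_D$, and the identity above becomes
\[
x \;=\; (0,0,h_X)+\frac{\eps_X^{(1)}}{\sqrt{1+\kappa^2h_X^{2}}}\,(-\kappa h_X,\,1,\,0),
\]
so the $z$-coordinates of $a,b,c,d$ are exactly $h_A<h_B<h_C<h_D$, and the six non-overlapping conditions read $|h_X-h_Y|^2+|\nu_1(h_X)\pm\nu_1(h_Y)|^2\ge4$, the sign being fixed by whether $X$ and $Y$ lie on the same side of $\mathcal H_1$ or not.

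Next I would do the analogous thing for $\sigma_2$ --- now with no normalizing freedom left --- obtaining a second expression for each of $a,b,c,d$ in terms of the line $\sigma_2$, a fractional-linear ridge law for $\mathcal H_2$, four contact heights on $\sigma_2$, and the alternating sign pattern above. Equating the two expressions for each center couples the two parameter blocks, and Conjecture~\ref{conj:double-pinning} becomes the emptiness of this system together with the six inequalities $|xy|\ge2$; one may further feed in the consequences of Lemmas~\ref{lem:angle}, \ref{lem:xyz-angle}, and~\ref{lem:xyz,xzy} already exploited in the proof of Theorem~\ref{the:double-pinning}, such as $\angle(\vc{\sigma_1},\vc{\sigma_2})<\pi/2$ and the acuteness of $\triangle abc$ and $\triangle abd$. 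The leverage I would try to use is the tension between (a) the distance lemma, which forces the strong ordering $|ad|>|ab|>\max\{|ac|,|cd|,|bd|\}$ together with $|ad|>|bc|$, and (b) the alternation, which makes the $\sigma_1$--consecutive pairs $\{A,B\}$, $\{B,C\}$, $\{C,D\}$ and the $\sigma_2$--consecutive pairs $\{A,C\}$, $\{C,D\}$, $\{D,B\}$ lie on opposite sides of the respective paraboloid while tangent to it --- a ``closeness'' condition, since for such a pair the unit normals $\nu_i(\cdot)$ are added with nearly opposite directions and so cannot help the separation $|xy|\ge2$. Equivalently, all four centers lie on the degree-$4$ curve where the two unit cylinders with axes $\sigma_1$ and $\sigma_2$ (as in Lemma~\ref{lem:cylinder6}) meet along their boundaries, and the assertion is that this curve cannot carry four points realizing both alternating patterns with pairwise distance at least two; one could instead attempt to finish by parametrizing that curve followed by a direct, if tedious, case analysis.

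The step I expect to be the main obstacle is the coupling itself. Unlike in Theorem~\ref{the:double-pinning}, where one could reason about each line in isolation because a third ball could always be discarded, here both lines are \emph{minimally} pinned by all four balls, so no ball may be dropped and the elimination does not break into lower-dimensional subproblems. A secondary difficulty is that, if the conjecture holds, it is presumably tight in a degenerate limit --- the alternating hyperboloidal picture of Fig.~\ref{fig:althyp} suggests the extremal configurations are flat or nearly flat, with some triangle collapsing --- so any successful argument must be sharp enough to exclude the boundary of the feasible region and not merely its interior. For these reasons we stop at the sharper reformulation Conjecture~\ref{conj:double-pinning} and leave its resolution open.
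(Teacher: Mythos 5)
This statement is a conjecture that the paper leaves open: there is no proof of it anywhere in the paper, only the derivation (via Lemma~\ref{lem:shrink-conj}, Theorem~\ref{the:double-pinning}, and Theorem~\ref{the:min4pinning}) that it is equivalent to Conjecture~\ref{conj:abcd-acdb}, plus a semialgebraic reformulation in Appendix~\ref{sec:semialgebraic} that the authors could not resolve with computer algebra. Your proposal correctly recognizes this and takes essentially the same route the paper does --- normalizing one pinned line and its witnessing hyperbolic paraboloid, parametrizing the four tangency points along the axis, imposing the alternating sign pattern and the non-overlap inequalities, and coupling the two configurations --- which is, up to the choice of coordinates ($x_2=\kappa x_1x_3$ versus the paper's $xy=-hz$ with the rational parametrization in $t_w$), the ``pinning conditions'' system of Appendix~\ref{sec:semialgebraic}; your added observations (the four centers lie on the quartic curve $\partial\C_1\cap\partial\C_2$, and the distance lemma forces $|ad|>|ab|>\max\{|ac|,|cd|,|bd|\}$) are correct and potentially useful constraints, but, as you acknowledge, neither you nor the paper turns this setup into a proof.
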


\section{Concluding remarks}

Conjectures~\ref{conj:abcd-acdb} and~\ref{conj:double-pinning} can be
expressed by asking whether a system of low-degree polynomial
equations and inequalities in a small number of variables has a
solution (see Appendix~\ref{sec:semialgebraic}).  In principle, such
systems can be solved by computer algebra software based on Gr\"obner
basis computation such as the \textsc{raglib} maple library.
Inequalities are harder to handle than equalities by these solvers, so
this is one reason why Theorem~\ref{the:min4pinning},
Lemma~\ref{lem:shrink-conj}, and Theorem~\ref{the:double-pinning} are
interesting: Conjecture~\ref{conj:double-pinning} replaces most of the
inequalities in Conjecture~\ref{conj:abcd-acdb} by equalities.

Our attempts using algebraic solver software were inconclusive.  These
questions may constitute interesting challenges for the computer
algebra community.

\medskip

We speculate that Theorem~\ref{the:min4pinning} can be turned into an
equivalence, at least when the balls are disjoint. One approach could
be to remark that if $(\F,\sigma)$ is an alternating hyperboloidal
configuration where $\sigma$ realizes the geometric permutation $\p
ABCD.$, then the direction $\vec{u}$ of $\sigma$ is on the boundary of
the so-called \emph{cone of directions} of transversals to the triples
$\p ABC.$ and $\p BCD.$ (using ideas such as
eg.~\cite[Proposition~3]{borcea2008line}). These cones are strictly
convex in $\vec{u}$, are mutually tangent in $\vec{u}$ (their common
supporting great circle is the set of directions of transversals to
the four ridges) and the alternating property of the configuration
$(\F,\sigma)$ ensures that this tangency is \emph{external}.  Spelling
out this outline requires non-trivial technical developments, all the
more if one cares for the setting of non-overlapping balls, and is not
needed for our main result of Section~\ref{sec:conjecture}; we thus
leave it to the interested reader to check the validity of this
approach.

\section*{Acknowledgments.}

The experiments with the computer algebra software leading to the
polynomial systems given in Appendix~\ref{sec:semialgebraic} were
conducted in collaboration with Guillaume Moroz, to whom the authors
are very grateful.


\newpage
\appendix

\section{Correctness of the pinning method}
\label{sec:holmsen}

We start with a lemma in two dimensions:
\begin{lemma}
  \label{lem:two-not-pinned}
  Let $\F$ be a family of non-overlapping, but not necessarily
  congruent disks in the plane. If $\F$ admits two distinct line
  transversals with the same order, then there is a transversal with
  the same order that intersects the interior of every disk.
\end{lemma}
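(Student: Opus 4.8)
The plan is to parametrise directed lines, rephrase everything as a one‑dimensional statement about intervals on a circle, and exploit the order condition through a connectedness argument.

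First, we reduce to the case where the disks $D_1,\dots,D_n$ are pairwise disjoint (if two touch, delete the common point; this changes neither the admissible transversals nor the interiors), orient both given transversals so that each realises the common linear order $D_1\prec\dots\prec D_n$, and write $D_i=B(c_i,r_i)$. Parametrise a directed line by $(\theta,\rho)$, where $v(\theta)=(\cos\theta,\sin\theta)$ is its direction and $\rho$ its signed distance from the origin, so the line is $\{x:\langle x,v^{\perp}(\theta)\rangle=\rho\}$ with $v^{\perp}(\theta)=(-\sin\theta,\cos\theta)$. This line meets $D_i$ iff $\rho\in S_i(\theta):=[\langle c_i,v^{\perp}(\theta)\rangle-r_i,\ \langle c_i,v^{\perp}(\theta)\rangle+r_i]$, meets $\operatorname{int}D_i$ iff $\rho\in\operatorname{int}S_i(\theta)$, and — since for disjoint disks the projection of $c_i$ to the line is the midpoint of the chord the line cuts in $D_i$, and disjoint chords are ordered by their midpoints — when it meets all the disks it realises the order $D_1\prec\dots\prec D_n$ iff $\theta\in I:=\{\theta:\langle c_1,v(\theta)\rangle<\dots<\langle c_n,v(\theta)\rangle\}$; the set $I$ is the set of directions of vectors in an open convex cone, hence an open arc. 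By the one‑dimensional Helly theorem, $\bigcap_iS_i(\theta)\ne\emptyset$ iff every two of the intervals $S_i(\theta)$ meet, that is iff $h_{ij}(\theta):=r_i+r_j-\bigl|\langle c_i-c_j,v^{\perp}(\theta)\rangle\bigr|\ge0$ for all $i<j$. Let $J$ be the set of $\theta\in I$ with $h_{ij}(\theta)\ge0$ for all $i<j$; then $J$ is exactly the set of directions of transversals realising our order, and $J\ne\emptyset$ by hypothesis. It suffices to find $\theta\in J$ with all $h_{ij}(\theta)>0$: then $\bigcap_i\operatorname{int}S_i(\theta)\ne\emptyset$, and the line $(\theta,\rho)$ for any such $\rho$ is the desired transversal.

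The key step is that $J$ is connected. Fix $i<j$. Since $D_i,D_j$ are disjoint, $|c_i-c_j|>r_i+r_j$, and $B_{ij}:=\{\theta:h_{ij}(\theta)<0\}$ is a union of two open arcs, each of length less than $\pi$ and each containing the direction $\theta^{\dagger}$ at which it is centred, where $\theta^{\dagger}$ is characterised by $v^{\perp}(\theta^{\dagger})$ being parallel to $c_i-c_j$, i.e. by $v(\theta^{\dagger})\perp(c_i-c_j)$. On $I$ we have $\langle c_{k+1}-c_k,v(\theta)\rangle>0$ for all $k$, hence $\langle c_j-c_i,v(\theta)\rangle>0$ on $I$, so $I$ is contained in the open half‑circle $\{\theta:\langle c_j-c_i,v(\theta)\rangle>0\}$, on whose boundary both centres $\theta^{\dagger}$ lie; consequently $\theta^{\dagger}\notin I$ and no arc of $B_{ij}$ is contained in $I$. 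Now take $\theta_0,\theta_1\in J$ and let $J_{01}$ be the sub‑arc of the arc $I$ joining them. If some $\theta\in J_{01}$ had $h_{ij}(\theta)<0$, it would lie in one of the two arcs of $B_{ij}$; since $\theta_0,\theta_1\notin B_{ij}$, that arc is connected and misses $\theta_0,\theta_1$, so it lies in the component of $I\setminus\{\theta_0,\theta_1\}$ containing $\theta$, hence in $I$ — contradiction. Thus $h_{ij}\ge0$ on $J_{01}$ for every $i<j$, so $J_{01}\subseteq J$ and $J$ is connected.

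Finally, the two given transversals provide distinct points $(\theta_0,\rho_0)\ne(\theta_1,\rho_1)$ with $\theta_0,\theta_1\in J$. If $\theta_0=\theta_1$, then $\rho_0\ne\rho_1$ lie in $\bigcap_iS_i(\theta_0)$, which is therefore a non‑degenerate interval, so all $h_{ij}(\theta_0)>0$ and we are done. If $\theta_0\ne\theta_1$, then $J$ contains the non‑degenerate arc $J_{01}$; the function $\theta\mapsto\min_{i<j}h_{ij}(\theta)$ cannot vanish throughout $J_{01}$, since otherwise some single $h_{ij}$ would vanish on a sub‑arc of positive length, forcing the non‑constant trigonometric polynomial $\langle c_i-c_j,v^{\perp}(\theta)\rangle^2-(r_i+r_j)^2$ to vanish on an interval — impossible. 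Hence all $h_{ij}(\theta)>0$ for some $\theta\in J_{01}\subseteq I$, and any $\rho\in\bigcap_i\operatorname{int}S_i(\theta)$ gives a transversal with the prescribed order meeting the interior of every disk. I expect the real obstacle to be the connectedness of $J$: without the order condition, $\{\theta:\text{all }h_{ij}(\theta)\ge0\}$ need not be connected, and it is precisely the constraint $\theta\in I$ — which fixes the sign of each $\langle c_i-c_j,v(\theta)\rangle$ — that forces every forbidden arc $B_{ij}$ to touch $I$ only from outside rather than cut across it; one must also watch the degenerate cases (a direction with $\bigcap_iS_i(\theta)$ a single point, touching disks, or $I$ a single point), which are all absorbed into the case analysis.
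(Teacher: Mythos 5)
Your proof is correct, but it takes a genuinely different route from the paper's. The paper argues in a single ``slice'': it fixes one of the two transversals $\ell$, projects the disks along $\ell$ onto $\ell^{\perp}$, and notes that if the resulting intervals overlap in more than a point one can simply translate $\ell$, while if they meet in a single point then $\ell$ is tangent to several disks and---since the existence of the second transversal rules out pinning---the tangencies on the two sides of $\ell$ cannot alternate, so a small rotation of $\ell$ yields a transversal meeting every interior. You instead work in the space of all directed lines: for each direction $\theta$ in the open ``order cone'' $I$ you encode transversality via one-dimensional Helly through the pairwise conditions $h_{ij}(\theta)\geq 0$, show that each forbidden arc $B_{ij}$ is centred at a direction orthogonal to $c_j-c_i$ and hence cannot lie inside $I$, conclude that the entire sub-arc of $I$ between the two given directions is feasible, and finish by noting that finitely many non-constant trigonometric constraints cannot all be tight on a non-degenerate arc (with the parallel-transversal case handled separately). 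The paper's route buys extreme brevity at the cost of invoking the alternation/pinning fact for tangent disks; yours is longer but fully self-contained and makes the cone-of-directions structure explicit, in the spirit of the ideas alluded to in the paper's concluding remarks. Two small points to tidy: for originally touching disks your inequality $|c_i-c_j|>r_i+r_j$ degenerates to an equality, but then $B_{ij}=\emptyset$ (indeed $h_{ij}>0$ on all of $I$, since $\theta\in I$ forces $\langle c_j-c_i,v(\theta)\rangle>0$), so the argument is unaffected; and the step ``some single $h_{ij}$ vanishes on a sub-arc of positive length'' deserves the one-line pigeonhole remark that a non-degenerate arc covered by finitely many closed zero-sets must contain a sub-arc of one of them.
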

\begin{proof}
  Consider one transversal~$\ell$ of the two.  The disks project
  along~$\ell$ onto its orthogonal complement~$\ello$ as
  intervals. Since $\ell$ is a transversal, this intersection is not
  empty. If the intersection is an interval, we are done.  If it is a
  point, then $\ell$ is tangent to some of the disks.  The tangent
  disks cannot alternate, as then $\ell$ would be pinned, and no
  second transversal could exist.  It follows that the tangencies on
  the left strictly follow the tangencies on the right, or vice versa,
  and we can slightly rotate~$\ell$ to obtain the desired transversal.
\end{proof}

We now closely follow Holmsen et al.~\cite{holmsen2003helly}.  Let
$H(z)$ be the plane parallel to the $xy$-plane at height~$z$.  For two
non-overlapping unit balls~$A$, $B$ in~$\R^{3}$ and any $z \in \R$,
let $\K(AB,z)$ be the set of angles~$\theta$ such that there is a
directed transversal meeting $A$ before~$B$, lying in $H(z)$, and
making angle~$\theta$ with the positive $x$-axis.
\begin{lemma}
  \label{lem:holmsen}
  Given two non-overlapping unit balls~$A$ and~$B$ in~$\R^3$ with
  $\theta_1 \in \K(AB,z_1)$ and $\theta_2 \in \K(AB, z_2)$.  Then there
  is an $\eps > 0$ such that the interval $[\theta_0 - \eps, \theta_0
    + \eps]\subset \K(AB, z_0)$, where $z_0 = (z_1 + z_2)/2$ and
  $\theta_0$ bisects the smaller angle between $\theta_1$
  and~$\theta_2$.
\end{lemma}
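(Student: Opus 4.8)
The plan is to project everything onto horizontal planes and reduce the statement to a one-variable inequality. For a height $z$, the slices $A_z := A \cap H(z)$ and $B_z := B \cap H(z)$ are disks whenever non-empty: writing $\hat a$, $\hat b$ for the orthogonal projections of the centers $a$, $b$ onto the $xy$-plane, the disk $A_z$ has center $\hat a$ and radius $r_A(z) = \sqrt{1-(z-a_3)^2}$, and similarly for $B_z$. Since a directed line contained in $H(z)$ meets $A$ before $B$ exactly when it meets $A_z$ before $B_z$, the set $\K(AB,z)$ depends only on the planar pair $(A_z,B_z)$. Choosing coordinates so that $\hat b-\hat a$ points along the positive $x$-axis, and writing $\kappa = |\hat a\hat b|$ and $\rho(z) = r_A(z)+r_B(z)$, a short computation shows that a directed line of direction $(\cos\theta,\sin\theta)$ meets both disks iff the distance $\kappa|\sin\theta|$ between the two parallel lines of that direction through $\hat a$ and through $\hat b$ is at most $\rho(z)$, and that it then meets $A_z$ before $B_z$ iff moreover $\cos\theta > 0$ (using that $A_z$ and $B_z$ are non-overlapping). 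Hence
\[
  \theta \in \K(AB,z) \iff \cos\theta > 0 \ \text{ and }\ \kappa|\sin\theta| \le \rho(z),
\]
so $\K(AB,z)$ is a circular arc symmetric about $\theta = 0$, of angular half-width $\arcsin(\min\{\rho(z)/\kappa,\,1\})$; since the balls are non-overlapping, $\kappa \ge \rho(z)$ whenever the slices meet, so this half-width is at most $\pi/2$.

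With this description the lemma reduces to one inequality. The given angles $\theta_1,\theta_2$ lie in $(-\pi/2,\pi/2)$, hence so does their bisector $\theta_0 = (\theta_1+\theta_2)/2$, so $\cos\theta_0 > 0$; consequently $\theta_0 \in \K(AB,z_0)$ provided $\kappa|\sin\theta_0| \le \rho(z_0)$, and $[\theta_0-\eps,\theta_0+\eps]\subset\K(AB,z_0)$ for some $\eps>0$ provided this holds strictly (then $\theta_0$ is interior to the arc $\K(AB,z_0)$). So it remains to prove
\[
  \kappa|\sin\theta_1|\le\rho(z_1)\ \text{ and }\ \kappa|\sin\theta_2|\le\rho(z_2)
  \ \Longrightarrow\ \kappa\,\big|\sin\tfrac{\theta_1+\theta_2}{2}\big| < \rho\big(\tfrac{z_1+z_2}{2}\big).
\]
Strictness here can fail only in the degenerate case $z_1=z_2$, $\theta_1=\theta_2$ with the common line tangent to both balls; this does not occur in the intended application (where two distinct transversals realizing each order are available), and I would dispose of it separately.

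This inequality is the geometric core of the lemma, and establishing it is the main obstacle: it is precisely where roundness of the balls enters. One cannot argue by concavity alone — although $\rho$ is concave, being a sum of the concave functions $z\mapsto\sqrt{1-(z-x_3)^2}$, the square $\rho^2$ is not concave, and the naive estimates $\rho(z_0)\ge\tfrac12(\rho(z_1)+\rho(z_2))\ge\tfrac\kappa2(|\sin\theta_1|+|\sin\theta_2|)$ together with $|\sin\theta_0|\le\tfrac12(|\sin\theta_1|+|\sin\theta_2|)$ (concavity of $\sin$ on $[0,\pi/2]$) both point the same way and leave the comparison undecided.

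I see two routes to it. The first is a midpoint construction: given directed transversals $\ell_1\subset H(z_1)$ and $\ell_2\subset H(z_2)$ of directions $\theta_1,\theta_2$, choose $p_i\in A\cap\ell_i$ and $q_i\in B\cap\ell_i$ with $p_i$ before $q_i$; then $\tfrac12(p_1+p_2)\in A$ and $\tfrac12(q_1+q_2)\in B$ lie in $H(z_0)$, the line through them meets $A$ before $B$, and its direction is a positive combination of $\vec{\ell}_1$ and $\vec{\ell}_2$ — equal to $\theta_0$ as soon as the chord lengths $|p_iq_i|$ can be made to coincide, i.e.\ as soon as the admissible chord-length intervals along $\ell_1$ and $\ell_2$ overlap. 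This is transparent but does not always apply (for instance when both $\ell_i$ are tangent to the balls), so the robust route is the second one: verify directly that the planar region $\{(\theta,z) : \kappa|\sin\theta|\le\rho(z),\ |\theta|<\pi/2\}$ is convex. After the coordinate change that has already straightened $\K(AB,z)$ into an arc, this comes down to a concavity computation for an explicit one-variable function built from $z\mapsto\sqrt{1-(z-x_3)^2}$ — which is exactly the calculation of Holmsen et al.~\cite{holmsen2003helly}, and which I would reproduce, tracking strictness so that $\theta_0$ lands in the interior of $\K(AB,z_0)$.
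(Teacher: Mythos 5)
Your setup and reduction are correct and coincide with the paper's: after slicing by $H(z)$, the set $\K(AB,z)$ is the arc $\{\theta: |\theta|\le G(z)\}$ with $G(z)=\arcsin\bigl(\rho(z)/\kappa\bigr)$ (the paper writes $f(z)=\rho(z)/d$ and $G=\arcsin\circ f$), and the lemma reduces to the strict concavity of $G$, equivalently the convexity of the region $\{(\theta,z): \kappa|\sin\theta|\le\rho(z),\ |\theta|<\pi/2\}$. You also correctly diagnose why the easy arguments fail (concavity of $\rho$ is not enough, and the midpoint-of-chords construction breaks down for tangent transversals), and you handle the degenerate equality case more explicitly than the paper does.

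The gap is that you stop exactly where the proof begins: the strict concavity of $G$ is the entire analytic content of the lemma, and you assert it by reference rather than prove it. Concretely, what is missing is the following. Place $A$ at the origin and $B$ at $(d,0,b)$, so $f(z)=\bigl(R(z)+R(z-b)\bigr)/d$ with $R(t)=\sqrt{1-t^2}$; by the symmetry of $f$ about $z=b/2$ it suffices to show $G''<0$ on $(b/2,1)$. One computes $G''(z)=g(z)\,\bigl(1-f(z)^2\bigr)^{-3/2}$ with $g=f''\,(1-f^2)+f\,(f')^2$, then $g'=f'''\,(1-f^2)+(f')^3$. From $R''<0$ and $R''''<0$ one gets that $f'$ and $f'''$ are strictly decreasing and vanish at $z=b/2$, hence are negative on $(b/2,1)$; therefore $g'<0$ there, and since $g(b/2)=f''(b/2)\bigl(1-f(b/2)^2\bigr)\le 0$ (with equality only when the balls touch, which is exactly your degenerate case), $g<0$ on $(b/2,1)$. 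Note that the sign of $f''$ alone does not settle the sign of $g$ --- the term $f\,(f')^2$ is positive --- so the third and fourth derivatives of $R$ genuinely enter; this is the ``roundness of the balls'' you correctly anticipated but did not cash in. Until this computation (or an equivalent one) is written out, the proposal is an accurate reduction of the lemma, not a proof of it.
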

\begin{proof}
  Since the statement of the lemma is invariant under coordinate
  transformations that keep the normal vector of~$H(z)$ fixed, we can
  assume that $A$ has center~$(0,0,0)$ and $B$ has center~$(d, 0, b)$,
  with $d > 0$ and $0 \leq b \leq 2$.  For $b-1 \leq z \leq 1$ the
  intersections $H(z) \cap A$ and $H(z) \cap B$ are two disks of
  radius~$R(z)$ and~$R(z-b)$, respectively, where
  \[
  R(z) = \sqrt{1 - z^{2}}.
  \]
  A directed line in~$H(z)$ meeting~$A$ before~$B$ makes an
  angle~$\alpha \in (-\pi/2, \pi/2)$ with the positive $x$-axis. Such
  transversals exist for $b-1 \leq z \leq 1$, and so we can assume
  $b-1 \leq z_1 \leq z_0 \leq z_2 \leq 1$.  For a fixed~$z$ with $b-1
  \leq z \leq 1$, transversals with orientation~$\alpha$ exist for
  $-G(z) \leq \alpha \leq G(z)$, where
  \[
  G(z) = \arcsin(f(z)) \qquad\text{and}\qquad
  f(z) = \frac{R(z) + R(z-b)}{d},
  \]
  with one exception: If $A$ and $B$ touch, they do so in~$H(b/2)$. In
  that case $f(b/2) = 1$ and $G(b/2) = \pi/2$, but transversals exist
  only for $-\pi/2 < \alpha < \pi/2$.

  To prove the lemma, it suffices to show that $G(z_0) > (G(z_1) +
  G(z_2))/2$.  We will show that in fact the function~$G(z)$ is
  strictly convex by showing that~$G''(z) < 0$.

  The function $G(z)$ is symmetric about $z = b/2$.  When $A$ and
  $B$ touch, then $G(z)$ is not differentiable in $z = b/2$, but
  in that case $G(b/2) = \pi/2$ and this is clearly the maximum.  It
  therefore suffices to show $G''(z) < 0$ for $b/2 < z < 1$. In this
  range, we have $f(z) < 1$, and
  \[
  f'(z) = \frac{R'(z) + R'(z-b)}{d}, \qquad
  f''(z) = \frac{R''(z) + R''(z-b)}{d}, \qquad
  f'''(z) = \frac{R'''(z) + R'''(z-b)}{d},
  \]
  where
  \[
  R'(z) = \frac{-z}{(1-z^{2})^{1/2}}, \qquad
  R''(z) = \frac{-1}{(1-z^{2})^{3/2}}, \qquad
  R'''(z) = \frac{-3z}{(1-z^{2})^{5/2}}, \qquad
  R''''(z) = \frac{-12z^{2}-3}{(1-z^{2})^{7/2}}.
  \]
  We note that $R''(z) < 0$ and $R''''(z) < 0$ for all $-1 < z < 1$,
  which implies that $R'(z)$ and $R'''(z)$ are strictly decreasing in this
  range. Therefore $f'(z)$ and $f'''(z)$ are strictly decreasing in
  the range~$b/2 < z < 1$.  Since $f'(b/2) = f'''(b/2) = 0$, this
  means that $f'(z) < 0$ and $f'''(z) < 0$ for $b/2 < z < 1$.  We have
  next
  \[
  G'(z) = \frac{f'(z)}{(1-(f(z))^{2})^{1/2}} \qquad \text{and} \qquad
  G''(z) = \frac{g(z)}{(1-(f(z))^{2})^{3/2}},
  \]
  where
  \[
  g(z) = f''(z)(1-(f(z))^{2}) + f(z)(f'(z))^{2}.
  \]
  The sign of $G''(z)$ is determined by $g(z)$, which is well defined
  and differentiable at $z = b/2$ even when $A$ and $B$ touch.  Since
  $f'(b/2) = 0$ and $f''(b/2) < 0$ we have $g(b/2) \leq 0$, with
  equality only if $A$ and~$B$ touch. We have
  \[
  g'(z) = f'''(z)(1 - (f(z))^{2}) + (f'(z))^{3} < 0 \qquad
  \text{for $b/2 < z < 1$},
  \]
  since $f'''(z) < 0$ and $f'(z) < 0$. It follows that $g(z)$ is
  strictly decreasing in the range $b/2 < z < 1$, which implies $g(z)
  < g(b/2) \leq 0$.  Consequently $G''(z) < 0$ for $b/2 < z < 1$,
  completing the proof.
\end{proof}

We now have all the necessary tools.
\begin{proof}[Proof of Lemma~\ref{lem:shrinking}.]
  The lemma is true with $t\s = 0$ if the centers are all collinear,
  so assume this is not the case.  We set $t\s > 0$ to be the infimum
  over all radii~$t$ where $\F(t)$ has a transversal with the given
  order.  It follows from the compactness of the balls that $\F(t\s)$
  has a transversal~$\ell_1$ with the correct order.  Assume for a
  contradiction, that $\F(t\s)$ has a second line transversal~$\ell_2
  \neq \ell_1$ with the same order.  We will argue that then there is
  another transversal~$\ell$ with the same order that intersects the
  interior of every ball in~$\F(t\s)$.  This implies that there is an
  $\eps > 0$ such that $\ell$ is a transversal for the family
  $\F(t\s-\eps)$ as well, a contradiction.

  If $\ell_1$ and $\ell_2$ are parallel, then the entire strip bounded
  by the two lines intersects all balls, and we can choose $\ell$ to
  be any line inbetween.  Assume next that $\ell_1$ and $\ell_2$ are
  not parallel, and choose a coordinate system where they are parallel
  to the $xy$-plane.  Let $\ell_i$, for $i \in \{1,\,2\}$, lie in the
  plane~$H(z_i)$ and make angle~$\theta_i$ with the positive~$x$-axis.
  Let $z_0 = (z_1 + z_2)/2$ and let $\theta_0$ be the angle
  bisecting~$\theta_1$ and~$\theta_2$ as in Lemma~\ref{lem:holmsen}.

  For every pair $1 \leq i < j \leq j$, Lemma~\ref{lem:holmsen}
  guarantees the existence of an~$\eps_{ij} > 0$ such that the
  interval $[\theta_0 - \eps_{ij}, \theta_0 + \eps_{ij}] \subset
  \K(B_{i}(t\s)B_{j}(t\s), z_{0})$.  Setting $\eps = \min_{i < j}
  \eps_{ij}$, we have
  \[
    [\theta_0 - \eps, \theta_0 + \eps] \subset \bigcap_{i < j}
    \K(B_{i}(t\s)B_{j}(t\s), z_{0}).
  \]
  Consider now the family of disks in~$H(z_0)$ obtained as the
  intersection of each ball $B_{i}(t\s)$ with~$H(z_0)$.  For any
  angle~$\theta \in [\theta_0 - \eps, \theta_0 + \eps]$, consider the
  projection of the disks on the orthogonal complement~$\ello$ of a
  line with orientation~$\theta$.  Each disk projects on an interval.
  The intersection of the projections of $B_{i}(t\s)$ and $B_{j}(t\s)$
  is non-empty, since $\theta \in \K(B_{i}(t\s)B_{j}(t\s), z_{0})$.
  By Helly's theorem in one dimension, this implies that the
  intersection of all intervals is not empty.  Therefore there exists
  a line transversal to the disks with orientation~$\theta$, and by
  construction it meets the disks in order.  Since this holds for any
  angle~$\theta$ in this interval, we have a transversal intersecting
  the interior of each disk by Lemma~\ref{lem:two-not-pinned}.
\end{proof}

\section{Semi-algebraic reformulation of
  Conjectures~\ref{conj:abcd-acdb} and~\ref{conj:double-pinning}}
\label{sec:semialgebraic}

Counter-examples to Conjectures~\ref{conj:abcd-acdb}
and~\ref{conj:double-pinning} can be expressed as solutions of sets of
polynomial equalities and inequalities, therefore reducing these
conjectures to the question of the emptiness of a semi-algebraic
set. Various algorithms are known to answer this question and several
implementations are available~\cite{bpr}, so in
principle settling our conjecture is only a matter of computational
resources. The resources needed to solve a given problem can be
greatly influenced by the modeling of the problem. We therefore
believe that although our attempts in this direction failed, there is
value in summarizing our efforts.

\paragraph{Tangency condition.}

Our first formulation yields a semi-algebraic set defined in $\R^{10}$
by four equalities (two of of degree $6$, two of degree $10$) and
twelve quadratic inequalities. It describes the configurations of four
non-overlapping unit balls $\{A, B, C, D\}$ and two common tangents to
these balls in the geometric permutations $\p ABCD.$ and $\p ACDB.$.
It follows from Lemma~\ref{lem:shrink-conj} and
Theorem~\ref{the:double-pinning} that the existence of such a
configuration is equivalent to falsifying
Conjectures~\ref{conj:abcd-acdb} and~\ref{conj:double-pinning}.

Up to translation and symmetry we can assume that $a$ is at the
origin, $b$ is on the $x$-axis and $c$ is on the $xy$-plane. The four
points can thus be described using six variables $x_b , x_c , y_c ,
x_d , y_d , z_d$.  We next argue that parameterizing the directions of
the two lines, rather than the lines themselves, is
sufficient. Indeed, the geometric permutation realized by a line
tangent or transversal to the four balls can be read from its
direction vector $\vec{v}$ alone: that line meets $X$ before $Y$ if
and only if $\vec{v} \cdot \vc{xy} > 0$.  Also, we can assume that the
centers of the balls are not coplanar, since in this case
Conjecture~\ref{conj:abcd-acdb} is known to hold, so Equation~(6) of
Borcea et al.~\cite{Borcea06} allows to retrieve the full description
of the line from its direction and the coordinates of the centers of
the balls.

So let $\vc{v_1}, \vc{v_2}$ denote the direction vectors of two common
tangents in order, respectively, $\p ABCD.$ and $\p ACDB.$.  Since no
line parallel to the $yz$-plane can be a common tangent to the balls
$A$ and $B$, up to scaling we can write the two vectors $\vc{v_1} =
(1,q,r)$ and $\vc{v_2} = (1,s,t)$. The condition that $\vc{v_i}$ is a
direction of a common tangent to the four balls is equivalent to
Equations~(7) and~(8) of Borcea et al.~\cite{Borcea06}; we thus have
for each $\vc{v_i}$ two equalities, one of degree~$6$ with $27$ terms,
the other of degree~$10$ with $195$ terms. We then require that the
balls be disjoint by adding six quadratic inequalities that require
that the squared distance between any two centers is at least $4$. We
finally ensure that the two lines meet the balls in the right order by
six bilinear inequalities that constrain the signs of $\vc{v_1}\cdot
\vc{ab}$, $\vc{v_1}\cdot \vc{bc}$, $\vc{v_1}\cdot \vc{cd}$ and
likewise for $\vc{v_2}$.

\paragraph{Pinning conditions.}

Our second formulation builds on Conjecture~\ref{conj:double-pinning}
and yields a semi-algebraic set defined, essentially, in $\R^{10}$ by
six equalities of degree $4$, six linear inequalities and six
inequalities of degree~$4$.

We first describe an alternating hyperboloidal configuration
$(\F,\sigma)$ using $5$ variables $(h,t_a, t_b, t_c, t_d) \in \R^5$,
six degree-four inequalities and three linear inequalities.
Specifically, we equip $\R^3$ with an orthonormal frame such that
$\sigma$ is the $x$-axis and its minimal pinning by $\F$ is witnessed
by the hyperbolic paraboloid with equation $xy=-hz$; the centers of the
balls are thus on the quadric with equation $xy=hz$.  The position of
ball $W$ is given by the variable $t_w$ that represents the
$x$-coordinate of the tangency point of $W$ and $\sigma$. Assuming the
balls have unit radius, the position of the center is then:
\[ w = \pth{\frac{2ht_w}{1-t_w^2}, \frac{1-t_w^2}{1+t_w^2}, \frac{2t_w}{1+t_w^2}}\]
It remains to require that the balls be non-overlapping (six
inequalities of degree four bounding from below the squared distances
between centers) and to specify the order in which the balls touch the
line $\sigma$ (three linear inequalities ordering the $t_w$'s).

Our semi-algebraic set then describes the existence of two
configurations of the previous type that realize the geometric
permutations $\p ABCD.$ and $\p ACDB.$ and where the tetrahedra of
centers are isometric.\footnote{Two tetrahedra with equal edge-lengths
  are either isometric or one is isometric to a reflection of the
  other. A point satisfying all these constraints may thus not
  correspond to a pair of isometric minimal-pinning
  configurations. Yet, mirroring one of the configurations would give
  a pair of isometric minimal pinnings so as far as we only care about
  existence, this system is fine as~is.} Indeed, if two such
configurations exist then a rigid motion that maps each ball from the
first configuration to the matching ball in the second configuration
will send the $x$-axis of the first configuration to a transversal
realizing $\p ABCD.$ in the second configuration. Conversely, if a
counter-example to Conjectures~\ref{conj:double-pinning} exists then
it gives rise to a pair of configuration as described above.

We build our system by picking two independent sets of variables
$(h,t_a, t_b, t_c, t_d)$ and $(h',t_a', t_b', t_c', t_d')$, collecting
the linear inequalities enforcing the orders $\p ABCD.$ on one
configuration and $\p ACDB.$ on the other, collecting the six
degree-four inqualities enforcing that the first configuration is
non-overlapping (we drop their counterparts in the second
configuration as they are redundant if the configurations are
isometric) and adding the condition $|uv|^2 = |u'v'|^2$ for each of
the six pairs in $\{a,b,c,d\}$. 

We expect the variety defined by these equations (ie. dropping the
inequations) to have dimension~$4$: the lower bound comes from the
system ($10$ variables minus $6$ equations, hoping for transverse
intersection) and the upper bound comes from the geometry (dimension
$5$ or more would imply that in the space of configurations of four
balls with two tangents, the set of configurations where both tangents
are pinned has codimension~$1$ whereas we expect that when deforming
such a configuration, both pinnings need not happen simultaneously).
The system contains a $5$-dimensional degenerate component that
corresponds to $hh' = 0$, as the parameterization then degenerates. We
therefore add one variable $u$ and the equality $u*h*h' = 1$ so that the
existence of a solution in $u$ forces the other term to be
non-zero.\footnote{This trick of enforcing an inequality $I \neq 0$ by
  adding a variable $u$ and an equality $uI=1$ is known as
  ``saturation''.}

\paragraph{Discussion.}

The first system seems a challenge for symbolic methods such as
critical point techniques. Indeed, such methods typically first
operate on the underlying variety, obtained by dropping from the
system all inequalities. This variety therefore corresponds to
configurations of four, possibly intersecting, unit balls with two
common tangents, in any order. That variety appears to be unmixed,
that is, it contains components of different dimensions, and to
contain a singular locus of dimension at least~$2$ (those
configurations where a tangent is pinned). Already an equidimensional
decomposition of this system seems out of reach at the moment. The
second system seems better suited but also seems out of reach at the
moment.
\end{document}